\DeclareMathAlphabet{\mathpzc}{OT1}{pzc}{m}{it}
\colorlet{darkblue}{blue!50!black}
\newtheorem{theorem}{Theorem}[section]
\newtheorem{lemma}[theorem]{Lemma}
\newtheorem{corollary}[theorem]{Corollary}
\newtheorem{definition}[theorem]{Definition}
\newtheorem{remark}[theorem]{Remark}
\let\originalleft\left
\let\originalright\right
\renewcommand{\left}{\mathopen{}\mathclose\bgroup\originalleft}
\renewcommand{\right}{\aftergroup\egroup\originalright}
\renewcommand{\d}{\/\mathrm{d}\/}
\def\w{\textbf{W}^{\varepsilon}_{{\theta}^{\varepsilon}}}
\def\e{\varepsilon}
\def\S{\mathrm{S}}
\def\T{\mathbb{T}}
\def\L{\mathbb{L}}
\def\A{\mathrm{A}}
\def\I{\mathrm{I}}
\def\C{\mathrm{C}}
\def\f{\boldsymbol{f}}
\def\B{\mathrm{B}}
\def\D{\mathrm{D}}
\def\y{\boldsymbol{y}}
\def\x{\boldsymbol{x}}
\def\p{\boldsymbol{p}}
\def\z{\boldsymbol{z}}
\def\v{\boldsymbol{v}}
\def\V{\mathbb{v}}
\def\w{\boldsymbol{w}}
\def\W{\mathrm{W}}
\def\Q{\mathrm{Q}}
\def\N{\mathbb{N}}
\def\V{\mathbb{V}}
\def\wi{\widetilde}
\def\Q{\mathrm{Q}}
\def\u{\mathrm{U}}
\def\u{\boldsymbol{u}}
\def\H{\mathbb{H}}
\newcommand{\R}{\mathbb{R}}
\renewcommand{\d}{\/\mathrm{d}\/}
\numberwithin{equation}{section}
\newcommand{\Addresses}{{% additional braces for segregating \footnotesize
		\footnote{
			%	\footnotesize
			
			\noindent \textsuperscript{1}Center for Mathematics and Applications (NovaMath),  NOVA	SST,	Portugal. \par\nopagebreak
			\noindent \textsuperscript{2}Department of Mathematics, Indian Institute of Technology Roorkee-IIT Roorkee,
			Haridwar Highway, Roorkee, Uttarakhand 247667, India.\par\nopagebreak
			\noindent  \textit{e-mail:} \texttt{Manil T. Mohan: maniltmohan@ma.iitr.ac.in, maniltmohan@gmail.com.}
			
			\textit{e-mail:} \texttt{Kush Kinra: kushkinra@gmail.com, k.kinra@fct.unl.pt.}
			
			\noindent \textsuperscript{*}Corresponding author.

			\textit{Key words:} Convective Brinkman-Forchheimer  equations, backward uniqueness, approximate controllability, Dirichlet's quotient, Lipschitz deviation, Gaussian noise.
			
			Mathematics Subject Classification (2010): Primary 35Q30, 76D05; Secondary 35R60, 37L30, 76B75.

}}}
\begin{document}

		\title[Backward uniqueness of CBF equations]{Backward uniqueness of 2D and 3D  convective Brinkman-Forchheimer  equations and its applications \Addresses}
	\author[K. Kinra and M. T. Mohan]{Kush Kinra\textsuperscript{1} and Manil T. Mohan\textsuperscript{2*}}

	\maketitle
	
	%%to remove the space left for date, use:

	\begin{abstract}
		 In this work, we consider the two- and three-dimensional convective Brinkman-Forchheimer (CBF)  equations  (or damped Navier--Stokes equations) on a torus $\mathbb{T}^d,$ $d\in\{2,3\}$:
	$$	\frac{\partial \boldsymbol{u}}{\partial t}-\mu \Delta\boldsymbol{u}+(\boldsymbol{u}\cdot\nabla)\boldsymbol{u}+\alpha\boldsymbol{u}+\beta|\boldsymbol{u}|^{r-1}\boldsymbol{u}+\nabla p=\boldsymbol{f}, \ \nabla\cdot\boldsymbol{u}=0,$$  where $\mu,\alpha,\beta>0$ and $r\in[1,\infty)$ is the absorption exponent.  For $d=2,r\in[1,\infty)$ and $d=3,r\in[3,\infty)$ ($2\beta\mu\geq 1$ for $d=r=3$), we first show the backward uniqueness of deterministic CBF equations by exploiting  the logarithmic convexity property and the global solvability results available in the literature. As a direct consequence of the backward uniqueness result, 	we first derive the approximate controllability with respect to the initial data  (viewed as a start controller). Secondly, we apply  the backward uniqueness results  in the attractor theory to show the zero Lipschitz deviation of the global  attractors  for 2D and 3D CBF equations. By an application of log-Lipschitz regularity,  we  prove the uniqueness of Lagrangian trajectories in 2D and 3D CBF flows  and the continuity of Lagrangian trajectories with respect to the Eulerian initial data. Finally, we consider the stochastic CBF equations with a linear multiplicative Gaussian noise. For $d=2,r\in[1,\infty)$ and $d=3,r\in[3,5]$ ($2\beta\mu\geq 1$ for $d=r=3$), we show the pathwise backward uniqueness  as well as approximate controllability  via starter controller results. %{\color{red} In particular, the results obtained in this work hold true for 2D Navier--Stokes equations. }
	\end{abstract}

	%	\end{frontmatter}
\section{Introduction}\label{Sec1}\setcounter{equation}{0}
Let $\mathfrak{A}(\cdot)$ be a bounded or unbounded linear operator on a Hilbert space $\mathcal{H}$  and ${\varphi}$ be a solution of the evolution equation $\frac{\d}{\d t}{\varphi}+\mathfrak{A}({\varphi})={0}$ such that $\varphi(T)=0$ for some $T>0$ (\cite{JMG}). We have the \emph{backward uniqueness property} if $\varphi(t)=0$ for all $0\leq t\leq T$. If $\mathfrak{A}$ is bounded from below, then the energy method helps to prove that  $\varphi(t)=0$ for $ t\geq T$. This method generally fails for $0\leq t\leq T$  since the Cauchy problem for $\varphi$ is usually an  ill-posed backward in time problem.  Therefore, backward uniqueness is substantially more difficult than forward uniqueness due to ill-posedness (in general) of the backward evolution problem (\cite{IK}). The backward uniqueness results  for parabolic problems based on logarithmic convexity method have been examined in \cite{SALN,CBLT,LEGA,JMG,IK,JLLBM,HO}. We explore an interesting connection of backward uniqueness and approximate controllability with respect to the initial data  (viewed as a start controller) for two- and three-dimensional convective Brinkman-Forchheimer (CBF)  equations  (or damped Navier--Stokes equations) in this work. Another motivation for examining the backward uniqueness of CBF equations is its amazing connection with the study of attractors and the long-term behavior of infinite-dimensional dynamical systems. Moreover, by using  log-Lipschitz regularity,  we  prove the uniqueness of Lagrangian trajectories in 2D and 3D CBF flows. We cannot solve CBF equations backwards (ill-posed problem), but one can show that the regular solutions enjoy the \emph{backward uniqueness property}. 
\subsection{The model}
The convective Brinkman-Forchheimer (CBF) equations describe the motion of incompressible fluid flows in a saturated porous medium.  We consider the following  CBF equations in  a $d$-dimensional torus $\mathbb{T}^{d}=\big(\R/\mathrm{L}\mathbb{Z}\big)^{d}$ ($d=2,3$):
\begin{equation}\label{1}
	\left\{
	\begin{aligned}
		\frac{\partial \u}{\partial t}-\mu \Delta\u+(\u\cdot\nabla)\u+\alpha\u+\beta|\u|^{r-1}\u+\nabla p&=\f, \ \text{ in } \ \mathbb{T}^{d}\times(0,T], \\ \nabla\cdot\u&=0, \ \text{ in } \ \mathbb{T}^{d}\times(0,T], \\
		\u(0)&=\u_0 \ \text{ in } \ \mathbb{T}^{d},
	\end{aligned}
	\right.
\end{equation}
where $\u(x,t):\mathbb{T}^{d}\times[0,T]\to\R^d$ represents the velocity field at time $t$ and position $x$, $p(x,t):\mathbb{T}^{d}\times[0,T]\to\R$ denotes the pressure field and $\f(x,t):\mathbb{T}^{d}\times[0,T]\to\R^d$ is an external forcing. Moreover, $\u(\cdot,\cdot)$, $p(\cdot,\cdot)$ and $\f(\cdot,\cdot)$ satisfy the following periodicity conditions:
\begin{align}\label{2}
	\u(x+\mathrm{L}e_{i},\cdot) = \u(x,\cdot), \ p(x+\mathrm{L}e_{i},\cdot) = p(x,\cdot) \ \text{ and } \ \f(x+\mathrm{L}e_{i},\cdot) = \f(x,\cdot),
\end{align}
for every $x\in\R^{d}$ and $i=1,\ldots,d,$ where $\{e_{1},\dots,e_{d}\}$ is the canonical basis of $\R^{d}.$ The positive constants $\mu,\alpha$ and $\beta$ denote the \emph{Brinkman coefficient} (effective viscosity), \emph{Darcy} (permeability of the porous medium) and \emph{Forchheimer} (proportional to the porosity of the material) coefficients, respectively. The absorption exponent $r\in[1,\infty)$ and  $r=3$ is known as the \emph{critical exponent}. The critical homogeneous CBF equations  have the same scaling as Navier--Stokes  equations (NSE)  only when $\alpha=0$ (\cite{KWH}).  In the literature,  the case $r<3$ is referred  as \emph{subcritical} and $r>3$ as \emph{supercritical} (or fast growing nonlinearities, \cite{KT2}). For the supercritical case, the diffusion ($-\Delta\u$) and damping ($|\u|^{r-1}\u$)  terms dominate the convective term $(\u\cdot\nabla)\u$, and one can expect global solvability results for the system \eqref{1}  (see \cite{SNA,KWH,KT2,MTT,MT1} and references therein).   Moreover, the system \eqref{1} is also referred as NSE modified by an absorption term (\cite{SNA}). The above model is accurate when the flow velocity is too large for Darcy's law to be valid alone, and apart from that the porosity is not too small (\cite{MTT}).  If one considers \eqref{1} with $\alpha=\beta=0$, then we obtain the classical NSE, which  describe the motion of viscous fluid substances, and if $\alpha, \beta>0$, then it can also be considered as  a damped NSE. 

The authors in \cite{MTT} considered the system \eqref{1} with an extra term $\widetilde{\beta}|\u|^{\widetilde{r}-1}\u$ to model a pumping, when $\widetilde{\beta}<0$ by opposition to the damping modeled through the term $\beta|\u|^{r-1}\u$ when $\beta>0$ (referred as Brinkman-Forchheimer extended Darcy (BFeD) model). For $\beta>0$ and $\widetilde{\beta}\in\R$, the existence of weak  solutions is obtained by  assuming that $r>\widetilde{r}\geq 1$, and the continuous dependence on the data as well as the existence of strong solutions were established for $r>3$. As we are working on the torus $\mathbb{T}^d$  and $\widetilde{r}<r$, by modifying  calculations suitably, the  results of this paper hold true for BFeD model also.

\subsection{Literature survey}
A good number of literature is available for the global solvability results of the system \eqref{1} (for instance, see \cite{SNA,KWH,KWH1,KT2,MTT,MT1}).  The existence of a global weak solution to the system \eqref{1}  is established in \cite{SNA} and its uniqueness (for $d=2$, $r\in[1,\infty)$ and for $d=3$, $r\in[3,\infty)$) in \cite{MTT,MT1}.  The Brinkman-Forchheimer equations with fast growing nonlinearities is considered in \cite{KT2} and the authors established the existence of regular dissipative solutions and global attractors for 3D CBF equations  for $r> 3$. The authors in \cite{CLF,KWH} proved that all weak solutions of the critical 3D CBF equations  satisfy the energy equality in  bounded as well as periodic domains.  For the global solvability and random dynamics of stochastic CBF equations, one may refer to \cite{KKMTM,MT4,MT2}, etc. Likewise 3D NSE, the global solvability of deterministic and stochastic subcritical  3D CBF equations ($2\beta\mu<1$ for critical also) is still an open problem.  

 For ordinary differential equations,  the backward uniqueness is equivalent to the forward uniqueness. Whereas, for stochastic differential equations, the backward uniqueness is closely related to the question of existence of a stochastic flow and the latter implies former (\cite{ZBMN}).  Similar to deterministic partial differential equations (PDEs), in the case of stochastic PDEs, the existence of a flow does not  imply the backward uniqueness. The main applications of backward uniqueness are in the long time behavior of the solutions (cf. \cite{ZBMN,JMG,IK,JCR5}, etc.) and control theory (\cite{VBMR,CFJP,SMEZ}, etc.). The backward uniqueness as well as unique continuation property for various equations, like  NSE, Kuramoto-Sivashinsky equations, nonlinear dissipative Schr\"odinger equation, etc. are established in \cite{JMG}.  By proving the backward uniqueness result, the asymptotic behavior for large times of solutions of linear stochastic PDEs of parabolic type is investigated in \cite{ZBMN}. The backward uniqueness property of the solution to 3D stochastic magnetohydrodynamic-$\alpha$ model driven by a linear multiplicative Gaussian noise is studied  in \cite{RZ}. The authors in \cite{VBMR} proved the backward uniqueness of solutions to stochastic semilinear parabolic equations as well as for the tamed NSE driven by a linear multiplicative Gaussian noise. They have provided applications to the approximate controllability of nonlinear stochastic parabolic equations with initial controllers are given. The backward uniqueness of 3D NSE and their applications have been explored in  \cite{LEGA,LEGA1,JMG,MIIK,RJK,LEP,GSVS,TT,TPT}  and references therein. The authors in  \cite{DHET,MP}  proved the backward uniqueness results for  3D NSE of  compressible flow and primitive equations, respectively. In this work, we establish the backward uniqueness result for deterministic and stochastic 2D and 3D CBF equations and investigate  their applications in control theory and attractor theory.

\subsection{Difficulties, approaches and novelties}
We consider  the CBF equations \eqref{1} in a $d$- dimensional torus only.  In the  torus $\mathbb{T}^d$ as well as on the whole space $\mathbb{R}^d$, the Helmholtz-Hodge projection $\mathcal{P}$ and $-\Delta$ is commute (\cite[Theorem 2.22]{JCR4}). Therefore the equality  (\cite[Lemma 2.1]{KWH})
\begin{align}\label{3}
	&\int_{\mathbb{T}^{d}}(-\Delta \boldsymbol{y}(x))\cdot|\boldsymbol{y}(x)|^{r-1}\boldsymbol{y}(x)\d x\nonumber\\&=\int_{\mathbb{T}^{d}}|\nabla \boldsymbol{y}(x)|^2|\boldsymbol{y}(x)|^{r-1}\d x+4\left[\frac{r-1}{(r+1)^2}\right]\int_{\mathbb{T}^{d}}|\nabla|\boldsymbol{y}(x)|^{\frac{r+1}{2}}|^2\d x,
\end{align}
is quite useful in obtaining the regularity results. The above equality may not be useful in  domains other than the whole space or a $d$-dimensional torus (see \cite{KWH,MT1} for a detailed discussion). For $\x\in\H$ and $\f\in\mathrm{L}^2(0,T;\H)$ (see Section \ref{Sec2} below for functional setting), using the above estimate, one can show that the weak solution $\u\in\C([0,T];\H)\cap\mathrm{L}^2(0,T;\V)\cap\mathrm{L}^{r+1}(0,T;\wi\L^{r+1})$ to the system \eqref{555} (see below) has the regularity $$\u\in\C((0,T];\V)\cap\mathrm{L}^2(\epsilon,T;\D(\A))\cap\mathrm{L}^{r+1}(\epsilon,T;\wi\L^{3(r+1)})$$ for any $\epsilon>0$ (see \eqref{16}-\eqref{18} below). Furthermore, for the supercritical case, if $\frac{\d\f}{\d t}\in\mathrm{L}^2(0,T;\V')$, one can show that $\frac{\d\u}{\d t}\in\mathrm{L}^{\infty}(\epsilon_1,T;\H)\cap\mathrm{L}^2(\epsilon_1,T;\V)$  and if $\frac{\d\f}{\d t}\in\mathrm{L}^2(0,T;\H)$,  then one gets $\frac{\d\u}{\d t}\in\mathrm{L}^{\infty}(\epsilon_1,T;\V)$, for any $0<\epsilon<\epsilon_1$. Note that in the stochastic case, we are unable to obtain the above regularity on the time derivative, so we must restrict ourselves to $r\in[3,5]$ in three dimensions.

We point out here that in the deterministic case, the results obtained in this work hold true for bounded domains as well. For $\x\in\H$ and $\f\in\mathrm{W}^{1,2}(0,T;\H)$ (so that $\f\in\C([0,T];\H)$), the authors in \cite{KT2} established that the weak solution of \eqref{555} with fast growing nonlinearities (see below) has the regularity $\u\in\mathrm{L}^{\infty}(\epsilon,T;\D(\A))$, for any $\epsilon>0$ (\cite[Theorem 4.2]{KT2}, see \eqref{382} below). However, in the case of $\mathbb{T}^d$, we obtained the backward uniqueness result without using this regularity result.

To the best our knowledge, the backward uniqueness results for the system \eqref{1} and its stochastic counterpart are not considered in the literature. The main objectives of this work are to establish the following: 
\begin{enumerate}
	\item [(1)] Backward uniqueness property of the system \eqref{1} by using a logarithmic convexity method and its applications like
	\begin{itemize}
		\item [(a)] approximate controllability with respect to the initial data of the system \eqref{1},
		\item [(b)] zero Lipschitz deviation of the global attractor for the system \eqref{1}.
	\end{itemize}
	\item [(2)] The uniqueness of Lagrangian trajectories in 2D and 3D CBF flows by using the log-Lipschitz regularity.
\item [(3)] Pathwise backward uniqueness property of stochastic CBF equations perturbed by linear  multiplicative Gaussian noise and its application in  approximate controllability. 
\end{enumerate}

We use a logarithmic convexity approach to obtain the backward uniqueness results (cf. \cite{VBMR,JMG,IK}).  In order to prove the backward uniqueness, for $\u=\u_1-\u_2$, several authors have used the ratio  (cf. \cite{VBMR,ZBMN,JMG,IK,JCR5}) $$\Lambda(t)=\frac{\langle\mathcal{A}(t)\u(t),\u(t)\rangle}{\|\u(t)\|_{\H}^2},$$ for all $t\in[0,T]$,  where $\mathcal{A}(\cdot)$ is linear and self-adjoint. But in this work, for the deterministic supercritical case, we consider for all $t\in[0,T]$
\begin{align}\label{13}\Lambda(t)=\frac{\langle\mathcal{A}(\u(t)),\u(t)\rangle}{\|\u(t)\|_{\H}^2},\ \text{ where }\ \mathcal{A}(\u)=\mu\A\u+\alpha\u+\beta[\mathcal{C}(\u_1)-\mathcal{C}(\u_2)],\end{align} which is nonlinear. In the deterministic setting, the monotonicity property of $\mathcal{C}(\cdot)$ (see \eqref{2.23}  below) as well as the regularity of the solutions to the system \eqref{1} (see \eqref{05}, \eqref{18}, \eqref{010}, \eqref{327}, \eqref{011} and \eqref{334}  below) play a crucial role in obtaining the backward uniqueness results for the case $d=2,r\in[1,\infty)$, $d=3,r\in[3,\infty)$ ($2\beta\mu\geq 1$ for $d=r=3$). 

Whereas in the stochastic case, we use the ratio
\begin{align*}
	\widehat{\Lambda}(t)=\frac{\langle\widehat{\mathcal{A}}\v(t),\v(t)\rangle}{\|\v(t)\|_{\H}^2},  \text{ where }\ \widehat{\mathcal{A}}\v:=\mu\A\v+\left(\alpha+\frac{\sigma^2}{2}\right)\v,
	\end{align*}
for the noise intensity $\sigma\in\mathbb{R}\backslash\{0\}$. Note that $\widehat{\mathcal{A}}$ is a self-adjoint operator. By using this ratio, we are able to obtain the backward uniqueness results for the case $d=2,r\in[1,\infty)$ and $d=3,r\in[3,5]$ ($2\beta\mu\geq 1$ for $d=r=3$) only. The regularity estimates  obtained in \eqref{55} and \eqref{56} help us to obtain the desired results. As it is impossible to emulate estimates similar to \eqref{327}, \eqref{011}, and \eqref{334} on the time derivative in the stochastic case, we are unable to use the ratio given in \eqref{13} and consequently obtain the desired results for $d=3$, $r\in(5,\infty)$.

Approximate controllability is a property of dynamical systems which means that the system can be steered from any initial state to an arbitrary but close enough final state by inputting an appropriate control. It is a weaker form of exact controllability, which requires that the system can be steered from any initial state to an arbitrary final state. A direct consequence of the backward uniqueness result is the \emph{approximate controllability with respect to the initial data,  which is viewed as a start controller} (cf. \cite{VBMR,JLL}, etc.).  Following the works \cite{VBMR,CFJP}, we  prove the approximate controllability results for 2D and 3D deterministic as well as  stochastic  CBF equations.

The backward uniqueness of CBF equations has an additional application in the realm of attractor theory. The injectivity of the solution  semigroup $\S(\cdot)$ is an immediate consequence of backward uniqueness property (Lemma \ref{lem3.7}).   From \cite[Theorem 3.5]{KKMTM} (see \cite{KKMTM_RMP} for 2D CBF flows), we know that if the autonomous forcing $\f\in\H$, then the system \eqref{555}  possesses a global attractor $\mathscr{A}$ in $\H$. In fact, for the case $d=2,r\in[1,\infty)$ and $d=3,r\in[3,\infty)$ ($2\beta\mu\geq 1$ for $d=r=3$), using the procedure followed to prove  backward uniqueness, one can establish the following (Theorem  \ref{thm3.5}):
\begin{align}\label{1.5}
	\|\A^{\frac{1}{2}}(\u_1-\u_2)\|_{\H}^2\leq C_0\|\u_1-\u_2\|_{\H}^2\log\left(\frac{M_0^2}{\|\u_1-\u_2\|_{\H}^2}\right),\ \text{ for all }\ \u_1,\u_2\in\mathscr{A},\ \u_1\neq\u_2,
\end{align}
where $M_0\geq 4\sup\limits_{\x\in\H}\|\x\|_{\H}$ and $C_0$ is a constant. Note that the above result can be used to obtain the $1$-log-Lipschitz continuity of $\A : \mathscr{A}\to\H$ (Corollary \ref{cor3.9}).    Using the techniques given in \cite{EPJC,JCR5}, the estimate \eqref{1.5} is used to  prove the zero Lipschitz deviation for 2D and 3D CBF equations  when $\f\in\H$ (Theorem  \ref{thm3.6}). Whether \eqref{1.5} holds without the factor $\log\left(\frac{M_0^2}{\|\u_1-\u_2\|_{\H}^2}\right)$ is an open problem.

The authors in \cite{MDJC,MDJC1}  proved the uniqueness of Lagrangian trajectories for   2D and 3D  NSE (local for 3D case)  in periodic domains  and  bounded domains, respectively. The log-Lipschitz regularity of the velocity field  and H\"older regularity of the flow map of the three-dimensional Navier-Stokes equations  with small data in critical spaces is demonstrated in \cite{HBMC}. Given a smooth solution of 2D and 3D NSE, the authors in \cite{MDJC1} established  the uniqueness of Lagrangian particle trajectories, as well as their continuity with respect to the Eulerian initial data through the abstract results in \cite[Theorems 3.2.1 and 3.2.2]{MDJC1}.  By applying these abstract results, we  prove the uniqueness of Lagrangian trajectories in 2D and 3D CBF flows  and the continuity of Lagrangian trajectories with respect to the Eulerian initial data (Theorem \ref{thm3.11}).

\subsection{Organization of the paper}
The rest of the paper is organized as follows: In the next section, we provide the necessary function spaces and operators needed to obtain main results of this work. The backward uniqueness for 2D and 3D deterministic CBF equations for  $d=2,r\in[1,\infty)$ and $d=3,r\in[3,\infty)$ ($2\beta\mu\geq 1$ for $d=r=3$) is established in Section \ref{Sec3} by using a logarithmic convexity approach (Theorem \ref{thm3.1}). Some applications of the backward uniquness result are discussed in Section \ref{Sec4}. As a  direct consequence of the backward uniqueness result, we establish the approximate controllability with respect to the initial data in Theorem \ref{thm3.3}. As an application in the attractor theory, we show that the semigroup operator $\S(\cdot)$ is injective (Lemma \ref{lem3.7}). Moreover, we establish some results  on boundedness of ``log-Dirichlet quotients" for differences of solutions of the 2D and 3D CBF equations on the global attractor (Theorem \ref{thm3.5} and Corollary \ref{cor3.9}).  This  helps us to prove the zero Lipschitz deviation for the global attractors  (Theorem \ref{thm3.6}). The uniqueness as well as continuity with respect to the Eulerian initial data  of Lagrangian trajectories in 2D and 3D CBF flows is established in Theorem \ref{thm3.11}.   The stochastic CBF equations perturbed by a linear multiplicative Gaussian noise  is considered in Section \ref{Sec5}. By using a suitable transformation, we transform the stochastic CBF equations  into a random dynamical system and then prove the pathwise  backward uniqueness results for stochastic CBF equations for  $d=2,r\in[1,\infty)$ and $d=3,r\in[3,5]$ ($2\beta\mu\geq 1$ for $d=r=3$) in Theorem \ref{thm4.1}.  The approximate controllability result for the stochastic CBF equations is stated in Theorem \ref{thm4.2}. 

	\section{Functional Setting}\label{Sec2}\setcounter{equation}{0}
	In this section, we provide the necessary function spaces needed to obtain the main results of this work. We consider the problem \eqref{1} on a $d$-dimensional torus $\mathbb{T}^{d}=\big(\R/\mathrm{L}\mathbb{Z}\big)^{d}$ $(d=2,3)$, with periodic boundary conditions.% and zero-mean value constraint for the functions, that is, $\int_{\mathbb{T}^{d}}^{}\u(x) \d x =\boldsymbol{0}.$

	\subsection{Function spaces} 
	Let $\C_{\mathrm{p}}^{\infty}(\mathbb{T}^d;\R^d)$ denote the space of all infinitely differentiable  functions $\u$ satisfying periodic boundary conditions $\u(x+\mathrm{L}e_{i},\cdot) = \u(x,\cdot)$, for $x\in \R^d$. \emph{We are not assuming the zero mean condition for the velocity field unlike the case of NSE, since the absorption term $\beta|\u|^{r-1}\u$ does not preserve this property (see \cite{MTT}). Therefore, we cannot use the well-known Poincar\'e inequality and we have to deal with the  full $\H^1$-norm.} The Sobolev space  $\H_{\mathrm{p}}^s(\mathbb{T}^d):=\mathrm{H}_{\mathrm{p}}^s(\mathbb{T}^d;\mathbb{R}^d)$ is the completion of $\C_{\mathrm{p}}^{\infty}(\mathbb{T}^d;\R^d)$  with respect to the $\H^s$-norm and the norm on the space $\H_{\mathrm{p}}^s(\mathbb{T}^d)$ is given by $$\|\u\|_{{\H}^s_{\mathrm{p}}}:=\left(\sum_{0\leq|\boldsymbol\alpha|\leq s}\|\D^{\boldsymbol\alpha}\u\|_{\mathbb{L}^2(\mathbb{T}^d)}^2\right)^{1/2}.$$ 	
	It is known from \cite{JCR_2001} that the Sobolev space of periodic functions $\H_{\mathrm{p}}^s(\mathbb{T}^d)$, for $s\geq0$, can be defined as 
	$$\H_{\mathrm{f}}^s(\mathbb{T}^d)=\left\{\u:\u=\sum_{k\in\mathbb{Z}^d}\u_{k}\mathrm{e}^{2\pi i k\cdot x /  \mathrm{L}},\ \overline{\u}_{k}=\u_{-k}, \  \|\u\|_{{\H}^s_\mathrm{f}}:=\left(\sum_{k\in\mathbb{Z}^d}(1+|k|^{2s})|\u_{k}|^2\right)^{1/2}<\infty\right\}.$$ We infer from \cite[Proposition 5.38]{JCR_2001} that the norms $\|\cdot\|_{{\H}^s_{\mathrm{p}}}$ and $\|\cdot\|_{{\H}^s_f}$ are equivalent. Let us define 
	\begin{align*} 
		\mathcal{V}:=\{\u\in\C_{\mathrm{p}}^{\infty}(\mathbb{T}^d;\R^d):\nabla\cdot\u=0\}.
	\end{align*}
	We define the spaces $\H$ and $\widetilde{\L}^{p}$ as the closure of $\mathcal{V}$ in the Lebesgue spaces $\mathrm{L}^2(\mathbb{T}^d;\R^d)$ and $\mathrm{L}^p(\mathbb{T}^d;\R^d)$ for $p\in(2,\infty)$, respectively. We also define the space $\V$ as the closure of $\mathcal{V}$ in the Sobolev space $\mathrm{H}^1(\mathbb{T}^d;\R^d)$. Then, we characterize the spaces $\H$, $\widetilde{\L}^p$ and $\V$ with the norms  $$\|\u\|_{\H}^2:=\int_{\mathbb{T}^d}|\u(x)|^2\d x,\quad \|\u\|_{\widetilde{\L}^p}^p:=\int_{\mathbb{T}^d}|\u(x)|^p\d x\ \text{ and }\ \|\u\|_{\V}^2:=\int_{\mathbb{T}^d}(|\u(x)|^2+|\nabla\u(x)|^2)\d x,$$ respectively. 
	Let $(\cdot,\cdot)$ denote the inner product in the Hilbert space $\H$ and $\langle \cdot,\cdot\rangle $ represent the induced duality between the spaces $\V$  and its dual $\V'$ as well as $\widetilde{\L}^p$ and its dual $\widetilde{\L}^{p'}$, where $\frac{1}{p}+\frac{1}{p'}=1$. Note that $\H$ can be identified with its own dual $\H'$. From \cite[Subsection 2.1]{FKS}, we have that the sum space $\V'+\widetilde{\L}^{p'}$ is well defined and  is a Banach space with respect to the norm 
	\begin{align*}
		\|\u\|_{\V'+\widetilde{\L}^{p'}}&:=\inf\{\|\u_1\|_{\V'}+\|\u_2\|_{\wi\L^{p'}}:\u=\u_1+\u_2, \u_1\in\V' \ \text{and} \ \u_2\in\wi\L^{p'}\}\nonumber\\&=
		\sup\left\{\frac{|\langle\u_1+\u_2,\f\rangle|}{\|\f\|_{\V\cap\widetilde{\L}^p}}:\boldsymbol{0}\neq\f\in\V\cap\widetilde{\L}^p\right\},
	\end{align*}
	where $\|\cdot\|_{\V\cap\widetilde{\L}^p}:=\max\{\|\cdot\|_{\V}, \|\cdot\|_{\wi\L^p}\}$ is a norm on the Banach space $\V\cap\widetilde{\L}^p$. Also the norm $\max\{\|\u\|_{\V}, \|\u\|_{\wi\L^p}\}$ is equivalent to the norms  $\|\u\|_{\V}+\|\u\|_{\widetilde{\L}^{p}}$ and $\sqrt{\|\u\|_{\V}^2+\|\u\|_{\widetilde{\L}^{p}}^2}$ on the space $\V\cap\widetilde{\L}^p$. Furthermore, we have
	$
	(\V'+\widetilde{\L}^{p'})'=	\V\cap\widetilde{\L}^p \  \text{and} \ (\V\cap\widetilde{\L}^p)'=\V'+\widetilde{\L}^{p'}.
	$
	Moreover, we have the continuous embeddings $\V\cap\widetilde{\L}^p\hookrightarrow\V\hookrightarrow\H\cong\H'\hookrightarrow\V'\hookrightarrow\V'+\widetilde{\L}^{p'},$ where the embedding $\V\hookrightarrow\H$ is compact. 
%	The following interpolation inequality is useful throughout the paper: Let $0\leq s_1\leq s\leq s_2\leq\infty$ and $0\leq\theta\leq1$ be such that $\frac{1}{s}=\frac{\theta}{s_1}+\frac{1-\theta}{s_2}$. Then for $\u\in\L^{s_2}(\mathbb{T}^d)$, we have
%	\begin{align*}
%		\|\u\|_{\L^s}\leq\|\u\|_{\L^{s_1}}^{\theta}\|\u\|_{\L^{s_2}}^{1-\theta}.
%	\end{align*} 
	
	\subsection{Linear operator}\label{linope}
	Let $\mathcal{P}_p: \L^p(\mathbb{T}^d) \to\wi\L^p,$ $p\in[1,\infty)$ be the Helmholtz-Hodge (or Leray) projection operator (cf.  \cite{JBPCK,DFHM}).	Note that $\mathcal{P}_p$ is a bounded linear operator and for $p=2$,  $\mathcal{P}:=\mathcal{P}_2$ is an orthogonal projection (see \cite[Section 2.1]{JCR4}). We define the Stokes operator 
	\begin{equation*}
		\left\{
		\begin{aligned}
			\A\u&:=-\mathcal{P}\Delta\u=-\Delta\u,\;\u\in\D(\A),\\
			\D(\A)&:=\V\cap{\H}^{2}_\mathrm{p}(\mathbb{T}^d). 
		\end{aligned}
		\right.
	\end{equation*}
	For the Fourier expansion $\u=\sum\limits_{k\in\mathbb{Z}^d} e^{2\pi i k\cdot x} \u_{k} ,$ we calculate by using Parseval's identity
	\begin{align*}
		\|\u\|_{\H}^2=\sum\limits_{k\in\mathbb{Z}^d} |\u_{k}|^2 \  \text{and} \ \|\A\u\|_{\H}^2=(2\pi)^4\sum_{k\in\mathbb{Z}^d}|k|^{4}|\u_{k}|^2.
	\end{align*}
	Therefore, we have 
	\begin{align*}
		\|\u\|_{\H^2_\mathrm{p}}^2=\sum_{k\in\mathbb{Z}^d}|\u_{k}|^2+\sum_{k\in\mathbb{Z}^d}|k|^{4}| \u_{k}|^2=\|\u\|_{\H}^2+\frac{1}{(2\pi)^4}\|\A\u\|_{\H}^2\leq\|\u\|_{\H}^2+\|\A\u\|_{\H}^2.
	\end{align*}
	Moreover, by the definition of $\|\cdot\|_{\H^2_\mathrm{p}}$, we have $	\|\u\|_{\H^2_\mathrm{p}}^2\geq\|\u\|_{\H}^2+\|\A\u\|_{\H}^2$ and hence it is immediate that both the norms are equivalent and  $\D(\I+\A)=\H^2_\mathrm{p}(\mathbb{T}^d)$. 
	
	Let us define a new operator $\mathscr{A}:\D(\mathscr{A})\subset \H\to\H$ by $\mathscr{A}:=\I+\A,$ where $\D(\mathscr{A})=\D(\A)$ and $\I$ is the identity operator on $\H$. It is clear that $\mathscr{A}$ is a non-negative self-adjoint and unbounded linear operator in $\H$. Then, we calculate
	\begin{align*}
		\|\mathscr{A}^{\frac{1}{2}}\u\|_{\H}^2=(\mathscr{A}^{\frac{1}{2}}\u,\mathscr{A}^{\frac{1}{2}}\u)=(\u,\mathscr{A}\u)=
		\|\u\|_{\H}^2+\|\nabla\u\|_{\H}^2=\|\u\|_{\V}^2,
	\end{align*}
	which yields that $\D(\mathscr{A}^{\frac{1}{2}})=\V$. Moreover, the inverse $(\I+\A)^{-1}$ is also self-adjoint. The invertibility of $\I+\A$  follows from the fact that $\A=-\Delta$ is an $m$-accretive operator in $\H$ (see \cite[Chapter 1]{VB1}), which says that $\mathrm{R}(\I+\A)=\H$. This means for every $\f\in\H$, $\u\in\D(\A)$, the equation $\A\u+\u=\f$ 
	has a unique solution. Also, we calculate
	\begin{align*}
		\|\u\|_{\V}^2=\|\u\|_{\H}^2+\|\nabla\u\|_{\H}^2=\|\u\|_{\H}^2+\langle\A\u,\u\rangle=(\f,\u)\leq\|\f\|_{\H}\|\u\|_{\H}\leq\|\f\|_{\H}\|\u\|_{\V},
	\end{align*}
	which implies that $\|\u\|_{\V}\leq\|\f\|_{\H}$, that is, $\|(\I+\A)^{-1}\f\|_{\V}\leq\|\f\|_{\H}.$ 
	Thus, $(\I+\A)^{-1}$ maps bounded subset of $\H$ into bounded subsets of $\V$. Since the embedding $\V\hookrightarrow\H$ is compact, therefore $(\I+\A)^{-1}$ is a compact operator on $\H$ and thus by the spectral mapping theorem, the spectrum of $\I+\A$ consists of an infinite sequence of eigenvalues $0<\lambda_1\leq\lambda_2\leq\ldots\lambda_k\leq\ldots,$ with $\lambda_k\to\infty$ as $k\to\infty$ such that  
	\begin{align*}
		\mathscr{A}\w_k=\lambda_k\w_k, \  \ \text{for} \ k=1,2,\ldots,
	\end{align*} 
	where $\w_k$'s are the corresponding eigenfunctions in $\D(\mathscr{A})$ which forms an orthonormal basis for $\H$. We can express $\u$ in the Fourier expansion form as $\u=\sum\limits_{k\in\mathbb{Z}^d}\u_{k} \mathrm{e}^{2\pi i k\cdot x /  \mathrm{L}}$ or we can write
	\begin{align*}
		\u=\sum\limits_{k\in\mathbb{Z}^d\setminus\{\boldsymbol{0}\}}\u_{k} \mathrm{e}^{2\pi i k\cdot\x /  \mathrm{L}} +\u_0 :=\wi{\u}+\u_0,
	\end{align*}
	where $\wi{\u}:=\sum\limits_{k\in\mathbb{Z}^d\setminus\{\boldsymbol{0}\}}\u_{k} \mathrm{e}^{2\pi i k\cdot x /  \mathrm{L}}$ and $\u_0$ is the element corresponding to $k=(0,\ldots,0)$. 
	Then from \cite[Chapter 6]{JCR4}, we calculate 
	\begin{align*}
		\A\u=-\Delta\u=\left(\frac{4\pi^2}{\mathrm{L}^2}\right)\sum\limits_{k\in\mathbb{Z}^d}|k|^2\u_{k} \mathrm{e}^{2\pi i k\cdot x /  \mathrm{L}},
	\end{align*}
	and therefore $\mathscr{A}\u=(\I+\A)\u=\sum\limits_{k\in\mathbb{Z}^d}\left(1+\frac{4\pi^2}{\mathrm{L}^2}\right)|k|^2\u_{k} \mathrm{e}^{2\pi i k\cdot x /  \mathrm{L}}.$
	It shows that $\u_k$ is an eigenfunction of $\mathscr{A}$ corresponding to the eigenvalue $1+\tilde{\lambda}_k$, where $\tilde{\lambda}_k=\frac{4\pi^2}{\mathrm{L}^2}|k|^2$. In particular, the operator $\mathscr{A}$ and the Stokes operator $\A$ have the same eigenfunctions corresponding to eigenvalues $1+\tilde{\lambda}_k$ and $\tilde{\lambda}_k$, respectively. For $k=\boldsymbol{0}$ and any constant vector $\w_0,$   we have $\mathscr{A}\w_0=(\I+\A)\w_0=\w_0=(1+0)\w_0$. Thus, $\w_0$ is an eigenfunction of $\mathscr{A}$ corresponding to the eigenvalue $\lambda_0=1$. Note that $0$ is an eigenvalue of the  Stokes operator $\A$.
	
	From \cite[Chapter 6]{JCR4}, the eigenfunctions of the operator $\mathscr{A}=\I+\A$ on the torus $\mathbb{T}^d$ are given by  
	\begin{align}\label{ef}
		\w_k^{(s)}:=\sqrt{\frac{2}{\mathrm{L}^d}}\sin\left(2\pi k\cdot\frac{x}{\mathrm{L}}\right),  \ \w_k^{(c)}:=\sqrt{\frac{2}{\mathrm{L}^d}} \cos\left(2\pi k\cdot\frac{x}{\mathrm{L}}\right) \ \ \text{and} \ \w_k^{(0)}=\w_0,
	\end{align}
	with eigenvalues $\lambda_k=1+\frac{4\pi^2}{\mathrm{L}^2}|k|^2$. We can order these eigenvalues in a non-decreasing manner such that for every $\lambda_k $, $k\in\mathbb{Z}^d\setminus\{\boldsymbol{0}\}$, one can find a corresponding eigenvalue $\lambda_m$ for some appropriate $m\in\N$, with $\lambda_{m+1}\geq\lambda_{m}$ and the corresponding eigenfunction $\w_k=\w_m$ (cf. \cite[pp. 52]{FMRT}).
 For $\u\in \H$ and  $\alpha>0,$ one can define
$\A^\alpha \u=\sum_{k=1}^\infty \lambda_k^\alpha \u_k \boldsymbol{e}_k,  \ \u\in\D(\A^\alpha), $ where $\D(\A^\alpha)=\left\{\u\in \H:\sum_{k=1}^\infty \lambda_k^{2\alpha}|\u_k|^2<+\infty\right\},$ 
and   $\D(\A^\alpha)$ is equipped with the norm 
$
	\|\A^\alpha \u\|_{\H}=\left(\sum_{k=1}^\infty \tilde{\lambda}_k^{2\alpha}|\u_k|^2\right)^{1/2}.
$
% It is easy to observe that $\D(\A^{\frac{\alpha}{2}})=\big\{\u\in  {\H}^{\alpha}_p(\mathbb{T}^d):\nabla\cdot\u=0\big\}$ and $\|\A^{\frac{\alpha}{2}}\u\|_{\H}\leq C\|\u\|_{ {\H}^{\alpha}_p},$ for all $\u\in\D(\A^{\frac{\alpha}{2}})$, $\alpha\geq 0$ (cf. \cite{JCR_2001}).  Using Rellich-Kondrachov compactness embedding theorem, we infer that for any $0\leq s_1<s_2,$ the embedding $\D(\A^{s_2})\hookrightarrow \D(\A^{s_1})$ is  compact. 

	\subsection{Bilinear operator}
	Let us define the \textsl{trilinear form} $b(\cdot,\cdot,\cdot):\V\times\V\times\V\to\R$ by $$b(\u,\v,\w)=\int_{\mathbb{T}^d}(\u(x)\cdot\nabla)\v(x)\cdot\w(x)\d x=\sum_{i,j=1}^d\int_{\mathbb{T}^d}\u_i(x)\frac{\partial \v_j(x)}{\partial x_i}\w_j(x)\d x.$$ If $\u, \v$ are such that the linear map $b(\u, \v, \cdot) $ is continuous on $\V$, the corresponding element of $\V'$ is denoted by $\mathcal{B}(\u, \v)$. We also denote $\mathcal{B}(\u) = \mathcal{B}(\u, \u)=\mathcal{P}[(\u\cdot\nabla)\u]$.
	An integration by parts yields 
	\begin{equation}\label{b0}
		\left\{
		\begin{aligned}
			b(\u,\v,\w) &=  -b(\u,\w,\v),\ \text{ for all }\ \u,\v,\w\in \V,\\
			b(\u,\v,\v) &= 0,\ \text{ for all }\ \u,\v \in\V.
		\end{aligned}
		\right.
	\end{equation}

	The following estimates on the trilinear form $b(\cdot,\cdot,\cdot)$ is useful in the sequel (\cite[Chapter 2, Section 2.3]{Te1}):
		\begin{itemize}
			\item [$(i)$]
			For $d=2$, 
			\begin{align}\label{b2}
				|b(\u,\v,\w)|&\leq C\begin{cases}
					\|\u\|^{1/2}_{\H}\|\u\|^{1/2}_{\V}\|\v\|_{\V}\|\w\|^{1/2}_{\H}\|\w\|^{1/2}_{\V}, &\text{for all }\  \u, \v, \w\in \V,\\
					\|\u\|^{1/2}_{\H}\|\u\|^{1/2}_{\V}\|\v\|^{1/2}_{\V}\|\v\|^{1/2}_{\H^2}\|\w\|_{\H}, &\text{for all }\ \u\in \V, \v\in \D(\A), \w\in \H,\\
					\|\u\|_{\H}^{1/2}\|\u\|_{\H^2}^{1/2}\|\v\|_{\V}\|\w\|_{\H},&\text{for all }\ \u\in\D(\A),\v\in\V,\w\in\H. 
				\end{cases}
			\end{align}
			
			\item[$(ii)$]
			For $d=3$,
			\begin{align}\label{b4}
				|b(\u,\v,\w)|&\leq C\begin{cases}
					\|\u\|^{1/4}_{\H}\|\u\|^{3/4}_{\V}\|\v\|_{\V}\|\w\|^{1/4}_{\H}\|\w\|^{3/4}_{\V}, &\text{for all }\  \u, \v, \w\in \V,\\
					\|\u\|_{\V}\|\v\|_{\V}^{1/2}\|\v\|_{\H^2}^{1/2}\|\w\|_{\H}, &\text{for all }\ \u\in\V, \v\in\D(\A), \w\in\H, \\
					\|\u\|_{\V}^{1/2}\|\u\|_{\H^2}^{1/2}\|\v\|_{\V}\|\w\|_{\H},&\text{for all }\ \u\in\D(\A),\v\in\V,\w\in\H. 
				\end{cases}
			\end{align}
		\end{itemize}
	The above estimates can be derived by using H\"oder's, Sobolev's, Ladyzhenskaya's, Agmon's and Gagliardo-Nirenberg's inequalities. 
		%	The explicit values of the constants $c_1$ and $c_2$ are obtained in Appendix A, \cite{CO.MT}. 

	\subsection{Nonlinear operator}
	Let us now consider the operator $\mathcal{C}(\u):=\mathcal{P}(|\u|^{r-1}\u)$. It is immediate that $\langle\mathcal{C}(\u),\u\rangle =\|\u\|_{\widetilde{\L}^{r+1}}^{r+1}$. From  \cite[Subsection 2.4]{MT2}, we have 
	\begin{align}\label{2.23}
		\langle\mathcal{C}(\u)-\mathcal{C}(\v),\u-\v\rangle&\geq \frac{1}{2}\||\u|^{\frac{r-1}{2}}(\u-\v)\|_{\H}^2+\frac{1}{2}\||\v|^{\frac{r-1}{2}}(\u-\v)\|_{\H}^2\nonumber\\&\geq \frac{1}{2^{r-1}}\|\u-\v\|_{\wi\L^{r+1}}^{r+1}\geq 0,
	\end{align}
	for $r\geq 1$.  The map $\mathcal{C}(\cdot):\widetilde{\L}^{r+1}\to\widetilde{\L}^{\frac{r+1}{r}}$ is Gateaux differentiable with Gateaux derivative 
	\begin{align}\label{2p9}
		\mathcal{C}'(\u)\v&=\left\{\begin{array}{cl}\mathcal{P}(\v),&\text{ for }r=1,\\ \left\{\begin{array}{cc}\mathcal{P}(|\u|^{r-1}\v)+(r-1)\mathcal{P}\left(\frac{\u}{|\u|^{3-r}}(\u\cdot\v)\right),&\text{ if }\u\neq \boldsymbol{0},\\\boldsymbol{0},&\text{ if }\u=\boldsymbol{0},\end{array}\right.&\text{ for } 1<r<3,\\ \mathcal{P}(|\u|^{r-1}\v)+(r-1)\mathcal{P}(\u|\u|^{r-3}(\u\cdot\v)), &\text{ for }r\geq 3,\end{array}\right.
	\end{align}
	for all $\u,\v\in\widetilde{\L}^{r+1}$. Moreover, for $r\geq3$, $\mathcal{C}(\cdot)$ is twice Gateaux differentiable  with second order Gateaux derivative
	\begin{align}\label{30}
	\mathcal{C}''(\u)(\v\otimes\w)=\left\{\begin{array}{ll} (r-1)\mathcal{P}\left\{|\u|^{r-3}\left[(\u\cdot\w)\v+(\u\cdot\v)\w+(\w\cdot\v)\u\right]\right\}& \\+\left\{\begin{array}{cl}(r-1)(r-3)\mathcal{P}\left[\frac{\u}{|\u|^{5-r}}(\u\cdot\v)(\u\cdot\w)\right],&\text{ for }\u\neq \boldsymbol{0},\\ \boldsymbol{0}&\text{ for }\u=\boldsymbol{0},\end{array}\right.&\text{ for }3\leq r< 5,\\(r-1)\mathcal{P}\left\{|\u|^{r-3}\left[(\u\cdot\w)\v+(\u\cdot\v)\w+(\w\cdot\v)\u\right]\right\}& \\+(r-1)(r-3)\mathcal{P}\left[|\u|^{r-5}(\u\cdot\v)(\u\cdot\w)\u\right],&\text{ for }r\geq 5,\end{array}\right.
\end{align}
	for all $\u,\v,\w\in\widetilde{\L}^{r+1}$.
	\begin{remark}\label{rem2.1}
		On a torus (cf. \cite{KWH,MT2}), we have 
		\begin{align}\label{P.11}
			\|\u\|_{\widetilde{\L}^{3(r+1)}}^{r+1}\leq C\int_{\mathbb{T}^d}|\nabla \u(x)|^2|\u(x)|^{r-1}\d x + C\int_{\mathbb{T}^d}|\u(x)|^{r+1}\d x,
		\end{align}
		for $d=3$ and $r\geq 1$. Moreover, we obtain 
		\begin{align}\label{P.22}
			\|\u\|_{\widetilde{\L}^{p(r+1)}}^{r+1}  =\||\u|^{\frac{r+1}{2}}\|_{\wi\L^{2p}}^{2}
			  & \leq C \int_ {\mathbb{T}^d}|\nabla|\u|^{\frac{r+1}{2}}|^{2}\d x + C\int_{\mathbb{T}^d}|\u(x)|^{r+1}\d x 
			\nonumber\\ & \leq C\int_{\mathbb{T}^d}|\nabla \u(x)|^2|\u(x)|^{r-1}\d x + C\int_{\mathbb{T}^d}|\u(x)|^{r+1}\d x,
		\end{align}
		for $d=2$ and for all $p\in[2,\infty).$	
	\end{remark}

\begin{remark}
\textbf{1.}	We obtain from Agmon's inequality (for $d=2$) that
	\begin{align}\label{Agmon2}
		\hspace{-8mm}	\|\u\|_{\wi\L^{\infty}}\leq \|\u\|^{\frac{1}{2}}_{\H}\|\u\|^{\frac{1}{2}}_{\H^2}= \|\u\|^{\frac{1}{2}}_{\H}\|(\mathrm{I}+\A)\u\|^{\frac{1}{2}}_{\H} \leq C \left[\|\u\|_{\H}+\|\u\|^{\frac{1}{2}}_{\H}\|\A\u\|^{\frac{1}{2}}_{\H}\right].
	\end{align}	
\textbf{2.}	 Moreover, Agmon's inequality (for $d=3$) yield 
	\begin{align}\label{Agmon3}
		\|\u\|_{\wi\L^{\infty}} & \leq \|\u\|^{\frac{1}{2}}_{\H^1}\|\u\|^{\frac{1}{2}}_{\H^2}= \|\u\|^{\frac{1}{2}}_{\V}\|(\mathrm{I}+\A)\u\|^{\frac{1}{2}}_{\H} 
		\nonumber\\ & \leq C \left[\|\u\|^{\frac{1}{2}}_{\H}\|\u\|^{\frac{1}{2}}_{\V}+\|\u\|^{\frac{1}{2}}_{\V}\|\A\u\|^{\frac{1}{2}}_{\H}\right]
		\nonumber\\ & \leq C \left[\|\u\|_{\H} + \|\u\|^{\frac{1}{2}}_{\H}\|\nabla\u\|^{\frac{1}{2}}_{\H}+\|\u\|^{\frac{1}{2}}_{\H}\|\A\u\|^{\frac{1}{2}}_{\H}++\|\nabla\u\|^{\frac{1}{2}}_{\H}\|\A\u\|^{\frac{1}{2}}_{\H}\right].\hspace{4mm}
	\end{align}	
\end{remark}

Taking the Helmholtz-Hodge projection onto the system \eqref{1}, we rewrite 
\begin{equation}\label{555}
	\left\{
	\begin{aligned}
		\frac{\d  \u(t)}{\d  t}+\mu \A\u(t)+\B(\u(t))+\alpha\u(t)+\beta\mathcal{C}(\u(t))&=\boldsymbol{f}, \\
		\u(0)&=\boldsymbol{x},
	\end{aligned}\right. 
\end{equation} 
where for simplicity of notation we used $\mathcal{P}\f$ as $\f$. For the case $d=2,\ r\in[1,\infty)$ and $d=3,\ r\in[3,\infty)$ ($2\beta\mu\geq 1$ for $d=r=3$),  for $\x\in\H$ and $\f\in\mathrm{L}^2(0,T;\V')$, the existence and uniqueness of global Leray-Hopf weak solutions 
$$\u\in\mathrm{C}([0,T];\H)\cap\mathrm{L}^2(0,T;\V)\cap\mathrm{L}^{r+1}(0,T;\wi\L^{r+1})$$ 
satisfying the energy equality 
\begin{align}\label{211}
	&\|\u(t)\|_{\H}^2+2\mu\int_0^t\|\nabla\u(s)\|_{\H}^2\d s+2\alpha\int_0^t\|\u(s)\|_{\H}^2\d s+2\beta\int_0^t\|\u(s)\|_{\wi\L^{r+1}}^{r+1}\d s\nonumber\\&=\|\x\|_{\H}^2+2\int_0^t\langle\f(s),\u(s)\rangle\d s,
\end{align}
for all $t\in[0,T]$  of the system \eqref{555} is established in \cite{KWH,KT2,MT1}. Moreover, for  $\f\in\mathrm{L}^2(0,T;\H)$, one can obtain the existence of a unique strong solution $\u\in\C((0,T];\V)\cap \mathrm{L}^2(\epsilon,T;\D(\A))\cap\mathrm{L}^{r+1}(\epsilon,T;\wi\L^{3(r+1)}), $ for any $\epsilon>0$  to the system \eqref{1} (cf. \cite{KWH}). 
	\section{Backward Uniqueness and Applications}\label{Sec3}\setcounter{equation}{0}
	In this section, we prove the backward uniqueness result for the system \eqref{555} for the case $d=2,\ r\in[1,\infty)$ and $d=3,\ r\in[3,\infty)$ ($2\beta\mu\geq 1$ for $d=r=3$) by using a log convexity method. We use the regularity estimates of the system \eqref{555} to obtain the required results. We follow the works \cite{VBMR,JMG,IK,LEP,JCR5} to establish the backward uniqueness results. 
	\begin{theorem}[Backward uniqueness]\label{thm3.1}
		Let $\x\in\H$, $\f\in\mathrm{W}^{1,2}(0,T;\H)$ and  $\u_1,\u_2$ satisfy the first equation in the system \eqref{555}. If $\u_1(T)=\u_2(T)$ in $\H$, then $\u_1(t)=\u_2(t)$ in $\H$ for all $t\in[0,T]$. 
		\end{theorem}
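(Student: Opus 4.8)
The plan is to run the logarithmic convexity method on the nonlinear Dirichlet-type quotient $\Lambda$ from \eqref{13}. Set $\u := \u_1 - \u_2$ and subtract the two copies of the first equation in \eqref{555}; since the forcing $\f$ is common to both solutions it cancels, leaving
$$\frac{\d\u}{\d t} + \mathcal{A}(\u) + [\mathcal{B}(\u_1) - \mathcal{B}(\u_2)] = \boldsymbol{0}, \qquad \mathcal{A}(\u) = \mu\A\u + \alpha\u + \beta[\mathcal{C}(\u_1) - \mathcal{C}(\u_2)].$$
First I would reduce the theorem to a non-vanishing assertion. Since \eqref{555} enjoys forward uniqueness in the stated range, the zero set $\{t : \u(t) = \boldsymbol{0}\}$ is a closed interval $[\tau, T]$, and it suffices to rule out $\tau > 0$. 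By the strong-solution regularity recalled in Section~\ref{Sec2} one may work on any compact subinterval $[t_0, \tau] \subset (0, T)$ on which $n(t) := \|\u(t)\|_{\H}^2 > 0$ for $t < \tau$ and $n(\tau) = 0$.

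On such an interval, using the cancellation $b(\u_2, \u, \u) = 0$ from \eqref{b0}, the energy balance reads
$$\tfrac{1}{2}\tfrac{\d}{\d t} n = -\langle\mathcal{A}(\u), \u\rangle - b(\u, \u_1, \u), \qquad \tfrac{\d}{\d t}\log n = -2\Lambda - \tfrac{2 b(\u,\u_1,\u)}{n}.$$
The heart of the argument is a differential inequality for $\Lambda = \langle\mathcal{A}(\u),\u\rangle/n$. Differentiating and substituting $\tfrac{\d\u}{\d t} = -\mathcal{A}(\u) - [\mathcal{B}(\u_1) - \mathcal{B}(\u_2)]$, I would isolate the structural contribution
$$-\frac{2\|\mathcal{A}(\u)\|_{\H}^2}{n} + 2\Lambda^2 = -\frac{2}{n^2}\left(\|\mathcal{A}(\u)\|_{\H}^2\|\u\|_{\H}^2 - \langle\mathcal{A}(\u),\u\rangle^2\right) \le 0,$$
which is nonpositive by Cauchy--Schwarz and plays the role of the classical ``variance'' term. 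The remaining terms come from (i) the bilinear coupling $\Lambda\, b(\u,\u_1,\u)/n$ and $\langle\mathcal{A}(\u), \mathcal{B}(\u_1) - \mathcal{B}(\u_2)\rangle/n$, and (ii) the failure of $\mathcal{A}$ to be self-adjoint, producing the defect $-\beta\langle\mathcal{C}(\u_1)-\mathcal{C}(\u_2), \tfrac{\d\u}{\d t}\rangle$ together with the time-derivative term $\beta\langle\mathcal{C}'(\u_1)\tfrac{\d\u_1}{\d t} - \mathcal{C}'(\u_2)\tfrac{\d\u_2}{\d t}, \u\rangle$. Using the monotonicity \eqref{2.23}, the bilinear estimates \eqref{b2}--\eqref{b4}, the explicit form \eqref{2p9} of $\mathcal{C}'$, and the a priori bounds on $\u_1, \u_2$ in $\mathrm{L}^\infty(t_0,T;\D(\A))$ and on $\tfrac{\d\u_i}{\d t}$ in $\mathrm{L}^\infty(t_0,T;\V)$ (guaranteed by $\f \in \mathrm{W}^{1,2}(0,T;\H)$), I expect each of these to be dominated by $C(1 + \Lambda)n$, so that after dividing by $n$ one obtains $\tfrac{\d\Lambda}{\d t} \le C_1\Lambda + C_2$ on $[t_0, \tau)$, with $C_1, C_2$ depending only on the finite regularity norms.

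With this inequality in hand, Gronwall's lemma applied forward from $t_0$ bounds $\Lambda$ by a finite constant $M$ on all of $[t_0, \tau]$. Feeding $\Lambda \le M$ and the resulting bound on $b(\u,\u_1,\u)/n$ into the log-derivative identity gives $\tfrac{\d}{\d t}\log n \ge -C_3$ on $[t_0, \tau)$, whence $n(t) \ge n(t_0)e^{-C_3(t - t_0)} > 0$ for every $t \in [t_0, \tau]$. This contradicts $n(\tau) = \|\u(\tau)\|_{\H}^2 = 0$. Therefore $\tau = 0$, i.e. $\u(t) = \boldsymbol{0}$ for all $t \in [0,T]$, which is the claim.

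The main obstacle is precisely the estimate of $\tfrac{\d\Lambda}{\d t}$: because $\mathcal{A}$ is genuinely nonlinear and time dependent, the clean monotonicity $\dot\Lambda \le 0$ available in the self-adjoint linear setting is lost, and one must instead absorb the bilinear coupling and the two $\mathcal{C}$-generated defect terms into $C(1+\Lambda)$. This is where building $\Lambda$ from $\mathcal{A}(\u)$ as in \eqref{13} (so that the monotone structure \eqref{2.23} contributes with a definite sign) and the time-derivative regularity forced by $\f \in \mathrm{W}^{1,2}(0,T;\H)$ become essential; for the borderline case $d = r = 3$ the hypothesis $2\beta\mu \ge 1$ is what lets the cubic damping absorb the bilinear term against the viscosity.
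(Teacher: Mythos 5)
Your overall strategy coincides with the paper's Step 3: the same nonlinear quotient $\Lambda$ from \eqref{13}, the same variance-type term (your Cauchy--Schwarz expression $-2\|\mathcal{A}(\u)\|_{\H}^2/n+2\Lambda^2$ is exactly the paper's $-2\|\mathcal{A}(\u)-\Lambda\u\|_{\H}^2/n$), the same defect terms arising from the time dependence of $\mathcal{A}$, a Gronwall bound for $\Lambda$, and the lower bound on $\frac{\d}{\d t}\log n$ that contradicts vanishing at the putative first zero. Your reduction via forward uniqueness (the zero set being $[\tau,T]$) is a cleaner packaging of the paper's alternative (i)/(ii) and is fine.

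There is, however, a genuine gap: you run the nonlinear quotient over the whole stated range, including $d=2$, $r\in[1,3)$. Your defect term $\beta\langle\mathcal{C}'(\u_1)\partial_t\u_1-\mathcal{C}'(\u_2)\partial_t\u_2,\u\rangle$ must be dominated by $C(1+\Lambda)n$, which forces it to be quadratic in $\u$; concretely one needs a Lipschitz-type bound $|\mathcal{C}'(\u_1)-\mathcal{C}'(\u_2)|\lesssim(|\u_1|+|\u_2|)^{r-2}|\u|$, i.e.\ a bound on $\mathcal{C}''$. For $r<2$ the map $\mathcal{C}'$ is only $(r-1)$-H\"older ($\mathcal{C}''$ scales like $|\cdot|^{r-2}$ and blows up at $\boldsymbol{0}$; the paper's formula \eqref{30} is stated only for $r\geq3$), and the best pointwise bound yields a defect of order $\int|\u|^{r}|\partial_t\u_1|\d x$; after dividing by $n=\|\u\|_{\H}^2$ this behaves like $\|\u\|_{\H}^{r-2}$, which diverges exactly as $t\to\tau$ where $\u\to\boldsymbol{0}$, so the inequality $\frac{\d\Lambda}{\d t}\leq C_1\Lambda+C_2$ cannot be closed where you need it. The paper sidesteps this by switching, for $d=2$, $r\in[1,3]$, to the linear self-adjoint operator $\widetilde{\mathcal{A}}\u=\mu\A\u+\alpha\u$ of system \eqref{353}, moving the entire damping difference into the right-hand side $\widetilde{h}(\u)$, which is linear in $\u$ with coefficients depending only on $\u_1,\u_2$ (estimate \eqref{47}); with a time-independent self-adjoint $\widetilde{\mathcal{A}}$ no defect terms occur at all. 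A second, repairable, issue: you invoke $\u_i\in\mathrm{L}^{\infty}(t_0,T;\D(\A))$ and $\partial_t\u_i\in\mathrm{L}^{\infty}(t_0,T;\V)$ as ``guaranteed'' by $\f\in\mathrm{W}^{1,2}(0,T;\H)$, but the paper establishes only $\mathrm{L}^2$-in-time bounds for $\A\u_i$ (estimate \eqref{05}) and for $\partial_t\u_i$ in $\V$ (estimate \eqref{011}) in general; the uniform bound \eqref{334} on $\|\partial_t\u_i\|_{\V}$ is proved only for $d=3$, $r\geq5$. Since Gronwall needs only time-integrable coefficients, you should state $C_1,C_2$ as $\mathrm{L}^1$ functions of $t$ built from these $\mathrm{L}^2$-in-time quantities rather than assert unproved uniform bounds.
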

	\begin{proof}
		The proof of this theorem has been divided into the following steps: For completeness, we  provide the forward uniqueness result also. 
		\vskip 0.2cm
	\noindent 	\textbf{Step 1:} \textsl{Energy equality and forward uniqueness:} 
		Let us first consider the case $d=2,3$ and $r\in(3,\infty)$.	
		Taking the inner product with $\u(\cdot)$ to the first equation in \eqref{555} and then integrating from $0$ to $T$, we obtain  for all $t\in[0,T]$
		\begin{align*}
			&\|\u(t)\|_{\H}^2+2\mu\int_0^t\|\nabla\u(s)\|_{\H}^2\d s+2\alpha\int_0^t\|\u(s)\|_{\H}^2\d s+2\beta\int_0^t\|\u(s)\|_{\wi\L^{r+1}}^{r+1}\d s\nonumber\\&=\|\x\|_{\H}^2+2\int_0^t(\f(s),\u(s)) \d s
			  \leq \|\x\|_{\H}^2+\alpha\int_0^t\|\u(s)\|_{\H}^2\d s+\frac{1}{\alpha}\int_0^t\|\f(s)\|_{\H}^2\d s. 
		\end{align*}
	Therefore, we have 
	\begin{align}\label{02}
		&\|\u(t)\|_{\H}^2+2\mu\int_0^t\|\nabla\u(s)\|_{\H}^2\d s + \alpha\int_0^t\|\u(s)\|_{\H}^2\d s+2\beta\int_0^t\|\u(s)\|_{\wi\L^{r+1}}^{r+1}\d s\nonumber\\&\leq \|\x\|_{\H}^2+\frac{1}{\alpha}\int_0^T\|\f(t)\|_{\H}^2\d t =: K,
	\end{align}
	for all $t\in[0,T]$. One can establish the forward uniqueness of the system \eqref{1} in the following way: Let $\u_1(\cdot)$ and $\u_2(\cdot)$ be two weak solutions of the system \eqref{555} with the same initial  data and forcing, say $\x$ and $\f$, respectively. Then $\u=\u_1-\u_2$ satisfies the following system in $\V'+\wi\L^{\frac{r+1}{r}}$ for a.e. $t\in[0,T]$
		\begin{align}
			\left\{
			\begin{aligned}
				\frac{\d  \u(t)}{\d  t}+\mu\A\u(t)+[\B(\u_1(t),\u(t))+\B(\u(t),\u_2(t))]+\alpha\u(t)+\beta[\mathcal{C}(\u_1(t))-\mathcal{C}(\u_1(t))]&=\boldsymbol{0},\\
				\u(0)&=\boldsymbol{0}.
			\end{aligned}\right. 
		\end{align}
	Taking the inner product with $\u(\cdot)$ and integrating from $0$ to $t$, we obtain 
	\begin{align}\label{04}
		&\|\u(t)\|_{\H}^2+2\mu\int_0^t\|\nabla\u(s)\|_{\H}^2\d s+2\alpha\int_0^t\|\u(s)\|_{\H}^2\d s+2\beta\int_0^t\langle\mathcal{C}(\u_1(s))-\mathcal{C}(\u_1(s)),\u(s)\rangle\d s\nonumber\\&=\|\u(0)\|_{\H}^2-2\int_0^t\langle\B(\u(s),\u_2(s)),\u(s)\rangle\d s,
		\end{align}
	for all $t\in[0,T]$. Note that $\langle\B(\u_1)-\B(\u_2),\u\rangle=\langle\B(\u,\u_2),\u\rangle,$ since $\langle\B(\u_1,\u),\u\rangle=0$. 
For $d=2,3$ and $r\in(3,\infty)$,  using H\"older's and Young's inequalities, we estimate $|\langle\B(\u,\u_2),\u\rangle|$ as  
\begin{align}\label{2p28}
	|\langle\B(\u,\u),\u_2\rangle|&\leq\|\nabla\u\|_{\H}\||\u_2|\u\|_{\H}\leq\frac{\mu }{2}\|\nabla\u\|_{\H}^2+\frac{1}{2\mu }\||\u_2|\u\|_{\H}^2.
\end{align}
We take the term $\||\u_2|\u\|_{\H}^2$ from \eqref{2p28} and use H\"older's and Young's inequalities to estimate it as 
\begin{align}\label{2.29}
	\int_{\mathcal{O}}|\u_2(x)|^2|\u(x)|^2\d x&=\int_{\mathcal{O}}|\u_2(x)|^2|\u(x)|^{\frac{4}{r-1}}|\u(x)|^{\frac{2(r-3)}{r-1}}\d x\nonumber\\&\leq\left(\int_{\mathcal{O}}|\u_2(x)|^{r-1}|\u(x)|^2\d x\right)^{\frac{2}{r-1}}\left(\int_{\mathcal{O}}|\u(x)|^2\d x\right)^{\frac{r-3}{r-1}}\nonumber\\&\leq\frac{\beta\mu }{2}\left(\int_{\mathcal{O}}|\u_2(x)|^{r-1}|\u(x)|^2\d x\right)+\frac{r-3}{r-1}\left(\frac{4}{\beta\mu (r-1)}\right)^{\frac{2}{r-3}}\left(\int_{\mathcal{O}}|\u(x)|^2\d x\right),
\end{align}
for $r>3$. Therefore, from \eqref{2p28}, we have 
	\begin{align}\label{347}
		&|\langle\B(\u,\u_2),\u\rangle|\leq\frac{\mu }{2}\|\nabla\u\|_{\H}^2+\frac{\beta}{4}\||\u_2|^{\frac{r-1}{2}}\u\|_{\H}^2+\frac{\vartheta}{2\mu}\|\u\|_{\H}^2,
	\end{align}
where $\vartheta=\frac{r-3}{r-1}\left(\frac{4}{\beta\mu (r-1)}\right)^{\frac{2}{r-3}}$. Moreover, from \eqref{2.23}, we have 
	\begin{align}\label{261}
	2	\beta\langle\mathcal{C}(\u_1)-\mathcal{C}(\u_2),\u\rangle & \geq \beta \||\u_1|^{\frac{r-1}{2}}\u\|_{\H}^2+ \beta \||\u_2|^{\frac{r-1}{2}}\u\|_{\H}^2 
		\nonumber\\ &  \geq \frac{\beta}{2}\||\u_1|^{\frac{r-1}{2}}\u\|_{\H}^2+\frac{\beta}{2}\||\u_2|^{\frac{r-1}{2}}\u\|_{\H}^2 + \frac{\beta}{2^{r-1}}\|\u\|_{\wi\L^{r+1}}^{r+1}
	\end{align}
Using \eqref{347} and \eqref{261} in \eqref{04}, we deduce
	\begin{align}\label{07}
	&\|\u(t)\|_{\H}^2+\mu\int_0^t\|\nabla\u(s)\|_{\H}^2\d s+2\alpha\int_0^t\|\u(s)\|_{\H}^2\d s+\frac{\beta}{2^{r-1}}\int_0^t\|\u(s)\|_{\wi\L^{r+1}}^{r+1}\d s\nonumber\\&\leq\|\u(0)\|_{\H}^2+\frac{\vartheta}{\mu}\int_0^t\|\u(s)\|_{\H}^2\d s,
\end{align}
 An application of Gronwall's inequality in \eqref{07} yields  for all $t\in[0,T]$
\begin{align}
	\|\u(t)\|_{\H}^2\leq\|\u(0)\|_{\H}^2e^{\frac{\vartheta T}{\mu} },
\end{align}
and the forward uniqueness follows.  For the case $d=r=3,$ we estimate $\langle\B(\u,\u_2),\u\rangle$ as 
\begin{align}\label{08}
	|\langle\B(\u,\u_2),\u\rangle|\leq\|\nabla\u\|_{\H}\||\u_2|\u\|_{\H}\leq\theta\mu\|\nabla\u\|^2_{\H}+\frac{1}{4\theta\mu}\||\u_2|\u\|_{\H}^2,
\end{align}
for some $0<\theta\leq 1$.
Using \eqref{261} and \eqref{08}, one can deduce  from \eqref{04} that 
\begin{align}
&	\|\u(t)\|_{\H}^2+(1-\theta)\mu\int_0^t\|\nabla\u(s)\|_{\H}^2\d s %+2\alpha\int_0^t\|\u(s)\|_{\H}^2\d s 
+\left(\beta-\frac{1}{2\theta\mu}\right)\int_0^t\||\u_2(s)|\u(s)\|_{\H}^2\d s\leq\|\u(0)\|_{\H}^2,
\end{align}
for all $t\in[0,T]$. For $2\theta\beta\mu\geq 1$, one can obtain the required result. For $d=2$ and $r\in[1,3]$, we estimate  $\langle\B(\u,\u_2),\u\rangle$ using H\"older's, Ldayzhenkaya's and Young's inequalities as 
\begin{align}\label{11}
	|\langle\B(\u,\u_2),\u\rangle|&\leq\|\u_2\|_{\wi\L^4}\|\nabla\u\|_{\H}\|\u\|_{\wi\L^4}\leq 2^{1/4}\|\u_2\|_{\wi\L^4}\|\nabla\u\|_{\H}^{3/2}\|\u\|_{\H}^{1/2}\nonumber\\&\leq\frac{\mu}{2}\|\nabla\u\|_{\H}^2+\frac{27}{16\mu^3}\|\u_2\|_{\wi\L^4}^4\|\u\|_{\H}^2. 
\end{align}
Combining \eqref{261} and \eqref{11}, and substituting it in \eqref{04}, we obtain 
\begin{align}\label{12}
	&\|\u(t)\|_{\H}^2+\mu\int_0^t\|\nabla\u(s)\|_{\H}^2\d s+2\alpha\int_0^t\|\u(s)\|_{\H}^2\d s+\frac{\beta}{2^{r-2}}\int_0^t\|\u(s)\|_{\wi\L^{r+1}}^{r+1}\d s\nonumber\\&\leq\|\u(0)\|_{\H}^2+\frac{27}{8\mu^3}\int_0^t\|\u_2(s)\|_{\wi\L^4}^4\|\u(s)\|_{\H}^2\d s,
\end{align}
for all $t\in[0,T]$. An application of Gronwall's inequality and then Ladyzhenskaya's inequality  in \eqref{12} yields  for all $t\in[0,T]$ 
\begin{align}
	\|\u(t)\|_{\H}^2\leq\|\u(0)\|_{\H}^2\exp\left(\frac{27}{4\mu^3}\sup_{t\in[0,T]}\|\u(t)\|_{\H}^2\int_0^T\|\u_2(t)\|_{\V}^2\d t\right),
\end{align}
and the uniqueness follows. 
		
		\vskip 0.2cm
	\noindent 	\textbf{Step 2:} \textsl{Further energy estimates:} For $d=2,3$ and $r\in(3,\infty)$, 	taking the inner product with $\A\u(\cdot)$  to the first equation in \eqref{555}  and then integrating from $\epsilon$ to $t$, we find 
		\begin{align}\label{03}
			&	\|\nabla\u(t)\|_{\H}^2+2\mu\int_{\epsilon}^t\|\A\u(s)\|_{\H}^2\d s+2\alpha\int_{\epsilon}^t\|\nabla\u(s)\|_{\H}^2\d s\nonumber\\&\quad+2\beta\int_{\epsilon}^t\||\u(s)|^{\frac{r-1}{2}}|\nabla\u(s)|\|_{\H}^2\d s+8\beta\left[\frac{(r-1)}{(r+1)^2}\right]\int_{\epsilon}^t\|\nabla|\u(s)|^{\frac{r+1}{2}}\|_{\H}^2\d s\nonumber\\&=\|\nabla\u(\epsilon)\|_{\H}^2+\left\{\begin{array}{cc}2\int_0^t(\f(s),\A\u(s))\d s,&\text{ for }d=2,\\
				2	\int_{\epsilon}^t(\B(\u(s)),\A\u(s))\d s+2\int_0^t(\f(s),\A\u(s))\d s,&\text{ for } d=3,\end{array}\right.\nonumber\\&\leq \|\nabla\u(\epsilon)\|_{\H}^2+\left\{\begin{array}{cc}\mu	\int_{\epsilon}^t\|\A\u(s)\|_{\H}^2\d s+\frac{1}{\mu}\int_0^t\|\f(s)\|_{\H}^2\d s,&\text{ for }d=2,\\
				\mu	\int_{\epsilon}^t\|\A\u(s)\|_{\H}^2\d s+\beta\int_{\epsilon}^t\||\u(s)|^{\frac{r-1}{2}}|\nabla\u(s)|\|_{\H}^2\d s&\\+\frac{2\vartheta}{\mu}\int_{0}^t\|\nabla\u(s)\|_{\H}^2\d s+\frac{2}{\mu}\int_0^t\|\f(s)\|_{\H}^2\d s,&\text{ for } d=3,\end{array}\right.
		\end{align}
		where we have used the fact that $(\B(\u),\A\u)=0$ in $d=2$ (\cite[Lemma 3.1]{Te1}) and  performed a calculation similar to \eqref{347}. Therefore, integrating the above inequality for $\epsilon\in(0,t)$ and then using \eqref{02}, we have $\text{for all } t\in[\epsilon,T],$
		\begin{align}\label{16}
			\|\nabla\u(t)\|_{\H}^2\leq \left\{\begin{array}{lc}\frac{K}{\mu  t}+\frac{1}{\mu}\int_0^T\|\f(t)\|_{\H}^2\d t,&\text{ for }d=2,\\
			\frac{K}{ \mu t}+\frac{2K\vartheta}{\mu^2}+\frac{2}{\mu}\int_0^T\|\f(t)\|_{\H}^2\d t,&\text{ for }d=3. \end{array}\right.
		\end{align}
		Therefore, from \eqref{03}, we further deduce 
		\begin{align}\label{05}
		&	\mu\int_{\epsilon}^t\|\A\u(s)\|_{\H}^2\d s+\beta\int_{\epsilon}^t\||\u(s)|^{\frac{r-1}{2}}|\nabla\u(s)|\|_{\H}^2\d s\leq \left\{\begin{array}{lc}\frac{K}{\mu \epsilon}+\frac{2}{\mu}\int_0^T\|\f(t)\|_{\H}^2\d t,&\text{ for }d=2,\\
			\frac{K}{\mu \epsilon}+\frac{4K\vartheta}{\mu^2}+\frac{4}{\mu}\int_0^T\|\f(t)\|_{\H}^2\d t,&\text{ for }d=3,\end{array}\right.
		\end{align}
		$\text{for all }\ t\in[\epsilon,T]$. Using Remark \ref{rem2.1}, 
	we obtain from \eqref{02} and \eqref{05} that
	\begin{align}\label{18}
		\int_{\epsilon}^t\|\u(s)\|_{\wi\L^{3(r+1)}}^{r+1}\d s\leq\left\{\begin{array}{lc}C\left(K+\frac{K}{\mu  \epsilon}+\frac{2}{\mu}\int_0^T\|\f(t)\|_{\H}^2\d t\right),&\text{ for }d=2,\\
		C\left(K +	\frac{K}{\mu \epsilon}+\frac{4K\vartheta}{\mu^2}+\frac{4}{\mu}\int_0^T\|\f(t)\|_{\H}^2\d t\right),&\text{ for }d=3.\end{array}\right.
	\end{align}
For $d=r=3$, we estimate $|(\B(\u),\A\u)|$ as 
\begin{align*}
	|(\B(\u),\A\u)|\leq\|\A\u\|_{\H}\||\u||\nabla\u|\|_{\H}\leq\frac{\theta\mu}{2}\|\A\u\|_{\H}^2+\frac{1}{2\theta\mu}\||\u||\nabla\u|\|_{\H}^2,
	\end{align*}
for some $0<\theta\leq 1$. Thus, from \eqref{03}, we deduce 
\begin{align}
		&\|\nabla\u(t)\|_{\H}^2+\mu(1-\theta)\int_{\epsilon}^t\|\A\u(s)\|_{\H}^2\d s+2\alpha\int_{\epsilon}^t\|\nabla\u(s)\|_{\H}^2\d s+2\left(\beta-\frac{1}{2\theta\mu}\right)\int_{\epsilon}^t\||\u(s)||\nabla\u(s)|\|_{\H}^2\d s\nonumber\\&\leq\|\nabla\u(\epsilon)\|_{\H}^2+\frac{1}{\mu}\int_0^t\|\f(s)\|_{\H}^2\d s,
\end{align}
for all $t\in[\epsilon,T]$ and some $0<\theta\leq 1$. Therefore, for $2\beta\mu\geq 1$, a calculation similar to \eqref{05} yields 
\begin{align}
&	\|\nabla\u(t)\|_{\H}^2+\mu(1-\theta)	\int_{\epsilon}^t\|\A\u(s)\|_{\H}^2\d s+2\left(\beta-\frac{1}{2\theta\mu}\right)\int_{\epsilon}^t\||\u(s)||\nabla\u(s)|\|_{\H}^2\d s
\nonumber\\&\leq\frac{K}{\mu  \epsilon}+\frac{1}{\mu}\int_0^T\|\f(t)\|_{\H}^2\d t,
\end{align}
for all $t\in[\epsilon,T]$. 

	Taking the inner product with $\frac{\d\u}{\d t}$ to the first equation in \eqref{555}, we obtain  for a.e. $t\in[\epsilon,T]$
\begin{align*}
	&	\left\|\frac{\d\u(t)}{\d t}\right\|_{\H}^2+\frac{\mu}{2}\frac{\d}{\d t}\|\nabla\u(t)\|_{\H}^2+\frac{\alpha}{2}\frac{\d}{\d t}\|\u(t)\|_{\H}^2+\frac{\beta}{r+1}\frac{\d}{\d t}\|\u(t)\|_{\wi\L^{r+1}}^{r+1}\nonumber\\&=-\left(\B(\u(t)),\frac{\d\u(t)}{\d t}\right)+\left(\f(t),\frac{\d\u(t)}{\d t}\right)\nonumber\\&\leq\left\|\frac{\d\u(t)}{\d t}\right\|_{\H}\||\u(t)||\nabla\u(t)|\|_{\H}+\|\f(t)\|_{\H}\left\|\frac{\d\u(t)}{\d t}\right\|_{\H}\nonumber\nonumber\\&\leq\frac{1}{2}\left\|\frac{\d\u(t)}{\d t}\right\|_{\H}^2+\||\u(t)||\nabla\u(t)|\|_{\H}^2+\|\f(t)\|_{\H}^2\nonumber\\&\leq \frac{1}{2}\left\|\frac{\d\u(t)}{\d t}\right\|_{\H}^2+\frac{\beta\mu}{2}\||\u(t)|^{\frac{r-1}{2}}|\nabla\u(t)|\|_{\H}^2+\vartheta\|\nabla\u(t)\|_{\H}^2.
\end{align*}
Integrating the above inequality from $\epsilon$ to $t$, we deduce  by using Ladyzhenskaya's inequality as 
\begin{align}\label{5p48}
	&	\int_{\epsilon}^t\left\|\frac{\d\u(s)}{\d t}\right\|_{\H}^2\d s+\mu\|\nabla\u(t)\|_{\H}^2+\alpha\|\u(t)\|_{\H}^2+\frac{2\beta}{r+1}\|\u(t)\|_{\wi\L^{r+1}}^{r+1}
	\nonumber\\&\leq \mu\|\nabla\u(\epsilon)\|_{\H}^2+\alpha\|\u(\epsilon)\|_{\H}^2+\frac{2\beta}{r+1}\|\u(\epsilon)\|_{\wi\L^{r+1}}^{r+1}+\beta\mu\int_{\epsilon}^t\||\u(s)|^{\frac{r-1}{2}}|\nabla\u(s)|\|_{\H}^2\d s
	\nonumber\\ & \quad +2\vartheta\int_0^t\|\nabla\u(s)\|_{\H}^2\d s
%	\nonumber\\&\leq  \left\{\begin{array}{lc}\frac{K}{ \epsilon} + \int_0^T\|\f(t)\|_{\H}^2\d t,&\text{ for }d=2,\\
%		\frac{K}{ \epsilon}+\frac{2K\vartheta}{\mu}+ 2 \int_0^T\|\f(t)\|_{\H}^2\d t,&\text{ for }d=3. \end{array}\right. +\alpha K +\frac{2\beta}{r+1}\|\u(\epsilon)\|_{\wi\L^{r+1}}^{r+1} 
%	\nonumber\\ & \quad +   \left\{\begin{array}{lc}\frac{K}{ \epsilon}+ 2\int_0^T\|\f(t)\|_{\H}^2\d t,&\text{ for }d=2,\\
%		\frac{K}{\epsilon}+\frac{4K\vartheta}{\mu}+ 4\int_0^T\|\f(t)\|_{\H}^2\d t,&\text{ for }d=3,\end{array}\right. + \frac{\vartheta K}{\mu}
	\nonumber\\&\leq \frac{2\beta}{r+1}\|\u(\epsilon)\|_{\wi\L^{r+1}}^{r+1}+\left\{\begin{array}{cc}\left(\alpha+\frac{\vartheta}{\mu}+\frac{2}{\epsilon}\right)K+3\int_0^T\|\f(t)\|_{\H}^2\d t,&\text{ for }d=2,\\
		\left( \alpha+\frac{7\vartheta}{\mu}+\frac{2}{\epsilon}\right)K+6\int_0^T\|\f(t)\|_{\H}^2\d t,&\text{ for }d=3, \end{array}\right.
\end{align}
where we have used \eqref{02} and \eqref{05}. 
Note that by change of variable technique, we have
\begin{align}
	\int_0^t	\int_{\tau}^t\left\|\frac{\d\u(s)}{\d t}\right\|_{\H}^2\d s \d \tau =  \int_{0}^t s \left\|\frac{\d\u(s)}{\d t}\right\|_{\H}^2 \d s \geq \int_{\epsilon}^ts\left\|\frac{\d\u(s)}{\d t}\right\|_{\H}^2\d s.
\end{align} 
Therefore, integrating the inequality \eqref{5p48} over $\epsilon\in(0,t)$ results to 
\begin{align}\label{09}
		\int_{\epsilon}^ts\left\|\frac{\d\u(s)}{\d t}\right\|_{\H}^2\d s &\leq\frac{2\beta}{r+1}\int_0^t\|\u(\epsilon)\|_{\wi\L^{r+1}}^{r+1}\d\epsilon+\left\{\begin{array}{lc}\left(\alpha+\frac{\vartheta}{\mu}+\frac{2}{ \epsilon}\right)Kt+3t\int_0^T\|\f(t)\|_{\H}^2\d t,&\text{ for }d=2,\\
		\left( \alpha+\frac{7\vartheta}{\mu}+\frac{2}{\epsilon}\right)Kt+6t\int_0^T\|\f(t)\|_{\H}^2\d t,&\text{ for }d=3, \end{array}\right.\nonumber\\&\leq \left\{\begin{array}{lc}\left(\alpha+\frac{\vartheta}{\mu}+\frac{2}{ \epsilon}+\frac{1}{(r+1)t}\right)Kt+3t\int_0^T\|\f(t)\|_{\H}^2\d t,&\text{ for }d=2,\\
		\left( \alpha+\frac{7\vartheta}{\mu}+\frac{2}{ \epsilon}+\frac{1}{(r+1)t}\right)Kt+6t\int_0^T\|\f(t)\|_{\H}^2\d t,&\text{ for }d=3. \end{array}\right.
\end{align} 
Since 
$	\int_{\epsilon}^t\epsilon\left\|\frac{\d\u(s)}{\d t}\right\|_{\H}^2\d s\leq 	\int_{\epsilon}^ts\left\|\frac{\d\u(s)}{\d t}\right\|_{\H}^2\d s,$ from \eqref{09}, we also have 
\begin{align}\label{010}
	\int_{\epsilon}^t\left\|\frac{\d\u(s)}{\d t}\right\|_{\H}^2\d s\leq\frac{1}{\epsilon}\times \left\{\begin{array}{lc}\left(\alpha+\frac{\vartheta}{\mu}+\frac{2}{ \epsilon}+\frac{1}{(r+1)t}\right)Kt+3t\int_0^T\|\f(t)\|_{\H}^2\d t,&\text{ for }d=2,\\
		\left( \alpha+\frac{7\vartheta}{\mu}+\frac{2}{ \epsilon}+\frac{1}{(r+1)t}\right)Kt+6t\int_0^T\|\f(t)\|_{\H}^2\d t,&\text{ for }d=3. \end{array}\right.
\end{align}
so that $\frac{\d\u}{\d t}\in\mathrm{L}^2(\epsilon,T;\H)$ for any $\epsilon>0$. 
		Therefore, one can conclude that $\u\in\C((0,T];\V)\cap\mathrm{L}^2(\epsilon,T;\D(\A))\cap\mathrm{L}^{r+1}(\epsilon,T;\wi\L^{3(r+1)})$ for any $\epsilon>0$.

	Let us define $\v=\frac{\d \u}{\d t}$ and differentiating \eqref{555} with respect to $t$, we find that $\v(\cdot)$ satisfies 
		\begin{align}\label{324}
			\frac{\d  \v(t)}{\d  t}+\mu \A\v(t)+\B(\u(t),\v(t))+\B(\v(t),\u(t))+\alpha\v(t)+\beta\mathcal{C}'(\u(t))\v(t)&=\boldsymbol{f}_t,
		\end{align}
		for a.e. $t\in[\epsilon, T]$. Taking the inner product with $\v(\cdot)$ in the above equation, we obtain for a.e. $t\in[\epsilon, T]$
		\begin{align}\label{325}
			&	\frac{1}{2}\frac{\d}{\d t}\|\v(t)\|_{\H}^2+\mu\|\nabla\v(t)\|_{\H}^2+\alpha\|\v(t)\|_{\H}^2+\beta\||\u(t)|^{\frac{r-1}{2}}|\v(t)|\|_{\H}^2+(r-1)\beta\||\u(t)|^{\frac{r-3}{2}}(\u(t)\cdot\v(t))\|_{\H}^2\nonumber\\&=-(\B(\v(t),\u(t)),\v(t))+(\f_t(t),\v(t)) \nonumber\\&\leq\|\nabla\v(t)\|_{\H}\||\u(t)||\v(t)|\|_{\H}+\|\f_t(t)\|_{\H}\|\v(t)\|_{\H}\nonumber\\&\leq\frac{\mu}{2}\|\nabla\v(t)\|_{\H}^2+\frac{\beta}{2}\||\u(t)|^{\frac{r-1}{2}}|\v(t)|\|_{\H}^2+\frac{2\vartheta}{\mu}\|\v(t)\|_{\H}^2+\frac{\mu}{4\vartheta}\|\f_t(t)\|_{\H}^2,
		\end{align}
	where we have performed a calculation similar to \eqref{347}. 	Integrating the above inequality from $\epsilon_1$ to $t$, we deduce 
		\begin{align}\label{5p51}
			&\|\v(t)\|_{\H}^2+\mu\int_{\epsilon_1}^t\|\nabla\v(s)\|_{\H}^2\d s+2\alpha\int_{\epsilon_1}^t\|\v(s)\|_{\H}^2\d s+\beta\int_{\epsilon_1}^t\||\u(s)|^{\frac{r-1}{2}}|\v(s)|\|_{\H}^2\d s\nonumber\\&\quad+2(r-1)\beta\int_{\epsilon_1}^t\||\u(s)|^{\frac{r-3}{2}}(\u(s)\cdot\v(s))\|_{\H}^2\d s\nonumber\\&\leq\|\v(\epsilon_1)\|_{\H}^2+\frac{4\vartheta}{\mu}\int_{\epsilon_1}^t\|\v(s)\|_{\H}^2\d s+\frac{\mu}{2\vartheta}\int_{\e_1}^t\|\f_t(s)\|_{\H}^2\d s.
		\end{align}
		Once again integrating with respect to $\epsilon_1$ from $\epsilon$ to $t$ in \eqref{5p51}, we arrive at 
		\begin{align}\label{327}
			\|\v(t)\|_{\H}^2&\leq \frac{1}{(t-\epsilon)}\int_{\epsilon}^t\|\v(\epsilon_1)\|_{\H}^2\d\epsilon_1 +\frac{4\vartheta}{\mu} \int_{\epsilon}^t\|\v(s)\|_{\H}^2\d s+ \frac{\mu}{2\vartheta}\int_{\epsilon}^t\|\f_t(s)\|_{\H}^2\d s\nonumber\\&\leq \frac{\mu}{2\vartheta}\int_0^T\|\f_t(t)\|_{\H}^2\d t
			\nonumber\\&\quad+ \frac{1}{\epsilon}\left(\frac{1}{(t-\epsilon)}+\frac{4\vartheta}{\mu} \right) \times \left\{\begin{array}{lc}\left(\alpha+\frac{\vartheta}{\mu}+\frac{2}{ \epsilon}+\frac{1}{(r+1)t}\right)Kt+3t\int_0^T\|\f(t)\|_{\H}^2\d t,&\text{ for }d=2,\\
				\left( \alpha+\frac{7\vartheta}{\mu}+\frac{2}{ \epsilon}+\frac{1}{(r+1)t}\right)Kt+6t\int_0^T\|\f(t)\|_{\H}^2\d t,&\text{ for }d=3. \end{array}\right.
		\end{align}
		for all $0<\epsilon<\epsilon_1<t<T$, where we have used \eqref{010} also. Therefore,  we deduce from \eqref{5p51} that 
		\begin{align}\label{011}
		&\mu	\int_{\epsilon_1}^t\|\nabla\v(s)\|_{\H}^2\d s+2\alpha	\int_{\epsilon_1}^t\|\v(s)\|_{\H}^2\d s+\beta\int_{\epsilon_1}^t\||\u(s)|^{\frac{r-1}{2}}|\v(s)|\|_{\H}^2\d s\nonumber\\&\quad+2(r-1)\beta\int_{\epsilon_1}^t\||\u(s)|^{\frac{r-3}{2}}(\u(s)\cdot\v(s))\|_{\H}^2\d s\nonumber\\&\leq \frac{\mu}{2\vartheta} \int_0^T\|\f_t(t)\|_{\H}^2\d t\nonumber\\&\quad+\frac{1}{\epsilon}\left(\frac{1}{(t-\epsilon)}+\frac{4\vartheta}{\mu} \right) \times \left\{\begin{array}{lc}\left(\alpha+\frac{\vartheta}{\mu}+\frac{2}{ \epsilon}+\frac{1}{(r+1)t}\right)Kt+3t\int_0^T\|\f(t)\|_{\H}^2\d t,&\text{ for }d=2,\\
			\left( \alpha+\frac{7\vartheta}{\mu}+\frac{2}{ \epsilon}+\frac{1}{(r+1)t}\right)Kt+6t\int_0^T\|\f(t)\|_{\H}^2\d t,&\text{ for }d=3. \end{array}\right.
		\end{align}
	for all $0<\epsilon<\epsilon_1<t<T$. 
	
	For $d=3$ and $r\geq 5$, taking the inner product with $\frac{\d\v}{\d t}$ to the first equation in \eqref{324}, we find 
	\begin{align}\label{329}
	&	\left\|\frac{\d\v(t)}{\d t}\right\|_{\H}^2+\frac{\mu}{2}\frac{\d}{\d t}\|\nabla\v(t)\|_{\H}^2+\frac{\alpha}{2}\frac{\d}{\d t}\|\v(t)\|_{\H}^2+\frac{\beta}{2}\frac{\d}{\d t}\||\u(t)|^{\frac{r-1}{2}}\v(t)\|_{\H}^2\nonumber\\&\quad+\frac{\beta(r-1)}{2}\frac{\d}{\d t}\||\u(t)|^{\frac{r-3}{2}}|\u(t)\cdot\v(t)|\|_{\H}^2\nonumber\\&=-\left(\B(\u(t),\v(t)),\frac{\d\v(t)}{\d t}\right)-\left(\B(\v(t),\u(t)),\frac{\d\v(t)}{\d t}\right)+\frac{\beta}{2}\left(\frac{\d}{\d t}(|\u(t)|^{r-1}),|\v(t)|^2\right)\nonumber\\&\quad+\frac{\beta(r-1)}{2}\left(\frac{\d}{\d t}(|\u(t)|^{r-3}),|\u(t)\cdot\v(t)|^2\right)+\left(\f_t(t),\frac{\d\v(t)}{\d t}\right),
	\end{align}
for a.e. $t\in[0,T]$. We estimate $\left(\B(\u,\v),\frac{\d\v}{\d t}\right)+\left(\B(\v,\u),\frac{\d\v}{\d t}\right)$ using \eqref{b4}, H\"older's, Agmon's and Young's inequalities as 
\begin{align}\label{330}
	\left|\left(\B(\u,\v),\frac{\d\v}{\d t}\right)+\left(\B(\v,\u),\frac{\d\v}{\d t}\right)\right|
	&\leq \|\u\|_{\wi\L^{\infty}}\|\nabla\v\|_{\H}\left\|\frac{\d\v}{\d t}\right\|_{\H}+C\|\v\|_{\V}\|\u\|_{\V}^{1/2}\|\u\|_{\H^2}^{1/2}\left\|\frac{\d\v}{\d t}\right\|_{\H}
	\nonumber\\&\leq C\|\u\|_{\V}^{1/2}\|\u\|_{\H^2}^{1/2}\|\v\|_{\V}\left\|\frac{\d\v}{\d t}\right\|_{\H}
	\nonumber\\ & \leq\frac{1}{4}\left\|\frac{\d\v}{\d t}\right\|_{\H}^2+C\|\u\|_{\V}\|\u\|_{\H^2}\|\v\|_{\V}^2
	\nonumber\\ & \leq\frac{1}{4}\left\|\frac{\d\v}{\d t}\right\|_{\H}^2+C\|\u\|_{\V}\|\u\|_{\H}\|\v\|_{\V}^2 +C\|\u\|_{\V}\|\A\u\|_{\H}\|\v\|_{\V}^2
	\nonumber\\ & \leq\frac{1}{4}\left\|\frac{\d\v}{\d t}\right\|_{\H}^2+C\bigg[\|\u\|^2_{\H}+\|\nabla\u\|^2_{\H} +\|\A\u\|^2_{\H}\bigg]\|\v\|_{\V}^2. 
\end{align}
It can be easily seen that 
\begin{align}
	\left(\f_t,\frac{\d\v}{\d t}\right)\leq\|\f_t\|_{\H}\left\|\frac{\d\v}{\d t}\right\|_{\H}\leq\frac{1}{4}\left\|\frac{\d\v}{\d t}\right\|_{\H}^2+\|\f_t\|_{\H}^2.
\end{align}
We estimate the term $\frac{\beta}{2}\left(\frac{\d}{\d t}(|\u|^{r-1}),|\v|^2\right)+\frac{\beta(r-1)}{2}\left(\frac{\d}{\d t}(|\u|^{r-3}),|\u\cdot\v|^2\right),$ using H\"older's, interpolation, Gagliardo-Nirenberg's and Young's inequalities as 
\begin{align}\label{331}
&	\frac{\beta}{2}\left(\frac{\d}{\d t}(|\u|^{r-1}),|\v|^2\right)+\frac{\beta(r-1)}{2}\left(\frac{\d}{\d t}(|\u|^{r-3}),|\u\cdot\v|^2\right)\nonumber\\&=\frac{\beta(r-1)}{2}\left(|\u|^{r-3}(\u\cdot\v),|\v|^2\right)+\frac{\beta(r-1)(r-3)}{2}(|\u|^{r-5}(\u\cdot\v),|\u\cdot\v|^2)\nonumber\\&\leq \frac{\beta(r-1)}{2}\||\u|^{\frac{r-3}{2}}(\u\cdot\v)\|_{\H}\||\u|^{\frac{r-3}{2}}|\v|^2\|_{\H}\nonumber\\&\quad+\frac{\beta(r-1)(r-3)}{2}\||\u|^{\frac{r-3}{2}}(\u\cdot\v)\|_{\H}\||\u|^{\frac{r-5}{2}}|\v||\u\cdot\v|\|_{\H}\nonumber\\&\leq \frac{\beta(r-1)(r+1)}{2}\||\u|^{\frac{r-3}{2}}(\u\cdot\v)\|_{\H}\|\u\|_{\wi\L^{3(r+1)}}^{\frac{r-3}{2}}\|\v\|_{\wi\L^{\frac{6(r+1)}{r+3}}}^2\nonumber\\&\leq C\beta\||\u|^{\frac{r-3}{2}}(\u\cdot\v)\|_{\H}\|\u\|_{\wi\L^{3(r+1)}}^{\frac{r-3}{2}}\|\v\|_{\V}^{\frac{2r}{r+1}}\|\v\|_{\H}^{\frac{2}{r+1}}\nonumber\\&\leq \frac{\beta}{2}\||\u|^{\frac{r-3}{2}}(\u\cdot\v)\|_{\H}^2\|\v\|_{\V}^2+C\beta\|\u\|_{\wi\L^{3(r+1)}}^{r-3}\|\v\|_{\V}^{\frac{2(r-1)}{r+1}}\|\v\|_{\H}^{\frac{4}{r+1}}\nonumber\\&\leq \frac{\beta}{2}\||\u|^{\frac{r-3}{2}}(\u\cdot\v)\|_{\H}^2\|\v\|_{\V}^2+C\beta\|\u\|_{\wi\L^{3(r+1)}}^{\frac{(r-3)(r+1)}{r-1}}\|\v\|_{\V}^{2}+C\beta\|\v\|_{\H}^{2}.
\end{align}
Substituting \eqref{330} and \eqref{331} in \eqref{329} and then integrating from $\epsilon_2$ to $t$, we obtain 
\begin{align}\label{332}
	&	\int_{\epsilon_2}^t\left\|\frac{\d\v(s)}{\d t}\right\|_{\H}^2\d s+\mu\|\nabla\v(t)\|_{\H}^2+\alpha\|\v(t)\|_{\H}^2
	\nonumber\\ & \qquad +\beta\||\u(t)|^{\frac{r-1}{2}}|\v(t)|\|_{\H}^2+\beta(r-1)\||\u(t)|^{\frac{r-3}{2}}|\u(t)\cdot\v(t)|\|_{\H}^2\nonumber\\&\leq \mu\|\v(\epsilon_2)\|_{\V}^2+\alpha\|\v(\epsilon_2)\|_{\H}^2+\beta\||\u(\epsilon_2)|^{\frac{r-1}{2}}|\v(\epsilon_2)|\|_{\V}^2+\beta(r-1)\||\u(\epsilon_2)|^{\frac{r-3}{2}}|\u(\epsilon_2)\cdot\v(\epsilon_2)|\|_{\H}^2\nonumber\\&\quad+2\int_{\epsilon_2}^t\|\f_t(s)\|_{\H}^2\d s+C\int_{\epsilon_2}^t\bigg[\|\u(s)\|^2_{\V}+\|\A\u(s)\|^2_{\H}\bigg]\|\v(s)\|_{\V}^2\d s
	\nonumber\\ & \quad +\beta\int_{\epsilon_2}^t\||\u(s)|^{\frac{r-3}{2}}(\u(s)\cdot\v(s))\|_{\H}^2\|\v(s)\|_{\V}^2\d s\nonumber\\&\quad +C\beta\int_{\epsilon_2}^t\|\u(s)\|_{\wi\L^{3(r+1)}}^{\frac{(r-3)(r+1)}{r-1}}\|\v(s)\|_{\V}^{2}\d s+C\beta\int_{\epsilon_2}^t\|\v(s)\|_{\H}^{2}\d s,
\end{align}
for all $t\in[\epsilon_1,T]$. Integrating the above inequality over $\epsilon_2\in(\epsilon_1,t)$, we have 
\begin{align}\label{333}
	&\mu\|\nabla\v(t)\|_{\H}^2+\alpha\|\v(t)\|_{\H}^2+\beta\||\u(t)|^{\frac{r-1}{2}}|\v(t)|\|_{\H}^2+\beta(r-1)\||\u(t)|^{\frac{r-3}{2}}|\u(t)\cdot\v(t)|\|_{\H}^2\nonumber\\&\leq\frac{1}{\epsilon_1}\bigg[\mu\int_{\epsilon_1}^t\|\v(\epsilon_2)\|_{\V}^2\d\epsilon_2+\alpha\int_{\epsilon_1}^t\|\v(\epsilon_2)\|_{\H}^2\d\epsilon_2+\beta\int_{\epsilon_1}^t\||\u(\epsilon_2)|^{\frac{r-1}{2}}|\v(\epsilon_2)|\|_{\V}^2\d\epsilon_2\nonumber\\&\quad+\beta(r-1)\int_{\epsilon_1}^t\||\u(\epsilon_2)|^{\frac{r-3}{2}}|\u(\epsilon_2)\cdot\v(\epsilon_2)|\|_{\H}^2\d\epsilon_2\bigg]+2\int_{\epsilon_1}^t(s-\epsilon_1)\|\f_t(s)\|_{\H}^2\d s \nonumber\\&\quad+C\int_{\epsilon_1}^t(s-\epsilon_1)\bigg[\|\u(s)\|_{\V}^2+\|\A\u(s)\|^2_{\H}\bigg]\|\v(s)\|_{\V}^2\d s \nonumber\\ & \quad +\beta\int_{\epsilon_1}^t(s-\epsilon_1)\||\u(s)|^{\frac{r-3}{2}}(\u(s)\cdot\v(s))\|_{\H}^2\|\v(s)\|_{\V}^2\d s \nonumber\\&\quad+C\beta\int_{\epsilon_1}^t(s-\epsilon_1)\|\v(s)\|_{\H}^{2}\d s +C\beta\int_{\epsilon_1}^t(s-\epsilon_1)\|\u(s)\|_{\wi\L^{3(r+1)}}^{\frac{(r-3)(r+1)}{r-1}}\|\v(s)\|_{\V}^{2}\d s\nonumber\\&\leq\frac{1}{\epsilon}\left\{\frac{4}{\mu}\int_0^T\|\f_t(t)\|_{\V'}^2\d t +\frac{1}{\epsilon}\left(\frac{1}{(t-\epsilon)}+\frac{4\vartheta}{\mu} \right)\left( \alpha+\frac{7\vartheta}{\mu}+\frac{2}{\epsilon}+\frac{1}{(r+1)t}\right)Kt+6\int_0^T\|\f(t)\|_{\H}^2\d t\right\} \nonumber\\&\quad+2T\int_0^T\|\f_t(t)\|_{\H}^2\d t+CT\int_{\epsilon_1}^t\bigg[\|\u(s)\|_{\V}^2+\|\A\u(s)\|^2_{\H}\bigg]\|\v(s)\|_{\V}^2\d s \nonumber\\&\quad+\beta T\int_{\epsilon_1}^t\||\u(s)|^{\frac{r-3}{2}}(\u(s)\cdot\v(s))\|_{\H}^2\|\v(s)\|_{\V}^2\d s +C\beta T\int_{\epsilon_1}^t\|\u(s)\|_{\wi\L^{3(r+1)}}^{\frac{(r-3)(r+1)}{r-1}}\|\v(s)\|_{\V}^{2}\d s,
\end{align}
	for all $0<\epsilon<\epsilon_1<t<T$, where we have used \eqref{011} also. An application of Gronwall's inequality in \eqref{333} yields 
	\begin{align}\label{334}
	& \min\{\mu,\alpha\} \|\v(t)\|^2_{\V}  \nonumber\\&\leq\bigg[\frac{1}{\epsilon}\left\{\frac{4}{\mu}\int_0^T\|\f_t(t)\|_{\V'}^2\d t +\frac{1}{\epsilon}\left(\frac{1}{(t-\epsilon)}+\frac{4\vartheta}{\mu} \right)\left( \alpha+\frac{8\vartheta}{\mu}+\frac{2}{ t}+\frac{1}{(r+1)t}\right)Kt+6\int_0^T\|\f(t)\|_{\H}^2\d t\right\} 
		\nonumber\\&\quad+2T\int_0^T\|\f_t(t)\|_{\H}^2\d t\bigg]\exp\bigg\{CT^2\sup_{t\in[\epsilon_1,t]}\|\u(t)\|^2_{\V}+ CT\int_{\epsilon_1}^t\|\A\u(s)\|_{\H}^2\d s \nonumber\\&\qquad+\beta T\int_{\epsilon_1}^t\||\u(s)|^{\frac{r-3}{2}}|\u(s)\cdot\v(s)|\|_{\H}^2\d s+C\beta T^{\frac{r+1}{r-1}}\left(\int_{\epsilon_1}^t\|\u(s)\|_{\wi\L^{3(r+1)}}^{r+1}\d s\right)^{\frac{r-3}{r-1}}\bigg\}\leq C,
	\end{align}
	for all $0<\epsilon<\epsilon_1<t<T,$ and the right hand side is bounded by using \eqref{16}, \eqref{05} and \eqref{011}.

			\vskip 0.2cm
		\noindent 	\textbf{Step 3:} \textsl{Backward uniqueness:} 
		Let us now prove the backward uniqueness property. Let us first consider the case $d=2,3$ and $r\in(3,\infty)$.	Let $\u_1(\cdot)$ and $\u_2(\cdot)$ be two solutions of the system \eqref{555} with the same final data, say $\boldsymbol{\xi}$ and forcing $\f\in\W^{1,2}(0,T;\H)$.  Then $\u=\u_1-\u_2$ satisfies the following system in $\V'+\wi\L^{\frac{r+1}{r}}$ for a.e. $t\in[0,T]$ and in $\H$ for a.e. $t\in[\epsilon_1,T]$, for any $0<\epsilon<\epsilon_1<T$:
		\begin{align}\label{336}
			\left\{
			\begin{aligned}
				\frac{\d  \u(t)}{\d  t}+\mathcal{A}(\u(t))&=-[\B(\u_1(t),\u(t))+\B(\u(t),\u_2(t))]=:h(\u(t)), \\
				\u(T)&=\boldsymbol{0},
			\end{aligned}\right. 
		\end{align}
		where 
		\begin{align}
			\mathcal{A}(\u):=\mu\A\u+\alpha\u+\beta[\mathcal{C}(\u_1)-\mathcal{C}(\u_2)]=\mu\A\u+\alpha\u+\beta\int_0^1\mathcal{C}'(\theta\u+\u_2)\u\d\theta.
		\end{align}
	By the monotonicity of $\mathcal{C}(\cdot)$ (see \eqref{2.23}), we know that 
		\begin{align}\label{014}
			\langle\mathcal{A}(\u),\u\rangle &=\mu\|\nabla\u\|_{\H}^2+\alpha\|\u\|_{\H}^2+\langle\mathcal{C}(\u_1)-\mathcal{C}(\u_2),\u\rangle \geq \min\{\mu,\alpha\}\|\u\|_{\V}^2. 
		\end{align}
	If we denote $\partial_t\u=\v$, then 
	\begin{align*}
		\partial_t[\mathcal{A}(\u)]=\mu\A\v+\alpha\v+\beta\int_0^1\mathcal{C}'(\theta\u+\u_2)\v\d\theta+\beta\int_0^1\mathcal{C}''(\theta\u+\u_2)(\u\otimes(\theta\v+\partial_t\u_2))\d\theta. 
	\end{align*}
		It should be noted that 
		\begin{align}\label{19}
			\langle\partial_t[\mathcal{A}(\u)],\u\rangle=\langle\mathcal{A}(\u),\v\rangle+\beta\left\langle\int_0^1\mathcal{C}''(\theta\u+\u_2)(\u\otimes(\theta\v+\partial_t\u_2))\d\theta,\u\right\rangle,
		\end{align}
	since \begin{align*}
		\langle\mathcal{A}(\u),\v\rangle&=\langle\mu\A\u,\v\rangle+\alpha(\u,\v)+\beta\int_{\mathbb{T}^d}\int_0^1|\theta\u+\u_2|^{r-1}\d\theta\u\cdot\v\d x\nonumber\\&\quad+\beta(r-1)\int_{\mathbb{T}^d}\int_0^1|\theta\u+\u_2|^{r-3}((\theta\u+\u_2)\cdot\v)((\theta\u+\u_2)\cdot\u)\d\theta\d x\nonumber\\&=\left\langle\u,\mu\A\v+\alpha\v+\beta\int_{\mathbb{T}^d}\int_0^1|\theta\u+\u_2|^{r-1}\d\theta\v\d x\right.\nonumber\\&\qquad\left.+\beta(r-1)\int_{\mathbb{T}^d}\int_0^1(\theta\u+\u_2)|\theta\u+\u_2|^{r-3}((\theta\u+\u_2)\cdot\v)\d\theta\d x\right\rangle\nonumber\\&=\left\langle\u,\mu\A\v+\alpha\v+\beta\int_0^1\mathcal{C}'(\theta\u+\u_2)\v\d\theta\right\rangle.
	\end{align*}
		Our aim is to show that if $\u(T)=\boldsymbol{0}$, then $\u(t)=\boldsymbol{0}$ for all $t\in[0,T]$. We prove this result by a contradiction first in the interval $[\epsilon_1,T,]$ for any $\epsilon_1>0$ and then by using  the continuity of $\|\u(t)\|_{\H}$ in $[0,T]$ and  the arbitrariness of $\epsilon_1$, one can obtain the required result in $[0,T]$. Assume that there exists some $t_0\in[\epsilon_1,T)$ such that $\u(t_0)\neq \boldsymbol{0}$. Since the mapping $t\mapsto\|\u(t)\|_{\H}$ is continuous, the following alternative holds: 
		\begin{enumerate}
			\item [(i)] for all $t\in[t_0,T]$, $\|\u(t)\|_{\H}>0$ or
			\item [(ii)] there exists a $t_1\in(t_0,T)$ such that for all $t\in(t_0,t_1)$, $\|\u(t)\|_{\H}>0$ and $\u(t_1)=\boldsymbol{0}$. 
		\end{enumerate}
		In the second case, denote by $\Lambda(t)$, the ratio 
		\begin{align}\label{339}
			\Lambda(t)=\frac{\langle\mathcal{A}(\u(t)),\u(t)\rangle}{\|\u(t)\|_{\H}^2}\geq \min\{\mu,\alpha\} \frac{\|\u(t)\|_{\V}^2}{\|\u(t)\|_{\H}^2},
		\end{align}
where we have used \eqref{014}. The ratio $\frac{\|\u\|_{\V}^2}{\|\u\|_{\H}^2}$ is known as ``Dirichlet's quotient'' (\cite{JCR5}).	Therefore, we have the following equation for $\Lambda(t)$:
		\begin{align}\label{33}
			\frac{\d\Lambda}{\d t}&=\frac{\langle\partial_t[\mathcal{A}(\u)],\u\rangle}{\|\u\|_{\H}^2}+\frac{\langle\mathcal{A}(\u)-2\Lambda\u,\partial_t\u\rangle}{\|\u\|_{\H}^2}\nonumber\\&=\frac{\beta}{\|\u\|_{\H}^2}\left\langle\int_0^1\mathcal{C}''(\theta\u+\u_2)(\u\otimes(\theta\partial_t\u+\partial_t\u_2))\d\theta,\u\right\rangle+2\frac{\langle\mathcal{A}(\u)-\Lambda\u,\partial_t\u\rangle}{\|\u\|_{\H}^2},
		\end{align}
		where we have used \eqref{19} also. 	Since $\langle\mathcal{A}(\u)-\Lambda\u,\u\rangle=0$ and $\partial_t\u=-\mathcal{A}(\u)+h(\u)$, it follows that 
		\begin{align}\label{016}
			&	\frac{1}{2}\frac{\d\Lambda}{\d t}+\frac{\|\mathcal{A}(\u)-\Lambda\u\|_{\H}^2}{\|\u\|_{\H}^2}\nonumber\\&=\frac{\beta}{2\|\u\|_{\H}^2}\left\langle\int_0^1\mathcal{C}''(\theta\u+\u_2)(\u\otimes(\theta\partial_t\u_1+(1-\theta)\partial_t\u_2))\d\theta,\u\right\rangle+\frac{\langle \mathcal{A}(\u)-\Lambda\u,h(\u)\rangle}{\|\u\|_{\H}^2}\nonumber\\&\leq\frac{\beta}{2\|\u\|_{\H}^2}\int_0^1\|\mathcal{C}''(\theta\u_1+(1-\theta)\u_2)(\u\otimes(\theta\partial_t\u_1+(1-\theta)\partial_t\u_2))\|_{\V'}\d\theta\|\u\|_{\V}\nonumber\\&\quad+\frac{1}{2}\frac{\|\mathcal{A}(\cdot)\u-\Lambda\u\|_{\H}^2}{\|\u\|_{\H}^2}+\frac{1}{2}\frac{\|h(\u)\|_{\H}^2}{\|\u\|_{\H}^2}. 
		\end{align}
		For $d=2$, $r\in(3,\infty)$ and $d=3$, $r\in(3,5]$,	we estimate  the term $$\|\mathcal{C}''(\theta\u_1+(1-\theta)\u_2)(\u\otimes(\theta\partial_t\u+\partial_t\u_2))\|_{\V'}$$ from \eqref{016} using H\"older's and Sobolev's inequalities  as 
		\begin{align}\label{017}
		&\|\mathcal{C}''(\theta\u_1+(1-\theta)\u_2)(\u\otimes(\theta\partial_t\u_1+(1-\theta)\partial_t\u_2))\|_{\V'}\nonumber\\&\leq\|\mathcal{C}''(\theta\u_1+(1-\theta)\u_2)(\u\otimes\partial_t\u_1)\|_{\V'}+\|\mathcal{C}''(\theta\u_1+(1-\theta)\u_2)(\u\otimes\partial_t\u_2)\|_{\V'}\nonumber\\&\leq C\|\mathcal{C}''(\theta\u_1+(1-\theta)\u_2)(\u\otimes\partial_t\u_1)\|_{\wi\L^{\frac{r+1}{r}}}+C\|\mathcal{C}''(\theta\u_1+(1-\theta)\u_2)(\u\otimes\partial_t\u_2)\|_{\wi\L^{\frac{r+1}{r}}}
			\nonumber\\&\leq C\left(\|\u_1\|_{\wi\L^{r+1}}+\|\u_2\|_{\wi\L^{r+1}}\right)^{r-2}\|\u\|_{\wi\L^{r+1}}\|\partial_t\u_1\|_{\wi\L^{r+1}}+C\left(\|\u_1\|_{\wi\L^{r+1}}+\|\u_2\|_{\wi\L^{r+1}}\right)^{r-2}\|\u\|_{\wi\L^{r+1}}\|\partial_t\u_2\|_{\wi\L^{r+1}}\nonumber\\&\leq C\left(\|\u_1\|_{\wi\L^{r+1}}^{r-2}+\|\u_2\|_{\wi\L^{r+1}}^{r-2}\right)\left(\|\partial_t\u_1\|_{\V}+\|\partial_t\u_2\|_{\V}\right)\|\u\|_{\V},
		\end{align}
		for all $0<\theta<1$. For $d=3$ and $r\in(5,\infty)$, we estimate $\|\mathcal{C}''(\theta\u_1+(1-\theta)\u_2)(\u\otimes(\theta\partial_t\u+\partial_t\u_2))\|_{\V'}$ using Gagliardo-Nirenberg's H\"older's and interpolation inequalities as 
		\begin{align}\label{017a}
		&\|\mathcal{C}''(\theta\u_1+(1-\theta)\u_2)(\u\otimes(\theta\partial_t\u_1+(1-\theta)\partial_t\u_2))\|_{\V'}\nonumber\\	&\leq C\|\mathcal{C}''(\theta\u_1+(1-\theta)\u_2)(\u\otimes\partial_t\u_1)\|_{\wi\L^{\frac{6}{5}}}+C\|\mathcal{C}''(\theta\u_1+(1-\theta)\u_2)(\u\otimes\partial_t\u_2)\|_{\wi\L^{\frac{6}{5}}}
			\nonumber\\&\leq C\left(\|\u_1\|_{\wi\L^{2(r-2)}}+\|\u_2\|_{\wi\L^{2(r-2)}}\right)^{r-2}\|\u\|_{\wi\L^6}\|\partial_t\u_1\|_{\wi\L^6}+C\left(\|\u_1\|_{\wi\L^{2(r-2)}}+\|\u_2\|_{\wi\L^{2(r-2)}}\right)^{r-2}\|\u\|_{\wi\L^6}\|\partial_t\u_2\|_{\wi\L^6}\nonumber\\&\leq C\left(\|\u_1\|_{\wi\L^{r+1}}^{\frac{r+7}{4}}\|\u_1\|_{\wi\L^{3(r+1)}}^{\frac{3(r-5)}{4}}+\|\u_2\|_{\wi\L^{r+1}}^{\frac{r+7}{4}}\|\u_2\|_{\wi\L^{3(r+1)}}^{\frac{3(r-5)}{4}}\right)\left(\|\partial_t\u_1\|_{\V}+\|\partial_t\u_2\|_{\V}\right)\|\u\|_{\V}.
		\end{align}
		For $d=2$, using H\"older's,  Agmon's and Young's inequalities, and \eqref{b2}, we have  \begin{align}\label{018}
			\|h(\u)\|_{\H}^2&\leq\|\B(\u_1,\u)+\B(\u,\u_2)\|_{\H}^2\nonumber\\&\leq C\|\u_1\|_{\wi\L^{\infty}}^2\|\u\|_{\V}^2+C\|\u\|_{\H}\|\u\|_{\V}\|\u_2\|_{\H}\|\u_2\|_{\H^2}\nonumber\\&\leq C\|\u_1\|_{\H}\|\u_1\|_{\H^2}\|\u\|_{\V}^2+C\|\u_2\|_{\H}\|\u_2\|_{\H^2}\|\u\|_{\V}^2.
		\end{align}
		For $d=3$, a similar calculation yields (see \eqref{b4})
		\begin{align}\label{019}
			\|h(\u)\|_{\H}^2&\leq C\left(\|\u_1\|_{\V}\|\u_1\|_{\H^2}+C\|\u_2\|_{\V}\|\u_2\|_{\H^2}\right)\|\u\|_{\V}^2.
		\end{align}
		Therefore, for $d=2$, $r\in(3,\infty)$ and $d=3$, $r\in(3,5]$,		using \eqref{017}-\eqref{019} in \eqref{016}, we deduce 
		\begin{align}\label{4p7}
				\frac{\d\Lambda}{\d t}+\frac{\|\mathcal{A}(\u)-\Lambda\u\|_{\H}^2}{\|\u\|_{\H}^2} &\leq C\beta\left(\|\u_1\|_{\wi\L^{r+1}}^{r-2}+\|\u_2\|_{\wi\L^{r+1}}^{r-2}\right)\left(\|\partial_t\u_1\|_{\V}+\|\partial_t\u_2\|_{\V}\right)\frac{\|\u\|_{\V}^2}{\|\u\|_{\H}^2}\nonumber\\&\quad+C\frac{\|\u\|_{\V}^2}{\|\u\|_{\H}^2}\left\{\begin{array}{ll}\left(\|\u_1\|_{\H}\|\u_1\|_{\H^2}+C\|\u_2\|_{\H}\|\u_2\|_{\H^2}\right)&\text{ for }d=2,\\ \left(\|\u_1\|_{\V}\|\u_1\|_{\H^2}+C\|\u_2\|_{\V}\|\u_2\|_{\H^2}\right)&\text{ for }d=3,
			\end{array}\right.\nonumber\\&\leq \frac{C\beta}{\min\{\mu,\alpha\}}\Lambda\left(\|\u_1\|_{\wi\L^{r+1}}^{r-2}+\|\u_2\|_{\wi\L^{r+1}}^{r-2}\right)\left(\|\partial_t\u_1\|_{\V}+\|\partial_t\u_2\|_{\V}\right)\nonumber\\&\quad+\frac{C}{\min\{\mu,\alpha\}}\Lambda\left\{\begin{array}{ll}\left(\|\u_1\|_{\H}\|\u_1\|_{\H^2}+C\|\u_2\|_{\H}\|\u_2\|_{\H^2}\right)&\text{ for }d=2,\\ \left(\|\u_1\|_{\V}\|\u_1\|_{\H^2}+C\|\u_2\|_{\V}\|\u_2\|_{\H^2}\right)&\text{ for }d=3,\end{array}\right.
		\end{align}
		where we have used \eqref{014} also. The variation of constants formula results to
		\begin{align}\label{020}
			\Lambda(t)\leq\Lambda(t_0)\left\{\begin{array}{ll}\exp\left[\frac{C\beta T^{\frac{1}{2}}}{\min\{\mu,\alpha\}}\sum\limits_{i=1}^2\sup\limits_{t\in[t_0,T]}\|\u_i(t)\|_{\V}^{r-2}\left(\int_{t_0}^T\|\partial_t\u_i(t)\|_{\V}^2\d t\right)^{1/2}\right]&\\\times\exp\left[\frac{CT^{\frac{1}{2}}}{\min\{\mu,\alpha\}}\sum\limits_{i=1}^2\sup\limits_{t\in[0,T]}\|\u_i(t)\|_{\H}\left(\int_{t_0}^T\|\u_i(t)\|_{\H^2}^2\d t\right)^{1/2}\right],&\text{ for }d=2, \\
				\exp\left[\frac{C\beta T^{\frac{1}{2}}}{\min\{\mu,\alpha\}}\sum\limits_{i=1}^2\sup\limits_{t\in[t_0,T]}\|\u_i(t)\|_{\V}^{r-2}\left(\int_{t_0}^T\|\partial_t\u_i(t)\|_{\V}^2\d t\right)^{1/2}\right]&\\\times\exp\left[\frac{CT^{\frac{1}{2}}}{\min\{\mu,\alpha\}}\sum\limits_{i=1}^2\sup\limits_{t\in[t_0,T]}\|\u_i(t)\|_{\V}\left(\int_{t_0}^T\|\u_i(t)\|_{\H^2}^2\d t\right)^{1/2}\right],&\text{ for }d=3. \end{array}\right. 
		\end{align}
	Using the estimates \eqref{02}, \eqref{16}, \eqref{05} and \eqref{011}, one can easily see that the right hand side of \eqref{020} is finite. 
	
	For $d=3$ and $r\in(5,\infty)$, by using the estimates \eqref{017a} and \eqref{019} in \eqref{016}, we  have 
			\begin{align*}
				\frac{\d\Lambda}{\d t}&\leq\frac{C\Lambda}{\min\{\mu,\alpha\}}\left(\|\u_1\|_{\V}\|\u_1\|_{\H^2}+C\|\u_2\|_{\V}\|\u_2\|_{\H^2}\right)\nonumber\\&\quad+\frac{C\beta\Lambda}{\min\{\mu,\alpha\}}\left(\|\u_1\|_{\wi\L^{r+1}}^{\frac{r+7}{4}}\|\u_1\|_{\wi\L^{3(r+1)}}^{\frac{3(r-5)}{4}}+\|\u_2\|_{\wi\L^{r+1}}^{\frac{r+7}{4}}\|\u_2\|_{\wi\L^{3(r+1)}}^{\frac{3(r-5)}{4}}\right)\|\partial_t\u\|_{\V}.
		\end{align*}
	Once again using the variation of constants formula, we arrive at 
	\begin{align}\label{3p47}
		\Lambda(t)&\leq\Lambda(t_0)\exp 	\left[\frac{C\beta T^{\frac{r+19}{4(r+1)}}}{\min\{\mu,\alpha\}}\sum\limits_{i=1}^2\sup\limits_{t\in[t_0,T]}\|\u_i(t)\|_{\wi\L^{r+1}}^{\frac{r+7}{4}}\left(\int_{t_0}^T\|\u_i(t)\|_{\wi\L^{3(r+1)}}^{r+1}\d t\right)^{\frac{3(r-5)}{4(r+1)}}\sup_{t\in[t_0,T]}\|\partial_t\u_i(t)\|_{\V}\right]\nonumber\\&\quad\times\exp\left[\frac{CT^{\frac{1}{2}}}{\min\{\mu,\alpha\}}\sum\limits_{i=1}^2\sup\limits_{t\in[t_0,T]}\|\u_i(t)\|_{\V}\left(\int_{t_0}^T\|\u_i(t)\|_{\H^2}^2\d t\right)^{1/2}\right].
	\end{align}
Using the estimates \eqref{03}, \eqref{05}, \eqref{011} and \eqref{334}, one can easily see that the right hand side of \eqref{3p47} is finite.

		On the other hand, we have 
		\begin{align}\label{023}
			-\frac{1}{2\|\u(t)\|_{\H}^2}\frac{\d}{\d t}\|\u(t)\|_{\H}^2=-\frac{\langle\u(t),\partial_t\u(t)\rangle}{\|\u(t)\|_{\H}^2}=\Lambda(t)-\frac{\langle h(\u(t)),\u(t)\rangle}{\|\u(t)\|_{\H}^2}. 
		\end{align}
		An application of H\"older's, Ladyzhenskaya's and Young's inequalities, and \eqref{339}  yield
		\begin{align}\label{43}
			\frac{|\langle h(\u),\u\rangle|}{\|\u\|_{\H}^2}&\leq \frac{\|h(\u)\|_{\V'}\|\u\|_{\V}}{\|\u\|_{\H}^2}\leq\frac{\left(\|\u_1\|_{\wi\L^4}+\|\u_2\|_{\wi\L^4}\right)\|\u\|_{\wi\L^4}\|\u\|_{\V}}{\|\u\|_{\H}^2}\nonumber\\&\leq\frac{2^{\frac{d-1}{4}}\left(\|\u_1\|_{\wi\L^4}+\|\u_2\|_{\wi\L^4}\right)\|\u\|_{\V}^{1+\frac{d}{4}}\|\u\|_{\H}^{1-\frac{d}{4}}}{\|\u\|_{\H}^{2}}\leq 2^{\frac{d-1}{4}}\left(\|\u_1\|_{\wi\L^4}+\|\u_2\|_{\wi\L^4}\right)\left(\frac{\Lambda}{\min\{\mu,\alpha\}}\right)^{\frac{4+d}{8}} \nonumber\\&\leq\Lambda+\frac{C}{\min\{\mu,\alpha\}^{\frac{4+d}{4-d}}}\left(\|\u_1\|_{\wi\L^4}^{\frac{8}{4-d}}+\|\u_2\|_{\wi\L^4}^{\frac{8}{4-d}}\right). 
		\end{align}
		Therefore, from \eqref{023}, we easily have 
		\begin{align}\label{025}
			-\frac{\d}{\d t}\log\|\u(t)\|_{\H}\leq 2\Lambda(t)+\frac{C}{\min\{\mu,\alpha\}^{\frac{4+d}{4-d}}}\left(\|\u_1(t)\|_{\wi\L^4}^{\frac{8}{4-d}}+\|\u_2(t)\|_{\wi\L^4}^{\frac{8}{4-d}}\right). 
		\end{align}
		According to \eqref{020} and \eqref{16}, the right hand side of \eqref{025} is integrable on $(t_0,t_1)$ and this contradicts the fact that $\u(t_1)=\boldsymbol{0}$. Thus we are in the case (i) of the alternative and backward uniqueness result follows. 
		
For the case, $d=3$ and $2\beta\mu\geq 1$, one can show the backward uniqueness in a similar way as in the previous case.

For the case, $d=2$ and $r\in[1,3]$, we slightly modify the system \eqref{336} and consider 
	\begin{equation}\label{353}
		\left\{
		\begin{aligned}
			\frac{\d  \u(t)}{\d  t}+\widetilde{\mathcal{A}}\u(t)&=-\bigg[\B(\u_1(t),\u(t))+\B(\u(t),\u_2(t))\\&\quad+\beta\int_0^1\mathcal{C}'(\theta\u_1(t)+(1-\theta)\u_2(t))\d\theta\u(t)\bigg]=:\widetilde{h}(\u(t)), \\
			\u(T)&=\boldsymbol{0},
		\end{aligned}\right. 
	\end{equation}
where $\widetilde{\mathcal{A}}\u=\mu\A\u+\alpha\u$. 	In the this case, we denote $\widetilde{\Lambda}(t)$, the ratio 
\begin{align*}
	\widetilde{\Lambda}(t)=\frac{\langle\widetilde{\mathcal{A}}\u(t),\u(t)\rangle}{\|\u(t)\|_{\H}^2}=\frac{\mu\|\nabla\u(t)\|_{\H}^2+\alpha\|\u(t)\|_{\H}^2}{\|\u(t)\|_{\H}^2}\geq \min\{\mu,\alpha\}\frac{\|\u(t)\|_{\V}^2}{\|\u(t)\|_{\H}^2}.
\end{align*}
Calculations similar to \eqref{33} and \eqref{016} provide
\begin{align}\label{46}
	\frac{1}{2}\frac{\d\widetilde{\Lambda}}{\d t}+\frac{\|\widetilde{\mathcal{A}}\u-\widetilde{\Lambda}\u\|_{\H}^2}{\|\u\|_{\H}^2}&=\frac{\langle\widetilde{\mathcal{A}}\u-\widetilde{\Lambda}\u,\widetilde{h}(\u)\rangle}{\|\u\|_{\H}^2}\leq\frac{1}{2}\frac{\|\widetilde{\mathcal{A}}\u-\widetilde{\Lambda}\u\|_{\H}^2}{\|\u\|_{\H}^2}+\frac{1}{2}\frac{\|\widetilde{h}(\u)\|_{\H}^2}{\|\u\|_{\H}^2}. 
\end{align} 
A bound similar to \eqref{020} can be obtained by demonstrating bounds akin to \eqref{018} and \eqref{43}. Using H\"older's, Gagliardo-Nirenberg's, Sobolev's and Young's inequalities,  we find 
\begin{align}\label{47}
\left\|\int_0^1\mathcal{C}'(\theta\u_1+(1-\theta)\u_2)\d\theta\u\right\|_{\H}^2&\leq C\left(\|\u_1\|_{\wi\L^{2(r+1)}}^{2(r-1)}+\|\u_2\|_{\wi\L^{2(r+1)}}^{2(r-1)}\right) \|\u\|_{\wi\L^{r+1}}^2\nonumber\\&\leq C\left(\|\u_1\|_{\V}^{2(r-1)}+\|\u_2\|_{\V}^{2(r-1)}\right) \|\u\|_{\V}^2,
\end{align}
and 
\begin{align}\label{48}
\frac{1}{\|\u\|_{\H}^2}\left|	\left\langle\int_0^1\mathcal{C}'(\theta\u_1+(1-\theta)\u_2)\d\theta\u,\u\right\rangle\right| &\leq\frac{1}{\|\u\|_{\H}^2}\left\|\int_0^1\mathcal{C}'(\theta\u_1+(1-\theta)\u_2)\d\theta\u\right\|_{\V'}\|\u\|_{\V}\nonumber\\&\leq \frac{C}{\|\u\|_{\H}^2}\left(\|\u_1\|_{\wi\L^{r+1}}^{r-1}+\|\u_2\|_{\wi\L^{r+1}}^{r-1}\right)\|\u\|_{\wi\L^{r+1}}\|\u\|_{\V}\nonumber\\&\leq\frac{C}{\|\u\|_{\H}^2}\left(\|\u_1\|_{\wi\L^{r+1}}^{r-1}+\|\u_2\|_{\wi\L^{r+1}}^{r-1}\right)\|\u\|_{\V}^{\frac{2r}{r+1}}\|\u\|_{\H}^{\frac{2}{r+1}}\nonumber\\&\leq\widetilde{\Lambda}+\frac{C}{\min\{\mu,\alpha\}^{r+1}}
\left(\|\u_1\|_{\wi\L^{r+1}}^{(r+1)(r-1)}+\|\u_2\|_{\wi\L^{r+1}}^{(r+1)(r-1)}\right),
\end{align}
and one can complete the proof as in the previous case. We point out that the same method can be used to prove for the case $d=2$, $r\in(3,\infty)$ and $d=3$, $r\in[3,5]$ ($2\beta\mu\geq 1$ for $d=r=3$) also (see \eqref{49} below), but not for $d=3,r\in(5,\infty)$. The primary benefit of this method is that we only need the regularity   $\u\in\C((0,T];\V)\cap\mathrm{L}^2(\epsilon,T;\D(\A))\cap\mathrm{L}^{r+1}(\epsilon,T;\wi\L^{3(r+1)})$.  
	\end{proof}
	
	\begin{remark}
		In one considers 2D NSE ($\alpha=\beta=0$ in \eqref{1}), then one can assume zero mean condition  and take advantage of the Poincar\'e inequality to establish  the backward uniqueness results  (cf. \cite{IK,JCR5,RT1}). 
		\end{remark}

\begin{corollary}\label{cor3.2}
	Under the assumption of Theorem \ref{thm3.1},  either  $\u$ vanishes identically or $\u$ 	never vanishes. 
\end{corollary}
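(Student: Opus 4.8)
The plan is to read this dichotomy off the two uniqueness mechanisms already contained in the proof of Theorem \ref{thm3.1}, applied to the difference $\u=\u_1-\u_2$. Set $Z:=\{t\in[0,T]:\u(t)=\boldsymbol{0}\}$. Since $\u\in\C([0,T];\H)$, the map $t\mapsto\|\u(t)\|_{\H}$ is continuous and $Z$ is closed in $[0,T]$; note that no matching final-data hypothesis is imposed here, so $Z$ could a priori be any closed set. First I would reduce the statement to the implication: if some $t^*\in Z$ exists, then $Z=[0,T]$. Granting this, $Z$ is either empty (so $\u$ never vanishes) or equal to all of $[0,T]$ (so $\u$ vanishes identically), which is precisely the claim.

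To prove that implication, fix $t^*\in Z$, i.e.\ $\u_1(t^*)=\u_2(t^*)$, and treat the two sides separately. On the forward interval $[t^*,T]$ I invoke the forward uniqueness of Step 1: running the energy identity \eqref{04} and the Gronwall estimate following \eqref{07} (resp.\ \eqref{12} in the low-exponent $2$D range) on $[t^*,T]$ rather than $[0,T]$ gives $\|\u(t)\|_{\H}^2\le\|\u(t^*)\|_{\H}^2\,e^{C(t-t^*)}=0$ for all $t\in[t^*,T]$, so $[t^*,T]\subseteq Z$. On the backward interval $[0,t^*]$ I apply the backward uniqueness conclusion of Theorem \ref{thm3.1} with the terminal time $T$ replaced by $t^*$: the regularity estimates \eqref{05}, \eqref{18}, \eqref{010}, \eqref{327}, \eqref{011} and \eqref{334}, the log-convexity bound \eqref{020} (resp.\ \eqref{3p47}) for the quotient $\Lambda$, and the resulting integrability of $-\frac{\d}{\d t}\log\|\u(t)\|_{\H}$ in \eqref{025}, are all derived on sub-intervals $[\epsilon,t^*]$ with $0<\epsilon<t^*$ and never use a special feature of the endpoint. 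Hence $\u_1(t^*)=\u_2(t^*)$ forces $\u_1=\u_2$ on $[0,t^*]$, i.e.\ $[0,t^*]\subseteq Z$. Combining the two inclusions yields $Z=[0,T]$.

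The only genuinely delicate point I would flag is this re-use of the backward uniqueness argument with an arbitrary terminal time $t^*$ in place of $T$; it is legitimate precisely because the contradiction in Step 3 was obtained on an \emph{arbitrary} interval $(t_0,t_1)$ on which $\|\u\|_{\H}>0$, so the choice of terminal instant is immaterial. If one prefers to avoid invoking the theorem on a sub-interval, an equivalent self-contained route is: assuming $\u\not\equiv\boldsymbol{0}$, pick $t_0$ with $\u(t_0)\neq\boldsymbol{0}$; the estimate \eqref{025} together with the bound \eqref{020} shows $-\frac{\d}{\d t}\log\|\u(t)\|_{\H}$ stays integrable on any interval where $\u$ remains nonzero, so $\|\u(t)\|_{\H}$ cannot reach $0$ going forward, while forward uniqueness rules out its vanishing going backward from $t_0$; hence $\u(t)\neq\boldsymbol{0}$ throughout $[0,T]$, which is the contrapositive form of the corollary.
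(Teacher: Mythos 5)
Your proof is correct and follows essentially the same route as the paper, whose entire proof is the one-line observation that the dichotomy follows by combining the forward uniqueness of Step 1 with the backward uniqueness of Step 3 of Theorem \ref{thm3.1}. Your write-up merely makes explicit (via the zero set $Z$ and the remark that the backward argument works with any terminal instant $t^*$ in place of $T$) the details the paper leaves implicit, so there is no substantive difference in approach.
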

\begin{proof}
	Combining the forward uniqueness proved in Step 1 and the backward uniqueness  proved in Step 3  of the proof of Theorem \ref{thm3.1}, one can conclude the proof. 
\end{proof}

	\section{Applications}\label{Sec4}\setcounter{equation}{0}
	As a consequence of the backward uniqueness result, we first establish the approximate controllability result with respect to the initial data. Then we use the backward uniqueness in attractor theory to show that solution map is injective. Furthermore, we  establish a crucial estimate on the global attractor  which helps to prove the zero Lipschitz deviation of  the global attractor. The uniqueness as well as continuity with respect to the Eulerian initial data  of Lagrangian trajectories in 2D and 3D CBF flows are also examined in this section. 
	\subsection{Approximate conrollability}
		As an immediate consequence of the backward uniqueness result, we obtain the \emph{approximate controllability with respect to the initial data,  which is viewed as a start controller} (cf. \cite{VBMR,JLL}, etc.) for 2D and 3D CBF equations.  We follow the works \cite{VBMR,CFJP}, etc. to obtain our required results.    
		\begin{definition}
			Let $T>0$. The system \eqref{555} is \emph{approximately controllable with respect to the initial data} in time $T$ if, for every $\u_1\in\H$, and for every $\epsilon>0$, there exists an $\x\in\H$ such that the solution $\u^{\x}$ of the problem \eqref{555} satisfies $\|\u^{\x}(T)-\u_1\|_{\H}\leq\epsilon.$
		\end{definition}
		Corollary \ref{cor3.2} implies the  lack of null-controllability of the model \eqref{555} with respect to the initial data, asserting that there are no non-trivial initial  data that can be steered to zero. 
		
		In order to establish the approximate controllability with respect to initial data, we need the following result from  \cite{CBLT}.
	\begin{lemma}[Proposition IV.1, \cite{CBLT}]\label{lem3.1}
		Let $\S(t )$ be a family of continuous nonlinear operators in $\H$. We assume that for a $\u_0\in\H$, the map $\x\mapsto\S(t)(\x)$  is Fr\'echet differentiable at the point $\u_0$ and  denote its Fr\'echet derivative by $D\S(t)(\u_0)$.  If the operator  $(D \S(t) (\u_0))^*$ is injective, then the subspace generated by $\S(t) (\H)$ is dense in $\H$.
	\end{lemma}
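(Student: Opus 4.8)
The plan is to argue by contradiction, reducing the density assertion to the injectivity hypothesis through a scalar functional. First I would recall that in the Hilbert space $\H$ the linear subspace generated by $\S(t)(\H)$ fails to be dense precisely when there exists a nonzero $\y\in\H$ orthogonal to every element of $\S(t)(\H)$; this is the standard consequence of the Hahn--Banach theorem (equivalently, of the orthogonal decomposition of $\H$), together with the observation that orthogonality to each generator $\S(t)(\x)$ is the same as orthogonality to their span. Thus, supposing the conclusion were false, I fix such a $\y\neq\boldsymbol{0}$ satisfying $(\y,\S(t)(\x))=0$ for all $\x\in\H$.

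Next I would pass to the scalar map $\phi:\H\to\R$ defined by $\phi(\x)=(\y,\S(t)(\x))$, which by the previous step vanishes identically on $\H$. Since $\S(t)$ is Fr\'echet differentiable at $\u_0$ and the inner product with a fixed vector is a bounded linear functional, $\phi$ is Fr\'echet differentiable at $\u_0$ with $D\phi(\u_0)\h=(\y,D\S(t)(\u_0)\h)=\big((D\S(t)(\u_0))^*\y,\h\big)$ for every direction $\h\in\H$. Because $\phi\equiv 0$, its derivative at $\u_0$ is the zero functional, so $\big((D\S(t)(\u_0))^*\y,\h\big)=0$ for all $\h\in\H$, which forces $(D\S(t)(\u_0))^*\y=\boldsymbol{0}$.

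Finally, the injectivity hypothesis on $(D\S(t)(\u_0))^*$ yields $\y=\boldsymbol{0}$, contradicting the choice $\y\neq\boldsymbol{0}$. Hence no such orthogonal vector exists, and the subspace generated by $\S(t)(\H)$ is dense in $\H$. Note that $\phi\equiv 0$ is exploited only through $D\phi(\u_0)=0$, so differentiability of $\S(t)$ at the single base point $\u_0$ already suffices.

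The argument is essentially soft, and I expect the only point requiring care to be the transfer from a globally vanishing functional to the annihilation of $\y$ by the adjoint derivative, that is, the identity $D\phi(\u_0)\h=\big((D\S(t)(\u_0))^*\y,\h\big)$: one must verify that composing the Fr\'echet-differentiable nonlinear map $\S(t)$ with the continuous linear functional $(\y,\cdot)$ again produces a Fr\'echet-differentiable map whose derivative is exactly the claimed expression. This follows directly from the definition of the Fr\'echet derivative and the continuity of the inner product, but it should be stated explicitly so that the passage to the adjoint is justified.
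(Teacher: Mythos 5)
Your proof is correct and is essentially the same argument as the original: the paper does not reprove this lemma (it cites Proposition IV.1 of Bardos--Tartar), and that proof is exactly the duality argument you give --- a nonzero $\y$ orthogonal to the span of $\S(t)(\H)$ makes the scalar functional $\phi(\x)=(\y,\S(t)(\x))$ vanish identically, so its Fr\'echet derivative at $\u_0$ is zero, forcing $\left((D\S(t)(\u_0))^*\y,\h\right)=0$ for all $\h\in\H$, hence $(D\S(t)(\u_0))^*\y=\boldsymbol{0}$, contradicting injectivity. All the ingredients you flag (the orthogonal-complement characterization of density, the chain rule for a differentiable map composed with a bounded linear functional, and uniqueness of the Fr\'echet derivative) are correctly invoked, and you rightly note that differentiability at the single point $\u_0$ suffices.
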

	\begin{theorem}[Approximate controllability]\label{thm3.3}
		The space $\{\u^{\boldsymbol{x}}(T):\boldsymbol{x}\in\H\}$ is dense in $\H$, where $\u^{\boldsymbol{x}}(\cdot)$ is the unique solution to the  system \eqref{555}. 
	\end{theorem}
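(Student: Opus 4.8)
The plan is to verify the three hypotheses of Lemma \ref{lem3.1} for the solution map of \eqref{555} and then invoke it. Define the (nonlinear) solution operator $\S(t):\H\to\H$ by $\S(t)\x:=\u^{\x}(t)$, which is well defined and single-valued by the global well-posedness recalled in Section \ref{Sec2}. First I would record that $\S(t)$ is continuous on $\H$: this is exactly the continuous dependence on the initial datum, and it follows from the Gronwall estimates obtained in Step 1 of the proof of Theorem \ref{thm3.1}, applied to the difference of two solutions with the same forcing but different initial data (cf. \eqref{07}--\eqref{12}), which bound $\|\S(t)\x_1-\S(t)\x_2\|_{\H}$ by $\|\x_1-\x_2\|_{\H}$ times a constant depending only on the data.

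Next I would establish Fr\'echet differentiability of $\x\mapsto\S(T)\x$ at a fixed $\u_0\in\H$, with $\u=\S(\cdot)\u_0$. The candidate derivative is $D\S(T)(\u_0)\w=\boldsymbol{\eta}(T)$, where $\boldsymbol{\eta}$ solves the linearised system
\begin{equation*}
	\partial_t\boldsymbol{\eta}+\mu\A\boldsymbol{\eta}+\mathcal{B}(\u,\boldsymbol{\eta})+\mathcal{B}(\boldsymbol{\eta},\u)+\alpha\boldsymbol{\eta}+\beta\mathcal{C}'(\u)\boldsymbol{\eta}=\boldsymbol{0},\qquad\boldsymbol{\eta}(0)=\w.
\end{equation*}
The well-posedness of this linear problem, together with the remainder estimate $\|\S(T)(\u_0+\w)-\S(T)\u_0-\boldsymbol{\eta}(T)\|_{\H}=o(\|\w\|_{\H})$, is obtained by energy estimates in the spirit of Step 1 of Theorem \ref{thm3.1}, using the monotonicity \eqref{2.23} of $\mathcal{C}(\cdot)$ together with the Taylor/convexity bounds on $\mathcal{C}'$ and $\mathcal{C}''$ from \eqref{2p9}--\eqref{30} and the interior regularity of $\u$ from \eqref{05}, \eqref{18} to control the nonlinear difference quotient.

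Then I would identify the adjoint. Pairing $\boldsymbol{\eta}$ with the solution $\q$ of the backward adjoint linearised problem with terminal datum $\q(T)=\boldsymbol{\xi}$ and using $\frac{\mathrm{d}}{\mathrm{d}t}\langle\boldsymbol{\eta}(t),\q(t)\rangle=0$ yields $(D\S(T)(\u_0))^*\boldsymbol{\xi}=\q(0)$. In the time-reversed variable $s=T-t$ the function $\widetilde{\q}(s):=\q(T-s)$ solves a forward, dissipative linear system whose principal part is $\mu\A$ and whose lower-order coefficients are built from the fixed regular solution $\u$, with initial datum $\widetilde{\q}(0)=\boldsymbol{\xi}$; this is exactly the setting of the difference equation \eqref{336}. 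Injectivity of $(D\S(T)(\u_0))^*$ then amounts to the implication $\widetilde{\q}(T)=\q(0)=\boldsymbol{0}\Rightarrow\boldsymbol{\xi}=\widetilde{\q}(0)=\boldsymbol{0}$, which is precisely backward uniqueness for this linear problem. Since the logarithmic convexity argument of Theorem \ref{thm3.1} was carried out for exactly such a linear equation, it applies verbatim and furnishes the injectivity. Finally, Lemma \ref{lem3.1} delivers the density asserted in the theorem.

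The main obstacle I expect is the Fr\'echet differentiability step in the supercritical regime: because $\mathcal{C}'(\u)$ and $\mathcal{C}''(\u)$ have only limited integrability (cf. \eqref{2p9}--\eqref{30}), controlling the nonlinear remainder and the well-posedness of the linearised and adjoint problems requires the sharp regularity \eqref{05}, \eqref{18}, \eqref{011}, \eqref{334} of $\u$, and it is here that the admissible range of $r$ enters. The continuity of $\S(t)$ and the concluding application of Lemma \ref{lem3.1} are routine once differentiability and adjoint injectivity are secured.
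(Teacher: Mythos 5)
Your proposal follows essentially the same route as the paper: define the semiflow $\mathrm{S}(t)$, take its Fr\'echet derivative given by the linearised system, identify the adjoint $\Gamma^*\boldsymbol{p}=\z(0)$ through the backward dual equation, deduce injectivity of $\Gamma^*$ from the logarithmic-convexity backward uniqueness argument applied to that linear system, and conclude density via Lemma \ref{lem3.1}. Your remark that the backward uniqueness of Theorem \ref{thm3.1} must be re-run for the linear adjoint problem (rather than quoted verbatim for the nonlinear system) is in fact a point the paper glosses over, so your version is, if anything, slightly more careful.
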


\begin{proof}
		%As a direct consequence of the above backward uniqueness result  is  the following approximate controllability result: The space $\{\u^{\boldsymbol{x}}(T):\boldsymbol{x}\in\H\}$ is dense in $\H$, where $\u^{\boldsymbol{x}}(\cdot)$ is the unique solution to the  system \eqref{555}. 	In the control theory literature, this property is called the \emph{approximate controllability with respect to the initial data $\boldsymbol{x},$ which is viewed as a start controller} (see \cite{}, etc.). 
	%For simplicity, we take $\f=\boldsymbol{0}$ in \eqref{555}.	
	Let us define the semiflow (nonlinear semigroup) $\mathrm{S}(t):\H\to\H$ by $$\mathrm{S}(t)(\boldsymbol{x})=\u^{\boldsymbol{x}}(t), \ t\in[0,T],$$  where $\u^{\boldsymbol{x}}(\cdot)$ is the unique solution to the  system \eqref{555}. It can be easily seen that $\mathrm{S}(T)$ is Fr\'echet differentiable on $\H$ and its Fr\'echet  derivative $\Gamma:\H\to\H$  is given by $\Gamma\boldsymbol{y}=D\mathrm{S}(T)(\boldsymbol{x})\boldsymbol{y}=\boldsymbol{v}(T)$, where $\v\in\C([0,T];\H)$ is the unique solution of the equation
	\begin{equation}\label{556}
		\left\{
		\begin{aligned}
			\frac{\d  \v(t)}{\d  t}+\mu \A\v(t)+\B'(\u(t))\v(t)+\alpha\v(t)+\beta\mathcal{C}'(\u(t))\v(t)&=\boldsymbol{0}, \\
			\v(0)&=\boldsymbol{y},
		\end{aligned}\right. 
	\end{equation} 
	where $\langle\B'(\u)\v,\w\rangle=\langle\B(\u,\v)+\B(\v,\u),\w\rangle=b(\u,\v,\w)+b(\v,\u,\w),$ and 
$\mathcal{C}'(\cdot)$ is defined in \eqref{2p9}. As the system \eqref{556} is linear and the base state $\u(\cdot)$ has the regularity results discussed in the proof  of Theorem \ref{thm3.1}, the existence and uniqueness of weak solution of the system can be obtained by using a standard Faedo-Galerkin method  (see \cite{MT3}).  Then, the dual operator $\Gamma^*:\H\to\H$  is given by $\Gamma^*\p=\z(0)$, for all $\p\in\H$, where $\z(\cdot)$ is the solution to the following backward dual equation:
	\begin{equation}\label{557}
		\left\{
		\begin{aligned}
			-	\frac{\d  \z(t)}{\d  t}+\mu \A\z(t)+(\B'(\u(t)))^*\z(t)+\alpha\z(t)+\beta\mathcal{C}'(\u(t))\z(t)&=\boldsymbol{0}, \\
			\z(T)&=\boldsymbol{p},
		\end{aligned}\right. 
	\end{equation} 
	where $\langle(\B'(\u))^*\v,\w\rangle=\langle\v,\B'(\u)\w\rangle=b(\v,\u,\w)+b(\v,\w,\u),$ for $\u,\v,\w\in\V$.  The system \eqref{557} is well-posed for all $\p\in\H$ (see \cite{MT3}). The operator $\Gamma^*$ is injective on $\H$ is a  consequence of the following backward uniqueness result (Theorem \ref{thm3.1}): 
	\begin{equation}
		\left\{
		\begin{aligned}
			-	\frac{\d  \z(t)}{\d  t}+\mu \A\z(t)+(\B'(\u(t)))^*\z(t)+\alpha\z(t)+\beta\mathcal{C}'(\u(t))\z(t)&=\boldsymbol{0}, \\
			\z(0)&=\boldsymbol{0},
		\end{aligned}\right\}\Rightarrow\z\equiv \boldsymbol{0}.
	\end{equation} 
	%(cf. \eqref{550}, \eqref{552} and \eqref{5.55}) applied to the system \eqref{557}, one can show that  
	If  $\mathcal{N}(\Gamma^*)$ and $\mathcal{R}(\Gamma)$ denote the null space and range space of the operators $\Gamma^*$ and $\Gamma$, respectively, then $\mathcal{N}(\Gamma^*)=\mathcal{R}(\Gamma)^{\perp}$. Suppose that $\u_1\in\H$  is orthogonal to the range of $\Gamma $ in $\H$. We solve the system \eqref{557} with $\z(T)=\u_1$. Let $\v(\cdot)$ be a solution of \eqref{556} with $\v(0)=\u_0$  for any $\u_0\in\H$. Multiplying by $\v(\cdot)$, the equation satisfied by $\z(\cdot)$, we get 
	\begin{align*}
		(\v(T),\u_1)=(\u_0,\z(0)), \ \text{ for all }\ \u_0\in\H. 
	\end{align*}
	Since $\u_1$ is orthogonal to $\v(T)$ in $\H$, we deduce that $$(\u_0,\z(0))=0,\ \text{ for all }\ \u_0\in\H.$$ But then $\z(0)=\boldsymbol{0}$ in $\H$  and by the backward uniqueness result $\z\equiv \boldsymbol{0}$ in $\H$ and $\u_1=\boldsymbol{0}$. The injectivity of the operator $\Gamma^*$ in $\H$ and Proposition IV.1, \cite{CBLT} (see Lemma \ref{lem3.1}) imply that the space  $\{\mathrm{S}(T)(\x):\boldsymbol{x}\in\H\}$ is dense in $\H$, and the approximate controllability result follows. 
\end{proof}

	\subsection{The Lipschitz deviation} In this section, we discuss an another consequence of the backward uniqueness result in connection with the global attractors for 2D and 3D CBF equations (cf.  \cite{IK,JCR5} for 2D NSE). We prove that the  Lipschitz deviation  for the global  attractor	of the 2D and 3D  CBF equations is zero. Similar results for 2D NSE have been obtained in \cite{EPJC,JCR5} and we follow these works to establish the required result.
	
	Inspired from \cite{BRVY}, the authors in \cite{EJJCR} defined a new	quantity called \emph{Lipschitz deviation},  which measures how well a compact set $\mathrm{X}$ in a Hilbert space $(\mathcal{H},\|\cdot\|_{\mathcal{H}})$ can be approximated by graphs of Lipschitz functions (with prescribed Lipschitz constant) defined over a finite dimensional subspace of $\mathcal{H}$.  The \emph{Hausdorff semidistance} between two non-empty subsets $\mathrm{X}, \mathrm{Y} \subseteq\mathcal{H}$ is defined by $$\mathrm{dist}(\mathrm{X},\mathrm{Y}):=\sup\limits_{x\in\mathrm{X}}\inf\limits_{y\in\mathrm{Y}}\|x-y\|_{\mathcal{H}}.$$ 
	\begin{definition}[Definition 2.1, \cite{EPJC}]
		Let $\mathrm{X}$ be a compact subset of a real Hilbert space $\mathcal{H}$. Let $\delta_m(\mathrm{X},\e)$ be the smallest dimension of a linear subspace $\mathrm{U}\subset\mathcal{H}$  such that $$\mathrm{dist}(\mathrm{X},\mathrm{G}_{\mathrm{U}}[\varphi])<\e,$$  for some $m$-Lipschitz function $\varphi:\mathrm{U}\to\mathrm{U}^{\perp}$, that is, $$\|\varphi(u)-\varphi(v)\|_{\mathcal{H}}\leq m\|u-v\|_{\mathcal{H}}, \ \text{ for all }\ u,v\in\mathrm{U},$$ where $\mathrm{U}^{\perp}$ is the orthogonal complement of $\mathrm{U}$ in $\mathcal{H}$ and $\mathrm{G}_{\mathrm{U}}[\varphi]$ is the graph of $\varphi$ over $\mathrm{U}$: $$\mathrm{G}_{\mathrm{U}}[\varphi]=\{u+\varphi(u):u\in\mathrm{U}\}.$$ The $m$-Lipschitz deviation of $\mathrm{X}$, $\mathrm{dev}_m(\mathrm{X})$, is given by $$\mathrm{dev}_m(\mathrm{X})=\limsup\limits_{\e\to 0}\frac{\log\delta_m(\mathrm{X},\e)}{-\log\e}.$$ The  Lipschitz deviation of $\mathrm{X}$, $\mathrm{dev}(\mathrm{X})$, is given by $$\mathrm{dev}(\mathrm{X})=\lim\limits_{m\to\infty}\mathrm{dev}_m(\mathrm{X}).$$ 
		\end{definition}
	
	From \cite[Theorem 3.5]{KKMTM} (see \cite{KKMTM_RMP} for 2D CBF equations), we know that under the condition $\f\in\H$,  independent of $t$, the system \eqref{555}  possesses a global attractor
	\begin{align*}
		\mathscr{A}=\left\{\x\in\H:\S(t)(\x)\text{ exists  for all }t\in\mathbb{R}, \text{ and }\sup\limits_{t\in\R}\|\S(t)(\x)\|_{\H}<\infty \right\},
	\end{align*}
	where $\S(t)(\x)$ denotes the solution to the system \eqref{555} starting at $\x$. 
	\begin{lemma}\label{lem3.7}
		The semigroup $\S(t):\H\to\H$ is injective, for every $t>0$.
	\end{lemma}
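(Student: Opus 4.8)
The plan is to obtain the injectivity as an immediate consequence of the backward uniqueness theorem (Theorem \ref{thm3.1}), with no new analysis required. Fix $t>0$ and suppose $\x_1,\x_2\in\H$ satisfy $\S(t)(\x_1)=\S(t)(\x_2)$. Writing $\u_i(\cdot)=\S(\cdot)(\x_i)$ for $i=1,2$, both $\u_1$ and $\u_2$ are the (unique) global solutions of the first equation in \eqref{555} on $[0,t]$ driven by the same autonomous forcing $\f\in\H$, and by assumption they agree at the terminal time, $\u_1(t)=\u_2(t)$ in $\H$. The goal is then to conclude $\x_1=\u_1(0)=\u_2(0)=\x_2$, which is exactly the content of backward uniqueness run on the interval $[0,t]$.

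First I would verify that the hypotheses of Theorem \ref{thm3.1} are met on $[0,t]$. The theorem requires $\f\in\mathrm{W}^{1,2}(0,t;\H)$; in the attractor setting $\f\in\H$ is independent of time, so as a map $[0,t]\to\H$ it is constant, $\frac{\d\f}{\d t}=\boldsymbol{0}$, and hence trivially $\f\in\mathrm{W}^{1,2}(0,t;\H)$. Moreover, for $\x_i\in\H$ and this class of forcing the solutions enjoy precisely the smoothing $\u_i\in\C((0,t];\V)\cap\mathrm{L}^2(\epsilon,t;\D(\A))\cap\mathrm{L}^{r+1}(\epsilon,t;\wi\L^{3(r+1)})$ for every $\epsilon>0$ that underlies the proof of Theorem \ref{thm3.1}. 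Applying that theorem with terminal time $t$ and common terminal datum $\boldsymbol{\xi}=\u_1(t)=\u_2(t)$ yields $\u_1(s)=\u_2(s)$ in $\H$ for all $s\in[0,t]$; evaluating at $s=0$ gives $\x_1=\x_2$, so $\S(t)$ is injective.

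I do not expect any genuine obstacle here: the entire analytic difficulty has already been absorbed into Theorem \ref{thm3.1}, and the only point needing a remark is the admissibility of the autonomous forcing, which is immediate. The same argument applies uniformly over the full range covered by Theorem \ref{thm3.1}, namely $d=2$, $r\in[1,\infty)$ and $d=3$, $r\in[3,\infty)$ (with $2\beta\mu\geq 1$ when $d=r=3$), so the injectivity of $\S(t)$ holds on the entire region for which the global attractor is available.
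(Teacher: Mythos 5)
Your proof is correct and follows exactly the paper's own argument: both reduce the injectivity of $\S(t)$ to Theorem \ref{thm3.1} applied on $[0,t]$ with the common terminal value, concluding $\x_1=\x_2$ by evaluation at $s=0$. Your additional remark that the autonomous forcing $\f\in\H$ trivially lies in $\mathrm{W}^{1,2}(0,t;\H)$ is a useful hypothesis check that the paper leaves implicit.
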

\begin{proof}
	Suppose that $\x,\y\in\H$ and $\u(T)=\v(T),$ that is, $\S(T)(\x)=\S(T)(\y)$, for some $T>0$. Then by Theorem \ref{thm3.1}, we know that $\S(t)(\x)=\S(t)(\y)$ for all $t\in[0,T]$ and in particular $\x=\y$ and hence the injectivity of $\S(t):\H\to\H$ follows. 
\end{proof}
If the semigroup $\S(t)$ is injective on $\mathscr{A}$, then the dynamics, restricted to $\mathscr{A}$, actually define a dynamical system, that is, $\S(t)\big|_{\mathscr{A}}$ makes sense for all $t\in\R$, not just for $t\geq 0$ and $\S(t)\mathscr{A}=\mathscr{A}$ for all $t\in\mathbb{R}$ (Theorem 13, \cite{JCR5}). 

The following results for 2D NSE can be found in  \cite[Theorem 3.1]{IK}, \cite[Theorem 5.1]{EPJC1}, \cite[Proposition 43]{JCR5} and   \cite[Theorem 13.3]{JCR1}. 
	\begin{theorem}\label{thm3.5}
	For $d=2$, $r\in[1,\infty)$ and $d=3$, $r\in[3,\infty)$ ($2\beta\mu\geq 1$ for $d=r=3$),	under the above assumptions, we have 
		\begin{align}\label{360}
			\sup\limits_{\u_1,\u_2\in\mathscr{A},\u_1\neq\u_2}\frac{\|\u_1-\u_2\|_{\V}^2}{\|\u_1-\u_2\|_{\H}^2\log\left(\frac{M_0^2}{\|\u_1-\u_2\|_{\H}^2}\right)}<\infty,
		\end{align}where $M_0\geq 4\sup\limits_{\x\in\H}\|\x\|_{\H}$. 
	\end{theorem}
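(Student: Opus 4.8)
The plan is to transplant the logarithmic‑convexity machinery from the proof of Theorem \ref{thm3.1} onto the global attractor, where backward invariance furnishes two‑sided trajectories. Fix $\u_1,\u_2\in\mathscr{A}$ with $\u_1\neq\u_2$ and regard them as the time‑$0$ values of complete trajectories $\u_1(\cdot),\u_2(\cdot)$ defined for every $t\in\R$; this is legitimate because $\S(t)\big|_{\mathscr{A}}$ is a bijection for all $t\in\R$ by Lemma \ref{lem3.7}. By Corollary \ref{cor3.2} the difference $\u(\cdot)=\u_1(\cdot)-\u_2(\cdot)$ never vanishes, so the Dirichlet quotient $\Lambda(t)$ of \eqref{339} is well defined for all $t$, and it suffices to bound $\Lambda(0)=\|\u_1-\u_2\|_{\V}^2/\|\u_1-\u_2\|_{\H}^2$ by $C_0\log\bigl(M_0^2/\|\u_1-\u_2\|_{\H}^2\bigr)$ with $C_0$ independent of the pair. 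Since every norm entering \eqref{02}, \eqref{16}, \eqref{05}, \eqref{011} (and \eqref{334} when $d=3$, $r>5$) is uniformly bounded on $\mathscr{A}$, all the constants produced below are fixed.

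The argument rests on two differential inequalities already isolated in the proof of Theorem \ref{thm3.1}. First, \eqref{4p7} (and its analogues \eqref{46}, \eqref{3p47} in the remaining ranges of $r$), combined with the uniform attractor bounds, gives a Gronwall estimate $\frac{\d\Lambda}{\d t}\leq K_1\Lambda$, whence $\Lambda(t)\leq\Lambda(s)e^{K_1(t-s)}$ for $t\geq s$. Second, I would revisit \eqref{023} and sharpen \eqref{43}: because the power of $\Lambda$ appearing there is $\frac{4+d}{8}<1$ for $d\in\{2,3\}$, Young's inequality with a small coefficient upgrades it to $|\langle h(\u),\u\rangle|/\|\u\|_{\H}^2\leq\tfrac12\Lambda+K_2$, with $K_2$ uniform on $\mathscr{A}$ (the ranges $d=2,\,r\in[1,3]$ use \eqref{48} in the same way). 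Substituting into \eqref{023} yields $-\frac{\d}{\d t}\log\|\u\|_{\H}^2\geq\Lambda-2K_2$, i.e. $\Lambda(t)\leq-\frac{\d}{\d t}\log\|\u(t)\|_{\H}^2+2K_2$.

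Integrating the second inequality over $[-1,0]$ and using $\|\u(-1)\|_{\H}^2\leq\bigl(2\sup_{\x\in\mathscr{A}}\|\x\|_{\H}\bigr)^2\leq M_0^2$ gives
\[
\int_{-1}^{0}\Lambda(t)\,\d t\leq\log\frac{\|\u(-1)\|_{\H}^2}{\|\u(0)\|_{\H}^2}+2K_2\leq\log\frac{M_0^2}{\|\u(0)\|_{\H}^2}+2K_2 .
\]
By the mean value theorem there is $s_0\in[-1,0]$ with $\Lambda(s_0)\leq\log\bigl(M_0^2/\|\u(0)\|_{\H}^2\bigr)+2K_2$, and propagating forward with the first inequality over $[s_0,0]$ gives $\Lambda(0)\leq\Lambda(s_0)e^{K_1|s_0|}\leq e^{K_1}\bigl(\log(M_0^2/\|\u(0)\|_{\H}^2)+2K_2\bigr)$. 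Finally, since $M_0\geq4\sup_{\x\in\mathscr{A}}\|\x\|_{\H}$ forces $\|\u(0)\|_{\H}^2\leq M_0^2/4$ and hence $\log\bigl(M_0^2/\|\u(0)\|_{\H}^2\bigr)\geq\log4>0$, the additive constant $2K_2$ is absorbed into the logarithm, producing $\Lambda(0)\leq C_0\log\bigl(M_0^2/\|\u(0)\|_{\H}^2\bigr)$, which is precisely \eqref{1.5} and therefore \eqref{360}.

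The main obstacle I anticipate is not the pigeonhole/propagation step but the uniformity of $K_1,K_2$ in the pair $(\u_1,\u_2)$: the regularity estimates feeding \eqref{4p7} and \eqref{43} in Theorem \ref{thm3.1} are $\epsilon$‑dependent and hold only on $[\epsilon,T]$, so I must first convert them, via backward invariance of $\mathscr{A}$ together with the smoothing bounds \eqref{16}, \eqref{05}, \eqref{011}, \eqref{334}, into genuinely two‑sided, time‑independent bounds on the attractor. A secondary but essential subtlety is that the sublinearity $\frac{4+d}{8}<1$ must be exploited to keep a strictly positive multiple of $\Lambda$ on the right of the $\H$‑norm inequality; the naive estimate with coefficient $1$ on $\Lambda$ (as in \eqref{025}) destroys the lower bound on $-\frac{\d}{\d t}\log\|\u\|_{\H}^2$ and the integral control of $\int_{-1}^{0}\Lambda$ collapses.
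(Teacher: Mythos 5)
Your proposal is correct, and it reaches \eqref{360} by a genuinely different mechanism than the paper's proof, while sharing the same preparatory ingredients: uniform $\D(\A)$-bounds on $\mathscr{A}$ (backward invariance plus smoothing), the never-vanishing of $\u=\u_1-\u_2$ from Corollary \ref{cor3.2}, and the crucial observation that the convective term is \emph{sublinear} in the quotient (power $\tfrac{4+d}{8}<1$ in \eqref{43}, $\tfrac{r}{r+1}<1$ in \eqref{48}), so Young's inequality gives $|\langle h(\u),\u\rangle|/\|\u\|_{\H}^2\leq\tfrac12\Lambda+K_2$. The paper, however, never integrates $\Lambda$ over a time window: it forms the log-Dirichlet quotient $\widetilde{\Q}(t)=\widetilde{\Lambda}(t)/\log\left(M_0^2/\|\u(t)\|_{\H}^2\right)$ built on the \emph{linear} operator $\widetilde{\mathcal{A}}\u=\mu\A\u+\alpha\u$ (the damping difference is absorbed into $\widetilde{h}$, so its constants $k_1,k_2$ need only $\D(\A)$-bounds on $\mathscr{A}$ and no $\|\partial_t\u_i\|_{\V}$ estimates), derives the Riccati inequality $\frac{\d\widetilde{\Q}}{\d t}\leq-\widetilde{\Q}^2+(k_1+k_2)\widetilde{\Q}$ in \eqref{361}, inverts via $\upsilon=\widetilde{\Q}^{-1}$, and uses variation of constants so that $\widetilde{\Q}(t)\leq C(M_1,M_2)$ for all $t\geq T$ \emph{independently of} $\widetilde{\Q}(0)$; backward invariance of $\mathscr{A}$ then transfers this asymptotic, data-forgetting bound to every pair. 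Your route replaces the Riccati mechanism by a pigeonhole: integrate $-\frac{\d}{\d t}\log\|\u\|_{\H}^2\geq\Lambda-2K_2$ over the unit window $[-1,0]$, pick $s_0$ by the mean value theorem with $\Lambda(s_0)\leq\log\left(M_0^2/\|\u(0)\|_{\H}^2\right)+2K_2$, and push forward by Gronwall. This is more elementary (no quotient $\widetilde{\Q}$, only a unit backward window, explicit constant $C_0=e^{K_1}\left(1+2K_2/\log4\right)$), at the price that your preferred nonlinear quotient \eqref{339} with \eqref{4p7} drags in $\|\partial_t\u_i\|_{\V}$, which does hold uniformly on the attractor (autonomous $\f$, estimate \eqref{334}) but is an extra layer; your fallback to \eqref{46}--\eqref{48} avoids it entirely and is in fact the choice the paper makes. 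Two cosmetic points: $\Lambda(0)$ is not \emph{equal} to the Dirichlet quotient but dominates $\mu\|\u(0)\|_{\V}^2/\|\u(0)\|_{\H}^2$ by \eqref{014}, which is the direction you need; and you are right that the coefficient-$\tfrac12$ sharpening of \eqref{43}/\eqref{48} is essential — the paper uses the same sharpening implicitly, since quoting those estimates with coefficient $1$ would annihilate the $-\widetilde{\Q}^2$ term in \eqref{361}.
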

	\begin{proof}
	  For any $\x\in\H$, from \cite[Theorem 3.4]{KKMTM}, we infer that $\|\S(t)(\x)\|_{\V}\leq M_1$ for sufficiently large $t$. By using the estimates similar to  \eqref{327} and \eqref{334} on time derivative, one can show further that 
$\|\A\S(t)(\x)\|_{\H}\leq M_2$ for sufficiently large $t$ (see \eqref{382} below and  \cite[Propsoition 12.4]{JCR_2001} for 2D NSE).  In particular, $\mathscr{A}$ is bounded in $\D(\A)$. From the definition of $M_0$, it is clear that $\log\left(\frac{M_0^2}{\|\u_1-\u_2\|_{\H}^2}\right)\geq \log 4\geq 1$. 

Let us now consider a ``log-Dirichlet's quotient" (cf. \cite{IK})
\begin{align*}
	\widetilde{\Q}(t)=\frac{\widetilde{\Lambda}(t)}{\log\left(\frac{M_0^2}{\|\u(t)\|_{\H}^2}\right)}. 
\end{align*}
Then, differentiating $\widetilde{\Q}(t)$ with respect to $t$ and then using \eqref{4p7}, \eqref{43} and \eqref{48}, we find 
\begin{align}\label{361}
	\frac{\d\widetilde{\Q}}{\d t}&=\frac{\log\left(\frac{M_0^2}{\|\u\|_{\H}^2}\right)\frac{\d\widetilde{\Lambda}}{\d t}+\widetilde{\Lambda}\frac{\d}{\d t}\log(\|\u\|_{\H}^2)}{\left[\log\left(\frac{M_0^2}{\|\u(t)\|_{\H}^2}\right)\right]^2}=\frac{\log\left(\frac{M_0^2}{\|\u\|_{\H}^2}\right)\frac{\d\widetilde{\Lambda}}{\d t}+\widetilde{\Lambda}\frac{2}{\|\u\|_{\H}^2}\langle\partial_t\u,\u\rangle}{\left[\log\left(\frac{M_0^2}{\|\u(t)\|_{\H}^2}\right)\right]^2}\nonumber\\&\leq k_1(t)\widetilde{\Q}+\frac{-2\widetilde{\Lambda}^2+2\widetilde{\Lambda} \frac{\left|\langle h(\u),\u\rangle\right|}{\|\u\|_{\H}^2}}{\left[\log\left(\frac{M_0^2}{\|\u(t)\|_{\H}^2}\right)\right]^2}\leq k_1(t)\widetilde{\Q}-\widetilde{\Q}^2+k_2(t)\frac{\widetilde{\Q}}{\log\left(\frac{M_0^2}{\|\u(t)\|_{\H}^2}\right)}\nonumber\\&\leq -\widetilde{\Q}^2+(k_1(t)+k_2(t))\widetilde{\Q},
\end{align}
where 
\begin{align*}
	k_1&= \left\{\begin{array}{ll}\frac{C}{\min\{\mu,\alpha\}}\left(\|\u_1\|_{\H}\|\u_1\|_{\H^2}+C\|\u_2\|_{\H}\|\u_2\|_{\H^2}\right)+\frac{C\beta}{\min\{\mu,\alpha\}}\left(\|\u_1\|_{\V}^{2(r-1)}+\|\u_2\|_{\V}^{2(r-1)}\right),&\text{for }d=2,\\ \frac{C}{\min\{\mu,\alpha\}}\left(\|\u_1\|_{\V}\|\u_1\|_{\H^2}+C\|\u_2\|_{\V}\|\u_2\|_{\H^2}\right)&\\\quad+\frac{C\beta}{\min\{\mu,\alpha\}}\left(\|\u_1\|_{\wi\L^{r+1}}^{\frac{r-1}{2}}\|\u_1\|_{\wi\L^{3(r+1)}}^{\frac{3(r-1)}{2}}+\|\u_2\|_{\wi\L^{r+1}}^{\frac{r-1}{2}}\|\u_2\|_{\wi\L^{3(r+1)}}^{\frac{3(r-1)}{2}}\right),&\text{for }d=3,\end{array}\right.\\
	k_2&=\left\{\begin{array}{ll}\frac{C}{\min\{\mu,\alpha\}^{\frac{4+d}{4-d}}}\left(\|\u_1\|_{\wi\L^4}^{\frac{8}{4-d}}+\|\u_2\|_{\wi\L^4}^{\frac{8}{4-d}}\right)+\frac{C}{\min\{\mu,\alpha\}^{r+1}}\left(\|\u_1\|_{\wi\L^{r+1}}^{(r+1)(r-1)}+\|\u_2\|_{\wi\L^{r+1}}^{(r+1)(r-1)}\right),& \\ \hspace{6cm} \text{ for }d=2,r\in[1,\infty) \ \text{ and }\ d=3, r\in[3,5], &\\
	\frac{C}{\min\{\mu,\alpha\}^{7}}\left(\|\u_1\|_{\wi\L^4}^{8}+\|\u_2\|_{\wi\L^4}^{8}\right) \hspace{1.25cm} \text{ for }d=3,r\in[5,\infty).\end{array}\right.
	\end{align*}
By defining $\upsilon=\widetilde{\Q}^{-1}$, from \eqref{361}, we deduce 
\begin{align*}
	\frac{\d\upsilon}{\d t}+(k_1(t)+k_2(t))\upsilon\geq 1. 
	\end{align*}
An application of the variation of constants formula yields
\begin{align}\label{362}
	\widetilde{\Q}(t)\leq \frac{\widetilde{\Q}(0)\exp\left(\int_{0}^t(k_1(s)+k_2(s))\d s\right)}{1+\widetilde{\Q}(0)\int_{0}^t\exp\left(\int_{0}^s(k_1(r)+k_2(r))\d r\right)\d s}. 
\end{align}
As $t\to\infty$, the right hand side of $\widetilde{\Q}(t)$ in \eqref{362} tends to $k_1(t)+k_2(t)\leq C(M_1,M_2)$ (since $\D(\A)\subset\L^p$ for any $p\in[2,\infty)$). Therefore, we obtain that there exists $T$ such that
\begin{align*}
	\widetilde{\Q}(t)\leq C(M_1,M_2)\ \text{ for all }\ t\geq T,
\end{align*}
where $C(M_1,M_2)$ and $T$ are independent of $\widetilde{\Q}(0)$. 

Let us now consider $\x,\y\in\mathscr{A}$. Since solutions in the attractor exist for all time, we know there exists $t\geq T$ such that $\x=\S(t)(\u_1(-t))$  and $\y=\S(t)(\u_2(-t))$ with $\x\neq \y$. Since $\S(\cdot)$ injective (Lemma \ref{lem3.7}), $\u_1(-t)\neq\u_2(-t)$. Moreover, $\widetilde{\Q}(-t)<\infty$ implies that $\widetilde{\Q}(0)\leq C(M_1,M_2)$. Finally, we have 
\begin{align*}
\sup_{\x,\y\in\mathscr{A},\ \x\neq\y}	\widetilde{\Q}(t)\leq C(M_1,M_2),
\end{align*}
so that \eqref{360} follows. 
\end{proof}

It should be noted  that whether \eqref{360} holds without the factor $\log\left(\frac{M_0^2}{\|\u_1-\u_2\|_{\H}^2}\right)$ is an open problem. The above result can be used to obtain the $1$-log-Lipschitz continuity of $\A : \mathscr{A}\to\H$ (cf.  \cite[Corollary 5.2]{EPJC1} for 2D NSE). 
\begin{corollary}\label{cor3.9}
	There exists a constant $\widehat{K}>0$ such that 
	\begin{align}\label{3p63}
		\|\A(\u_1-\u_2)\|_{\H}\leq \widehat{K}\|\u_1-\u_2\|_{\H}\log\left(\frac{\widehat{M}_0^2}{\|\u_1-\u_2\|_{\H}^2}\right),\ \text{ for all }\ \u_1,\u_2\in\mathscr{A},\ \u_1\neq\u_2,
	\end{align}
for some $\widehat{M}_0\geq 4\sup\limits_{\x\in\H}\|\A^{\frac{1}{2}}\x\|_{\H}$. 
\end{corollary}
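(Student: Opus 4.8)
The plan is to obtain \eqref{3p63} by chaining the log-Dirichlet quotient estimate of Theorem \ref{thm3.5}, which controls $\|\u_1-\u_2\|_{\V}$ in terms of $\|\u_1-\u_2\|_{\H}$, with a second estimate of exactly the same type but \emph{one derivative higher}, controlling $\|\A(\u_1-\u_2)\|_{\H}$ in terms of $\|\u_1-\u_2\|_{\V}$. Writing $\u=\u_1-\u_2$, Theorem \ref{thm3.5} reads $\|\u\|_{\V}^2\leq C_0\|\u\|_{\H}^2\log(M_0^2/\|\u\|_{\H}^2)$, and the target intermediate bound is the companion $\|\A\u\|_{\H}^2\leq C_1\|\u\|_{\V}^2\log(\widehat{M}_0^2/\|\u\|_{\V}^2)$. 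Following \cite{EPJC1} for 2D NSE, the two are then combined.

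To produce the higher-level estimate I would repeat the argument of Theorem \ref{thm3.5} in the $\V$-inner product rather than the $\H$-inner product. Since $\langle\A\cdot,\cdot\rangle$ induces the $\V$-norm, I would set $\widetilde{\mathcal{A}}=\mu\A+\alpha\mathrm{I}$ and define the higher Rayleigh (``$\V$-Dirichlet'') quotient, in the $\V$-inner product, $\widehat{\Lambda}(t)=\langle\widetilde{\mathcal{A}}\u,\u\rangle_{\V}/\|\u\|_{\V}^2=(\mu\|\A\u\|_{\H}^2+\alpha\|\u\|_{\V}^2)/\|\u\|_{\V}^2\geq\mu\|\A\u\|_{\H}^2/\|\u\|_{\V}^2$, together with $\widehat{Q}(t)=\widehat{\Lambda}(t)/\log(\widehat{M}_0^2/\|\u(t)\|_{\V}^2)$. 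As in \eqref{353}, the damping is moved to the right-hand side, so that $\u$ solves $\partial_t\u+\widetilde{\mathcal{A}}\u=\widetilde{h}(\u)$ with $\widetilde{h}(\u)=-\B(\u_1,\u)-\B(\u,\u_2)-\beta(\mathcal{C}(\u_1)-\mathcal{C}(\u_2))$. Differentiating $\widehat{Q}$ exactly as in \eqref{46}--\eqref{361} should yield a Riccati inequality $\tfrac{\d\widehat{Q}}{\d t}\leq-\widehat{Q}^2+(\widehat{k}_1+\widehat{k}_2)\widehat{Q}$, whose coefficients are time-integrable because $\mathscr{A}$ is bounded in $\D(\A)$ (established inside the proof of Theorem \ref{thm3.5}) and $\partial_t\u_i\in\V$ by the regularity estimates \eqref{011} and \eqref{334}. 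The variation-of-constants bound then gives $\widehat{Q}(t)\leq C$ uniformly for large $t$, and invoking the injectivity of $\S(\cdot)$ (Lemma \ref{lem3.7}) as in Theorem \ref{thm3.5} transfers this to all $\u_1,\u_2\in\mathscr{A}$, i.e. $\|\A\u\|_{\H}^2\leq C_1\|\u\|_{\V}^2\log(\widehat{M}_0^2/\|\u\|_{\V}^2)$.

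Finally I would chain the two estimates and clean up the logarithms. Substituting the $\V$-bound into the $\D(\A)$-bound gives $\|\A\u\|_{\H}^2\leq C_0C_1\|\u\|_{\H}^2\log(M_0^2/\|\u\|_{\H}^2)\log(\widehat{M}_0^2/\|\u\|_{\V}^2)$. The Poincar\'e inequality \eqref{poin}, $\lambda_1\|\u\|_{\H}^2\leq\|\u\|_{\V}^2$, shows $\log(\widehat{M}_0^2/\|\u\|_{\V}^2)\leq\log(\widehat{M}_0^2/(\lambda_1\|\u\|_{\H}^2))\leq C\log(\widehat{M}_0^2/\|\u\|_{\H}^2)$, where the last step uses that both logarithms exceed $1$ for a suitable choice of $\widehat{M}_0$ dominating $4\sup_{\x\in\mathscr{A}}\|\A^{\frac{1}{2}}\x\|_{\H}$. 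Hence $\|\A\u\|_{\H}^2\leq\widehat{K}^2\|\u\|_{\H}^2\log^2(\widehat{M}_0^2/\|\u\|_{\H}^2)$, and taking square roots yields \eqref{3p63}.

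The main obstacle is the higher-level estimate: the ``forcing'' term $\|\widetilde{h}(\u)\|_{\V}$ now involves the nonlinearities measured one derivative higher than in Theorem \ref{thm3.5}, so one must bound $\|\B(\u_1,\u)\|_{\V}$, $\|\B(\u,\u_2)\|_{\V}$ and $\|\mathcal{C}(\u_1)-\mathcal{C}(\u_2)\|_{\V}$ by $\|\u\|_{\V}$ times quantities that are finite on $\mathscr{A}$. For the Forchheimer term this requires differentiating $\mathcal{C}$ and controlling $\mathcal{C}'$ and $\mathcal{C}''$ through estimates of the type \eqref{017}--\eqref{019}, which in three dimensions and for large $r$ is delicate; ensuring the resulting Riccati coefficients $\widehat{k}_1,\widehat{k}_2$ remain integrable (equivalently, that $\mathscr{A}$ carries enough regularity beyond $\D(\A)$) is the crux of the argument.
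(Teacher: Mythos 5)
Your proposal follows essentially the same route as the paper: repeat the log-Dirichlet quotient argument of Theorem \ref{thm3.5} one derivative higher with $\widetilde{h}(\u)$ from \eqref{353}, obtain the intermediate bound $\|\A\u\|_{\H}^2\leq C_1\|\u\|_{\V}^2\log\left(\widetilde{M}_0^2/\|\u\|_{\V}^2\right)$, chain it with \eqref{1.5}, and absorb the $\V$-norm logarithm via Poincar\'e and a suitable choice of the constants $M_0,\widetilde{M}_0,\widehat{M}_0$. The one step you leave open --- bounding the forcing one derivative higher --- is exactly where the paper works: it estimates $\|\A^{\frac{1}{2}}\widetilde{h}(\u)\|_{\H}\leq C(M_1,M_2)\|\A\u\|_{\H}$ using the fractional Leibniz (Kato--Ponce) rule together with the boundedness of $\mathscr{A}$ in $\D(\A)$ and the embedding $\D(\A)\subset\widetilde{\L}^{\infty}$, so all the nonlinear factors (including $|\u_i|^{r-1}$ for arbitrary $r$) are controlled by $\widetilde{\L}^{\infty}$-bounds on the attractor; the delicacy you anticipate for $d=3$ and large $r$ therefore does not arise, and no regularity beyond $\D(\A)$ is needed.
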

\begin{proof}
	We use the fact that $\mathscr{A}$ is bounded in $\D(\A)$. Let us now consider $\widetilde{h}(\u)$ from \eqref{353} and estimate $\|\A^{\frac{1}{2}}\widetilde{h}(\u)\|_{\H}$ using fractional Leibniz rule (\cite[Theorem 1]{LGSO}), and Sobolev's inequality as
	\begin{align}
	&	\|\A^{\frac{1}{2}}\widetilde{h}(\u)\|_{\H}\nonumber\\&\leq\|\A^{\frac{1}{2}}\B(\u_1,\u)\|_{\H}+\|\A^{\frac{1}{2}}\B(\u,\u_2)\|_{\H}+\beta\left\|\A^{\frac{1}{2}}\left(\int_0^1\mathcal{C}'(\theta\u_1+(1-\theta)\u_2)\d\theta\u\right)\right\|_{\H}\nonumber\\&\leq C\|\A(\u_1\otimes\u)\|_{\H}+\|\A(\u\otimes\u_2)\|_{\H}+C\beta\|\A^{\frac{1}{2}}(|\u_1|^{r-1}\u)\|_{\H}+C\beta\|\A^{\frac{1}{2}}(|\u_2|^{r-1}\u)\|_{\H}\nonumber\\&\leq C\left(\|\A\u_1\|_{\H}+\|\A\u_2\|_{\H}\right)\|\u\|_{\wi\L^{\infty}}+C\left(\|\u_1\|_{\wi\L^{\infty}}+\|\u_2\|_{\wi\L^{\infty}}\right)\|\A\u\|_{\H}\nonumber\\&\quad+C\beta\left(\|\u_1\|_{\wi\L^{\infty}}^{r-1}+\|\u_2\|_{\wi\L^{\infty}}^{r-1}\right)\|\A^{\frac{1}{2}}\u\|_{\H}+C\beta\left(\|\u_1\|_{\wi\L^{\infty}}^{r-2}\|\A^{\frac{1}{2}}\u_1\|_{\H}+\|\u_2\|_{\wi\L^{\infty}}^{r-2}\|\A^{\frac{1}{2}}\u_2\|_{\H}\right)\|\u\|_{\wi\L^{\infty}}\nonumber\\&\leq C(M_1,M_2)\|\A\u\|_{\H},
	\end{align}
for sufficiently large $t$. Therefore,  calculations similar to the proof of Theorem \ref{thm3.5} yield  for all $\u_1,\u_2\in\mathscr{A}$ with $ \u_1\neq\u_2,$
\begin{align}\label{365}
	\|\A(\u_1-\u_2)\|_{\H}^2\leq C_1\|\A^{\frac{1}{2}}(\u_1-\u_2)\|_{\H}^2\log\left(\frac{\widetilde{M}_0^2}{\|\A^{\frac{1}{2}}(\u_1-\u_2)\|_{\H}^2}\right),
\end{align}
where $\widetilde{M}_0\geq 4\sup\limits_{\x\in\H}\|\A^{\frac{1}{2}}\x\|_{\H}$ and $C_1$ is a constant. Therefore, combining \eqref{1.5} and \eqref{365}, we obtain  
\begin{align*}
	\|\A(\u_1-\u_2)\|_{\H}^2\leq C_0C_1\|\u_1-\u_2\|_{\H}^2\log\left(\frac{{M}_0^2}{\|\u_1-\u_2\|_{\H}^2}\right)\log\left(\frac{\widehat{M}_0^2}{\|\A^{\frac{1}{2}}(\u_1-\u_2)\|_{\H}^2}\right).
\end{align*}
Since $\|\u_1-\u_2\|_{\H}\leq\frac{1}{\sqrt{\lambda_1}}\|\A^{\frac{1}{2}}(\u_1-\u_2)\|_{\H}$, we further have 
\begin{align}
	\|\A(\u_1-\u_2)\|_{\H}^2\leq C_0C_1\|\u_1-\u_2\|_{\H}^2\log\left(\frac{{M}_0^2}{\|\u_1-\u_2\|_{\H}^2}\right)\log\left(\frac{\widetilde{M}_0^2}{\lambda_1\|\u_1-\u_2\|_{\H}^2}\right).
\end{align}
One can choose $M_0,\widetilde{M}_0$ and $\widehat{M}_0$ such that $M_0\leq \frac{\widetilde{M}_0}{\sqrt{\lambda_1}}\leq\widehat{M}_0$. Therefore, we finally derive \eqref{3p63} for  $\widehat{K}=\sqrt{C_0C_1}$. 
\end{proof}

Using either Theorem \ref{thm3.5} or Corollary \ref{cor3.9}, it follows from \cite[Proposition 4.2]{EPJC1} (or Corollary 4.4) that there exists a family of approximating Lipschitz manifolds $\mathcal{M}_N$ ($N=2M_0e^{\frac{-\lambda_{n+1}}{2C_0}}$, see \eqref{364} below), such that the global attractor $\mathscr{A}$ associated with  the 2D and 3D  CBF equations lies within an exponentially small neighbourhood of $\mathcal{M}_N$ and hence has zero Lipschitz deviation.  
\begin{theorem}\label{thm3.6}
For $d=2$, $r\in[1,\infty)$ and $d=3$, $r\in[3,\infty)$ ($2\beta\mu\geq 1$ for $d=r=3$),	if $\f\in\H$, then $\mathrm{dev} (\mathscr{A})=0$, where $\mathscr{A}$ is the global	attractor for the 2D and 3D  CBF equations. 
\end{theorem}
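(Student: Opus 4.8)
The plan is to combine the log-Dirichlet estimate \eqref{360} (equivalently \eqref{1.5}) with the fact that $\mathscr{A}$ is bounded in $\D(\A)$ in order to manufacture, at each spectral level $n$, a single globally Lipschitz graph over the finite-dimensional space $P_n\H$ whose distance to $\mathscr{A}$ decays exponentially in $\lambda_{n+1}$, and then to convert this exponential accuracy into a bound on $\delta_m(\mathscr{A},\e)$ via Weyl's asymptotics. Throughout, let $P_n$ be the orthogonal projection of $\H$ onto $\mathrm{span}\{\boldsymbol{e}_1,\dots,\boldsymbol{e}_n\}$ and $Q_n=\I-P_n$, recalling that $\A\boldsymbol{e}_k=\lambda_k\boldsymbol{e}_k$ with $0<\lambda_1\leq\lambda_2\leq\cdots\to\infty$.

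First I would extract the geometric consequence of \eqref{360}. If $\u_1,\u_2\in\mathscr{A}$ share the same low-mode projection, $P_n\u_1=P_n\u_2$, then $\u:=\u_1-\u_2=Q_n\u$ satisfies $\|\A^{\frac12}\u\|_{\H}^2\geq\lambda_{n+1}\|\u\|_{\H}^2$; inserting this into \eqref{360} and cancelling the common factor $\|\u\|_{\H}^2$ gives $\lambda_{n+1}\leq C_0\log\!\big(M_0^2/\|\u\|_{\H}^2\big)$, that is,
\begin{align}\label{364}
\|\u_1-\u_2\|_{\H}\leq M_0\,e^{-\lambda_{n+1}/(2C_0)}=\tfrac12 N,\qquad N:=2M_0\,e^{-\lambda_{n+1}/(2C_0)}.
\end{align}
Hence the fibres of $P_n\big|_{\mathscr{A}}$ have diameter at most $N$: the attractor is exponentially thin in its high modes.

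With the squeezing bound \eqref{364}, the uniform $\D(\A)$-bound on $\mathscr{A}$, and the $1$-log-Lipschitz continuity of $\A$ from Corollary \ref{cor3.9} in hand, I would invoke the abstract construction of \cite[Proposition 4.2]{EPJC1} (or its Corollary 4.4): these are precisely the hypotheses needed to produce an $m$-Lipschitz map $\Phi_n\colon P_n\H\to Q_n\H$ whose graph $\mathcal{M}_N=\mathrm{G}_{P_n\H}[\Phi_n]$ satisfies $\mathrm{dist}(\mathscr{A},\mathcal{M}_N)\leq N$. The genuine difficulty lies here — upgrading the set-valued, a priori only log-Lipschitz inverse of $P_n\big|_{\mathscr{A}}$ to a single-valued, honestly Lipschitz selection defined on all of $P_n\H$ with Lipschitz constant controlled independently of $n$ — but this is exactly what the cited proposition delivers from \eqref{364} and the boundedness of $\mathscr{A}$ in $\D(\A)$.

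It remains to perform the dimension count. Setting $\e=N$ in \eqref{364} gives $\lambda_{n+1}=2C_0\log(2M_0/\e)$, so that $\delta_m(\mathscr{A},\e)\leq\dim P_n\H=n$. On the torus $\mathbb{T}^d$ the Stokes eigenvalues obey Weyl's law $\lambda_n\sim c\,n^{2/d}$, whence $n\sim C\big(\log(1/\e)\big)^{d/2}$ as $\e\to0$. Therefore
\begin{align*}
\mathrm{dev}_m(\mathscr{A})=\limsup_{\e\to0}\frac{\log\delta_m(\mathscr{A},\e)}{-\log\e}\leq\limsup_{\e\to0}\frac{\tfrac{d}{2}\log\log(1/\e)+C'}{\log(1/\e)}=0
\end{align*}
for every admissible $m$, and letting $m\to\infty$ yields $\mathrm{dev}(\mathscr{A})=0$. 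The argument is uniform across the ranges $d=2,\ r\in[1,\infty)$ and $d=3,\ r\in[3,\infty)$ (with $2\beta\mu\geq1$ when $d=r=3$), since \eqref{360} holds throughout with constants $C_0,M_0$ depending only on $\mu,\alpha,\beta,r$ and $\|\f\|_{\H}$.
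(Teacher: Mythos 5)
Your proposal is correct and is, in substance, the same argument as the paper's: the log-Dirichlet bound \eqref{360} is converted into $1$-Lipschitz graphs over the spectral spaces $\mathrm{P}_n\H$ whose distance to $\mathscr{A}$ is of order $e^{-\lambda_{n+1}/(2C_0)}$, and polynomial growth of the eigenvalues then forces $\mathrm{dev}_m(\mathscr{A})=0$ for each $m$, hence $\mathrm{dev}(\mathscr{A})=0$. The difference is one of completeness rather than of route: you delegate the construction of the graph to \cite[Proposition 4.2]{EPJC1} (a citation the paper's remark preceding the theorem also sanctions), whereas the paper's proof is precisely an inline proof of that proposition --- take $\mathrm{X}\subset\mathscr{A}$ maximal for the cone condition \eqref{363}, observe that $\mathrm{P}_n\x\mapsto\mathrm{Q}_n\x$ is then single-valued and $1$-Lipschitz on $\mathrm{P}_n\mathrm{X}$, extend it to all of $\mathrm{P}_n\H$ by \cite[Theorem 12.3]{JHLR}, and use maximality to squeeze every $\x\in\mathscr{A}\setminus\mathrm{X}$ onto the graph. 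Be aware that your preparatory step is aimed slightly off target: you derive exponential smallness only for pairs with $\mathrm{P}_n\u_1=\mathrm{P}_n\u_2$, whereas the construction needs it for differences $\w=\x-\y$ satisfying merely $\|\mathrm{Q}_n\w\|_{\H}\geq\|\mathrm{P}_n\w\|_{\H}$ (this is where the factor $\|\w\|_{\H}\leq 2\|\mathrm{Q}_n\w\|_{\H}$ and the $2M_0$ in \eqref{364} enter); accordingly, the hypothesis that feeds the cited proposition is the full pairwise estimate \eqref{360}/\eqref{1.5} --- which you do have, since it is your starting point --- and not the exact-fibre bound together with $\D(\A)$-boundedness and Corollary \ref{cor3.9}, as you assert. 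Two harmless remarks: your Weyl exponent $\lambda_n\sim c\,n^{2/d}$ is the correct one for $\mathbb{T}^d$ (the paper's $n^{d/2}$ agrees only when $d=2$), and both write-ups implicitly use monotonicity of $\delta_m(\mathscr{A},\cdot)$ to pass from the sequence $\e_n$ to the limsup over all $\e\to 0$; neither point affects the conclusion.
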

\begin{proof}
	Let $\mathrm{P}_n$ be the orthogonal projection onto the first $n$ eigenfunctions of the Stokes operator $\A$ and $\mathrm{Q}_n=\mathrm{I}-\mathrm{P}_n$ so that $$\mathrm{P}_n\x=\sum\limits_{k=1}^n(\x,\boldsymbol{e}_k)\boldsymbol{e}_k\ \text{ and }\ \mathrm{Q}_n\x=\sum\limits_{k=n+1}^{\infty}(\x,\boldsymbol{e}_k)\boldsymbol{e}_k.$$ Consider a subset $\mathrm{X}$ of $\mathscr{A}$ that is maximal	for the relation 
	\begin{align}\label{363}
		\|\mathrm{Q}_n(\x-\y)\|_{\H}\leq 	\|\mathrm{P}_n(\x-\y)\|_{\H}, \ \text{ for all }\ \x,\y\in\mathrm{X}. 
	\end{align}
For every $\boldsymbol{p}\in\mathrm{P}_n\mathrm{X}$ with $\boldsymbol{p}=\mathrm{P}_n\x,\ \x\in\mathrm{X},$ define $\phi_n(\boldsymbol{p})=\mathrm{Q}_n\x.$ From \eqref{363}, this is well-defined and 
\begin{align*}
	\|\phi_n(\boldsymbol{p})-\phi_n(\boldsymbol{p}')\|_{\H}\leq \|\boldsymbol{p}-\boldsymbol{p}'\|_{\H}, \ \text{ for all }\ \boldsymbol{p},\boldsymbol{p}'\in\mathrm{P}_n\mathrm{X}. 
\end{align*}
Since $\mathrm{X}$ is a closed subset of the compact set $\mathscr{A}$, $\mathrm{P}_n\mathrm{X}$ is closed,  using \cite[Theorem 12.3]{JHLR}, we can extend $\phi_n$  from the closed set $\mathrm{P}_n\mathrm{X}\subset\mathrm{P}_n\H$ to a function $\Phi:\mathrm{P}_n\H\to\mathrm{Q}_n\H$, preserving the Lipschitz constant. Our aim is to show that 
\begin{align}\label{364}
	\mathrm{dist}(\mathscr{A},\mathrm{G}_{\mathrm{P}_n\H}[\Phi])\leq\e_n=2M_0e^{\frac{-\lambda_{n+1}}{2C_0}},
\end{align}
where $\{\lambda_n\}_{n=1}^{\infty}$ is the set of eigenvalues of $\A$. Indeed, if $\x\in\mathscr{A}$ but $\x\not\in\mathrm{X}$, then there is a $\y\in\mathrm{X}$ such that 
\begin{align}
	\|\mathrm{Q}_n(\x-\y)\|_{\H}\geq 	\|\mathrm{P}_n(\x-\y)\|_{\H}.
\end{align}
Setting $\w=\x-\y$, we have  $\|\mathrm{Q}_n\w\|_{\H}\leq \|\w\|_{\H}\leq\|\mathrm{P}_n\w\|_{\H}+\|\mathrm{Q}_n\w\|_{\H}\leq 2\|\mathrm{Q}_n\w\|_{\H}$ and 
\begin{align*}
	\|\A^{\frac{1}{2}}\w\|_{\H}^2=\sum_{k=1}^{\infty}\lambda_k|(\w,\boldsymbol{e}_k)|^2\geq \sum_{k=n+1}^{\infty}\lambda_k|(\w,\boldsymbol{e}_k)|^2\geq\lambda_{n+1}\|\mathrm{Q}_n\w\|_{\H}^2. 
\end{align*}
Therefore, using \eqref{1.5}, we deduce 
\begin{align}
\lambda_{n+1}\|\mathrm{Q}_n\w\|_{\H}^2\leq\|\A^{\frac{1}{2}}\w\|_{\H}^2\leq 4C\|\mathrm{Q}_n\w\|_{\H}^2\log\left(\frac{M_0^2}{\|\mathrm{Q}_n\w\|_{\H}^2}\right),
\end{align}
so that $\|\Q_n\w\|_{\H}\leq \frac{\e_n}{2}=M_0e^{\frac{-\lambda_{n+1}}{2C_0}},$ and \eqref{364} follows. The estimate \eqref{364} implies that $\delta_1(\mathscr{A},\e_n)=n$ and hence 
\begin{align}
	\limsup\limits_{n\to\infty}\frac{\log\delta_1(\mathscr{A},\e_n)}{-\log\e_n}=	\limsup\limits_{n\to\infty}\frac{\log n}{\frac{\lambda_{n+1}}{2C_0}-\log(2M_0)}.
\end{align}
Since the  eigenvalues $\lambda_n\sim \lambda_1n^{\frac{d}{2}}$ as $n\to\infty$ (\cite[Page 54]{FMRT}),  we further have 
\begin{align*}
	\limsup\limits_{n\to\infty}\frac{\log n}{\lambda_{n}}=0. 
	\end{align*}
Therefore, $\mathrm{dev}(\mathscr{A})\leq \mathrm{dev}_1(\mathscr{A})=0.$ 
\end{proof}

\subsection{The uniqueness of Lagrangian trajectories in 2D and 3D CBF flows} Let us now prove  the uniqueness of Lagrangian trajectories in 2D and 3D CBF flows by using the log-Lipschitz regularity. Given an initial data  in $\H^{d-2}_p(\T^d)$ ($\H^{2}_p(\T^3)$ for $d=3$ and $r\in(5,\infty)$), the 2D and 3D CBF  equations \eqref{1} have a unique solution. Given such a solution, we show  that the Lagrangian particle trajectories are also unique. More precisely, the question is whether the solutions of the ordinary differential equation 
\begin{equation}\label{372}
	{\boldsymbol{X}}'=\u(\boldsymbol{X},t), \
	\boldsymbol{X}(0)=\boldsymbol{X}_0,
\end{equation}
are unique, when $\u(t)$ is a solution of the CBF  with initial data $\u_0\in\H^{d-2}_p(\T^d)$. For $\boldsymbol{W}=\boldsymbol{X}-\boldsymbol{Y}$, a calculation similar to \eqref{023} yields 
\begin{align}\label{373}
-\frac{\d}{\d t}\log|\boldsymbol{W}(t)|&=	-\frac{1}{2|\boldsymbol{W}(t)|^2}\frac{\d}{\d t}|\boldsymbol{W}(t)|^2=-\frac{(\boldsymbol{W}(t),{\boldsymbol{W}}'(t))}{|\boldsymbol{W}(t)|^2}\leq\frac{|\u(\boldsymbol{X},t)-\u(\boldsymbol{Y},t)|}{|\boldsymbol{W}(t)|}\nonumber\\&=\frac{1}{|\boldsymbol{W}(t)|}\left|\int_0^1\nabla\u(\theta\boldsymbol{X}+(1-\theta\boldsymbol{Y}),t)\d\theta\cdot\boldsymbol{W}(t)\right|\nonumber\\&\leq C\|\nabla\u(t)\|_{\L^{\infty}}\leq C\|\u(t)\|_{\H^s}, \ \text{ for }\ s>\frac{d}{2}+1,
\end{align}
where we have used the Sobolev's inequality. Integrating the above inequality from $t_0$ to $t>t_0$, one obtains 
\begin{align}\label{374}
	-\log|\boldsymbol{W}(t)|\leq -\log|\boldsymbol{W}(t_0)|+\int_{t_0}^t\|\u(r)\|_{\H^s}\d r, \ \text{ for }\ s>\frac{d}{2}+1. 
\end{align}
If  $\u\in\mathrm{L}^1(0,T;\H^s_p(\T^d))$,  for $s>\frac{d}{2}+1$, then one could put $t_0 = 0$ in \eqref{374} and immediately obtain uniqueness. For the critical case $s=\frac{d}{2}+1$, by using  the log-Lipschitz regularity (see \cite[Theorem 2]{EZ}),  we know that $$|\u(\boldsymbol{X},t)-\u(\boldsymbol{Y},t)|\leq C\|\u(t)\|_{\H^{\frac{d}{2}+1}}|\boldsymbol{X}-\boldsymbol{Y}|\left(-\log|\boldsymbol{X}-\boldsymbol{Y}|\right)^{\frac{1}{2}}.$$ Therefore, from \eqref{373}, we immediately have 
\begin{align}
	-\frac{\d}{\d t}[\left(-\log|\boldsymbol{W}(t)|\right)^{\frac{1}{2}}]^2\leq C\|\u(t)\|_{\H^{\frac{d}{2}+1}}\left(-\log|\boldsymbol{W}(t)\right)^{\frac{1}{2}}.
\end{align}
Integrating the above inequality from $t_0$ to $t>t_0$, we find  
\begin{align}\label{375}
	-\left(-\log|\boldsymbol{W}(t)|\right)^{\frac{1}{2}}\leq 	-\left(-\log|\boldsymbol{W}(t_0)|\right)^{\frac{1}{2}}+C\int_{t_0}^t\|\u(r)\|_{\H^{\frac{d}{2}+1}}\d r.
\end{align}
Thus, if  $\u\in\mathrm{L}^1(0,T;\H^{\frac{d}{2}+1}_p(\T^d))$,   then we can take  $t_0 = 0$ in \eqref{375} and immediately obtain uniqueness. However, such a regularity for $\u$ is not known to be true for NSE, nor even for solutions of the heat equation.

It has been proved in \cite[Theorem 3.2.1]{MDJC}  that if $$\u\in\mathrm{L}^2(0,T;\H^{\frac{d}{2}-1}(\Omega))\ \text{ and }\ \sqrt{t}\u\in \mathrm{L}^2(0,T;\H^{\frac{d}{2}+1}(\Omega)),$$ then the ODE \eqref{372} has a unique solution for every $\boldsymbol{X}_0\in\mathbb{T}^d$. One can weaken the assumption on $\u$ to $\u\in\mathrm{L}^p(0,T;\H^{\frac{d}{2}-1}(\Omega))$ for some $p>1$ (\cite{MDJC1}). Moreover, from \cite[Theorem 3.2.2]{MDJC}, we can obtain the continuity with respect to the initial data. 
Suppose that $\u_n\to \u$ strogly in $\mathrm{L}^2(0,T;\H^{\frac{d}{2}-1}(\Omega))$ and that $\sqrt{t}\u_n$ is bounded in $\mathrm{L}^2(0,T;\H^{\frac{d}{2}+1}(\Omega)).$ For some $\boldsymbol{X}_0\in\mathbb{T}^d$, let $\boldsymbol{X}_n(t)$ be the unique solution of
\begin{equation}
	{\boldsymbol{X}'_n}=\u_n(\boldsymbol{X}_n,t), \
		\boldsymbol{X}_n(0)=\boldsymbol{X}_0.
\end{equation}
Then $\boldsymbol{X}_n(t)\to\boldsymbol{X}(t)$ uniformly on $[0,T]$, where $\boldsymbol{X}(t)$  solves \eqref{372}. A similar result to the following theorem for 2D NSE in periodic domains is obtained in \cite{MDJC}, and in bounded domains it is established in \cite{MDJC1}.
\begin{theorem}\label{thm3.11}
	If 
	\begin{equation}
		\left\{
	\begin{aligned}
	&	\x\in {\L}^2_p(\T^2), \f\in \mathrm{L}^2_p(0,T; {\L}^2(\T^2))\ \text{ for } \ d=2,r\in[1,\infty),\\
	&	\x\in {\H}^1_p(\T^3), \f\in\mathrm{L}^2_p(0,T; {\H}^{\frac{1}{2}}(\T^3))\ \text{ for }\ d=3,r\in[3,5]\  (2\beta\mu\geq 1\ \text{ for }\ r=3), \\
	&\x\in {\H}_p^2(\T^3),\f\in\W^{1,2}(0,T; {\L}^2_p(\T^3))\cap\mathrm{L}^{2}(0,T; {\H}_p^{\frac{1}{2}}(\T^3))\ \text{ for }\ d=3, r\in(5,\infty),
	\end{aligned}
\right.
\end{equation}
and 	$\u(t)$ is the corresponding solution of the 2D and 3D CBF equations \eqref{1} on $[0, T ]$, respectively,  then the solution $\boldsymbol{X}(\cdot)$  of \eqref{372} is unique. Furthermore, for each fixed $\boldsymbol{X}_0\in\mathbb{T}^d,$ the map $\x \mapsto\boldsymbol{X}(\cdot)$   is continuous from $ {\H}_p^{d-2}(\mathbb{T}^d)$  ($ {\H}_p^{2}(\T^3)$ for $d=3$ and $r\in(5,\infty)$) into $\C([0, T ];\R^d).$
\end{theorem}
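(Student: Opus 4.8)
The plan is to reduce the statement to the abstract criterion recalled just above: by \cite[Theorems 3.2.1 and 3.2.2]{MDJC1}, once we know that the Eulerian velocity satisfies $\u\in\mathrm{L}^2(0,T;\mathring{\H}^{\frac{d}{2}-1}_p)$ together with $\sqrt{t}\,\u\in\mathrm{L}^2(0,T;\mathring{\H}^{\frac{d}{2}+1}_p)$, the ODE \eqref{372} has a unique solution $\boldsymbol{X}(\cdot)$; and if in addition the solution map $\x\mapsto\u$ is continuous in the appropriate topology, the continuity of $\x\mapsto\boldsymbol{X}(\cdot)$ follows. Hence the entire task is to verify these two regularity properties of the CBF solution in each of the three regimes, together with the continuous dependence; the ODE analysis itself is already contained in \eqref{373}--\eqref{375}.

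First I would settle the lower-order membership $\u\in\mathrm{L}^2(0,T;\mathring{\H}^{\frac{d}{2}-1}_p)$. For $d=2$ this is $\u\in\mathrm{L}^2(0,T;\H)$, immediate from $\u\in\C([0,T];\H)$ and the energy equality \eqref{211}. For $d=3$, $r\in[3,5]$, it reads $\u\in\mathrm{L}^2(0,T;\D(\A^{1/4}))$; since $\x\in\V$, the $\A\u$-test estimate (now run from $t=0$, as $\x\in\V$) gives $\u\in\mathrm{L}^{\infty}(0,T;\V)=\mathrm{L}^{\infty}(0,T;\D(\A^{1/2}))$, and $\D(\A^{1/2})\hookrightarrow\D(\A^{1/4})$ closes it. For $r\in(5,\infty)$ the stronger datum $\x\in\D(\A)$ gives at least this much.

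The heart is the weighted top regularity $\sqrt{t}\,\u\in\mathrm{L}^2(0,T;\mathring{\H}^{\frac{d}{2}+1}_p)$. For $d=2$ this is $\int_0^T t\,\|\A\u(t)\|_{\H}^2\,\d t<\infty$, which I would obtain by multiplying the $\A\u$-testing identity \eqref{03} by $t$ and integrating, bounding the boundary term $\tfrac{t}{2}\|\u(t)\|_{\V}^2$ via $\|\u(t)\|_{\V}^2\lesssim t^{-1}$ from \eqref{16} and the remaining term via $\int_0^T\|\u\|_{\V}^2\,\d t\le K/\mu$ from \eqref{02}. For $d=3$ the requirement is $\sqrt{t}\,\u\in\mathrm{L}^2(0,T;\D(\A^{5/4}))$, one half-derivative above the $\D(\A)$-regularity \eqref{05}. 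In the range $r\in[3,5]$, with $\x\in\V$ and $\f\in\mathrm{L}^2(0,T;\mathring{\H}^{1/2})$, I would derive this by a $t$-weighted higher-order energy estimate, the weight compensating for the gap between $\x\in\D(\A^{1/2})$ and the trace space $\D(\A^{3/4})$ naturally associated with $\mathrm{L}^2(0,T;\D(\A^{1/4}))$ data, the nonlinear and absorption terms closing through $\u\in\mathrm{L}^2(\epsilon,T;\D(\A))\cap\mathrm{L}^{r+1}(\epsilon,T;\wi\L^{3(r+1)})$ from \eqref{05} and \eqref{18}. For $r\in(5,\infty)$ the fast growth of $\mathcal{C}(\u)=\mathcal{P}(|\u|^{r-1}\u)$ obstructs a direct high-order test, so there I would use the datum $\f\in\W^{1,2}(0,T;\mathring{\L}^2_p)$ to invoke the time-derivative bounds \eqref{327}, \eqref{011} and \eqref{334}, and then trade the time derivative for space regularity through $\mu\A\u=\f-\partial_t\u-\B(\u)-\alpha\u-\beta\mathcal{C}(\u)$: applying $\A^{1/4}$, it suffices to control $\A^{1/4}$ of each term in the $\sqrt{t}$-weighted $\mathrm{L}^2(0,T;\H)$ norm, with $\A^{1/4}\mathcal{C}(\u)$ and $\A^{1/4}\B(\u)$ estimated by fractional Leibniz and Sobolev inequalities using $\x\in\D(\A)\hookrightarrow\wi\L^{\infty}$.

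For the continuity assertion I would prove that $\x_n\to\x$ in $\mathring{\H}^{d-2}_p$ forces $\u_n\to\u$ strongly in $\mathrm{L}^2(0,T;\mathring{\H}^{\frac{d}{2}-1}_p)$, through the continuous-dependence Gronwall estimate of Step 1 of Theorem \ref{thm3.1} (for $d=3$ combined with the uniform $\V$-bound and interpolation), while the uniform a priori estimates of the previous paragraph, applied to the bounded sequence $\{\x_n\}$, give $\{\sqrt{t}\,\u_n\}$ bounded in $\mathrm{L}^2(0,T;\mathring{\H}^{\frac{d}{2}+1}_p)$; \cite[Theorem 3.2.2]{MDJC1} then delivers $\boldsymbol{X}_n\to\boldsymbol{X}$ uniformly on $[0,T]$. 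I expect the main obstacle to be exactly the weighted $\D(\A^{5/4})$ bound in three dimensions for $r\in(5,\infty)$: there the time-derivative control itself rests on the delicate second-order estimate \eqref{334}, and one must check that the fractional product bound for $\A^{1/4}\mathcal{C}(\u)$ genuinely closes with the available weighted integrability of $\|\u\|_{\wi\L^{3(r+1)}}$ from \eqref{18} --- which is precisely why the theorem demands the stronger data $\x\in\mathring{\H}^2_p$, $\f\in\W^{1,2}(0,T;\mathring{\L}^2_p)\cap\mathrm{L}^2(0,T;\mathring{\H}^{1/2}_p)$ in that regime.
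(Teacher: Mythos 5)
Your proposal is correct and, at its core, follows the same route as the paper: reduce everything to the Dashti--Robinson criteria (membership $\u\in\mathrm{L}^2(0,T;\mathring{\H}^{\frac{d}{2}-1}_p)$, the weighted bound $\sqrt{t}\,\u\in\mathrm{L}^2(0,T;\mathring{\H}^{\frac{d}{2}+1}_p)$, and, for continuity, strong convergence of $\u_n$ in the lower norm plus uniform weighted bounds), and verify these by energy estimates --- in 2D from \eqref{02}, \eqref{16}, \eqref{05}; in 3D for $r\in[3,5]$ by a $t$-weighted estimate one half-derivative above $\D(\A)$, which the paper realizes concretely by testing with $t\A^{\frac{3}{2}}\u$ and closing via fractional Leibniz and Gronwall in \eqref{3p80}--\eqref{385}; and for $r\in(5,\infty)$ by converting the time-derivative bounds into spatial regularity through the equation. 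In that last regime your mechanics differ slightly but harmlessly: you apply $\A^{1/4}$ to the equation and bound each term, while the paper first extracts the uniform bound $\u\in\mathrm{L}^{\infty}(0,T;\D(\A))$ from \eqref{380}--\eqref{382} and then reuses the weighted estimate \eqref{384}; both close with the stated data. The one genuinely different ingredient is the continuity argument: the paper gets $\u_n\to\u$ strongly in $\mathrm{L}^2(0,T;\mathring{\H}_p^{\frac{d}{2}-1})$ via the Aubin--Lions compactness theorem (uniform bounds on $\u_n$ in $\mathrm{L}^2(0,T;\mathring{\H}_p^{1})$ and on $\frac{\d\u_n}{\d t}$ in $\mathrm{L}^{\frac{r+1}{r}}(0,T;\mathring{\H}_p^{-1}+\L^{\frac{r+1}{r}})$, then uniqueness of the limit), whereas you use the continuous-dependence Gronwall estimate of Step 1 of Theorem \ref{thm3.1} combined with the uniform $\V$-bound and interpolation; your route is if anything more direct, since it gives convergence of the whole sequence without a subsequence extraction.

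One point you should correct before this becomes a proof: in the $r\in[3,5]$ weighted estimate you propose to close the nonlinear and absorption terms "through $\u\in\mathrm{L}^2(\epsilon,T;\D(\A))\cap\mathrm{L}^{r+1}(\epsilon,T;\wi\L^{3(r+1)})$ from \eqref{05} and \eqref{18}". Those are the interior estimates, whose bounds blow up like $1/\epsilon$ as $\epsilon\to0$; with only these, the Gronwall coefficients (of the type $\|\u\|_{\V}\|\A\u\|_{\H}$ and $\|\u\|_{\wi\L^{r+1}}^{2}\|\u\|_{\wi\L^{3(r+1)}}^{2(r-2)}$) need not be integrable down to $t=0$, and the weighted estimate would not close. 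What is actually needed --- and what the hypothesis $\x\in\mathring{\H}^1(\T^3)$ supplies, as you yourself exploit in the lower-order part --- is the global-in-time bound \eqref{3.78}, giving $\u\in\mathrm{L}^2(0,T;\D(\A))$ and, via Remark \ref{rem2.1}, $\u\in\mathrm{L}^{r+1}(0,T;\wi\L^{3(r+1)})$ from $t=0$. With that substitution your argument coincides with the paper's proof.
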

\begin{proof}
For  $d=2$, $r\in[1,\infty)$, $\x\in\H$ and $\f\in\mathrm{L}^2(0,T;\H)$, there exists a unique weak  solution $\u\in\mathrm{C}([0,T];\H)\cap\mathrm{L}^2(0,T;\V)\cap\mathrm{L}^{r+1}(0,T;\wi\L^{r+1})$ to the system \eqref{555}.  From the estimates \eqref{16} and \eqref{05},  it is clear that $\sqrt{t}\u\in\mathrm{L}^{\infty}(0,T;\V)\cap\mathrm{L}^2(0,T;\D(\A))$. Therefore,  we obtain $\sqrt{t}\u\in\mathrm{L}^2(0,T; {\H}^{2}(\mathbb{T}^2))$, since $\|\A\u\|_{\H}\leq C\|\u\|_{ {\H}_p^2}$. Hence \cite[Theorem 3.2.1]{MDJC} now guarantees the uniqueness of the Lagrangian trajectories corresponding to weak solutions of the equations in two dimensions. For the continuity with respect to the initial data, \cite[Theorem 3.2.2]{MDJC} requires uniform estimates for  $\sqrt{t}\u_n\in\mathrm{L}^2(0,T; {\H}_p^{2}(\mathbb{T}^2))$, when $\u_n(0)\to\x$  strongly in $ {\L}^2_p(\T^2)$. These follow immediately from \eqref{02} and \eqref{05} since $\u_n(0)$ is  uniformly bounded in $ {\L}^2_p(\T^2)$. The strong convergence of $\u_n\to\u$ in $\mathrm{L}^2 (0, T ;  {\L}^2_p(\T^2))$   follows along an appropriate subsequence  by an application of the Aubin-Lions compactness theorem (\cite[Theorem 8.1]{JCR_2001}) from the uniform bounds on $\u_n\in\mathrm{L}^2(0, T ;  {\H}_p^1(\T^2))$ and on $\frac{\d\u_n}{\d t}\in \mathrm{L}^{\frac{r+1}{r}}(0,T; {\H}_p^{-1}(\T^2))$.

For the case $d=3$, $r\in[3,5]$,  $\x\in\H$ and $\f\in\mathrm{L}^2(0,T; {\H}_p^{\frac{1}{2}}(\T^3))$, there exists a unique strong solution $\u\in\mathrm{C}([0,T];\V)\cap\mathrm{L}^2(0,T;\D(\A))\cap\mathrm{L}^{r+1}(0,T;\wi\L^{3(r+1)})$ to the system \eqref{555}. We need to show that $\sqrt{t}\u\in\mathrm{L}^2(0,T; {\H}_p^{\frac{5}{2}}(\T^d))$. Taking the inner product with $\A\u(\cdot)$  to the first equation in \eqref{555}  and calculations similar to \eqref{05}  and \eqref{016}  yield 
\begin{align}\label{3.78}
&	\|\nabla\u(t)\|_{\H}^2+\mu\int_0^t\|\A\u(s)\|_{\H}^2+\beta\int_0^t\||\u(s)|^{\frac{r-1}{2}}|\nabla\u(s)|\|_{\H}^2\d s\leq \|\nabla\x\|_{\H}^2+\frac{2\vartheta K}{\mu^2}+\frac{2}{\mu}\int_0^T\|\f(t)\|_{\H}^2\d t,
\end{align}
for all $t\in[0,T]$, where $K$ is defined in \eqref{02}. Taking the inner product with $t\A^{\frac{3}{2}}\u$ to the first equation in \eqref{555}, we find 
\begin{align}\label{3p80}
	&	\frac{1}{2}\frac{\d}{\d t}\left(t\|\A^{\frac{3}{4}}\u(t)\|_{\H}^2\right)+\mu t \|\A^{\frac{5}{4}}\u(t)\|_{\H}^2+\alpha t \|\A^{\frac{3}{4}}\u(t)\|_{\H}^2\nonumber\\&=\frac{1}{2}\|\A^{\frac{3}{4}}\u(t)\|_{\H}^2+(\f(t),t\A^{\frac{3}{2}}\u(t))-(\B(\u(t)),t\A^{\frac{3}{2}}\u(t))-\beta(\mathcal{C}(\u(t)),t\A^{\frac{3}{2}}\u(t)),
\end{align}
for a.e. $t\in[0,T]$. We estimate $|(\f,\A^{\frac{3}{2}}\u)|$ using H\"older's and Young's inequalities as 
\begin{align}\label{381}
	|(\f,\A^{\frac{3}{2}}\u)|\leq\|\A^{\frac{1}{4}}\f\|_{\H}\|\A^{\frac{5}{4}}\u\|_{\H}\leq\frac{\mu}{4}\|\A^{\frac{5}{4}}\u\|_{\H}^2+\frac{1}{\mu}\|\A^{\frac{1}{4}}\f\|_{\H}^2. 
\end{align}
We estimate $|(\B(\u),\A^{\frac{3}{2}}\u)|$  using fractional Leibniz rule (\cite[Theorem 1]{LGSO}), H\"older's, Agmon's and Young's inequalities as 
\begin{align}\label{3p82}
	|(\B(\u),\A^{\frac{3}{2}}\u)|&\leq \|\A^{\frac{1}{4}}\B(\u)\|_{\H}\|\A^{\frac{5}{4}}\u\|_{\H}\leq C\|\A^{\frac{3}{4}}(\u\otimes\u)\|_{\H}\|\A^{\frac{5}{4}}\u\|_{\H}\nonumber\\&\leq C\|\u\|_{\wi\L^{\infty}}\|\A^{\frac{3}{4}}\u\|_{\H}\|\A^{\frac{5}{4}}\u\|_{\H}\leq C\|\u\|_{\V}^{\frac{1}{2}}\|\u\|_{\H^2}^{\frac{1}{2}}\|\A^{\frac{3}{4}}\u\|_{\H}\|\A^{\frac{5}{4}}\u\|_{\H}\nonumber\\&\leq \frac{\mu}{8}\|\A^{\frac{5}{4}}\u\|_{\H}^2+\frac{C}{\mu}\|\u\|_{\V}\|\u\|_{\H^2}\|\A^{\frac{3}{4}}\u\|_{\H}^2. 
\end{align}
Similarly, we estimate $\beta|(\mathcal{C}(\u),\A^{\frac{3}{2}}\u)|$ using Theorem A.6, \cite{CEK}, H\"older's  Gagliardo-Nirenberg's and Young's inequalities as 
\begin{align}\label{383}
	\beta|(\mathcal{C}(\u),\A^{\frac{3}{2}}\u)|&\leq \beta\|\A^{\frac{1}{4}}\mathcal{C}(\u)\|_{\H}\|\A^{\frac{5}{4}}\u\|_{\H}\leq C\beta\|\u\|_{\wi\L^{3(r-1)}}^{r-1}\|\A^{\frac{1}{4}}\u\|_{\wi\L^{6}}\|\A^{\frac{5}{4}}\u\|_{\H}\nonumber\\&\leq C\beta\|\u\|_{\wi\L^{r+1}}\|\u\|_{\wi\L^{3(r+1)}}^{(r-2)}\|\A^{\frac{3}{4}}\u\|_{\H}\|\A^{\frac{5}{4}}\u\|_{\H}\nonumber\\&\leq\frac{\mu}{8}\|\A^{\frac{5}{4}}\u\|_{\H}^2+\frac{C\beta^2}{\mu}\|\u\|_{\wi\L^{r+1}}^{2}\|\u\|_{\wi\L^{3(r+1)}}^{2(r-2)}\|\A^{\frac{3}{4}}\u\|_{\H}^2.
\end{align}
Combining \eqref{381}-\eqref{383} and substituting it in \eqref{3p80}, we deduce 
\begin{align}\label{384}
&	t\|\A^{\frac{3}{4}}\u(t)\|_{\H}^2+\mu \int_0^ts\|\A^{\frac{5}{4}}\u(s)\|_{\H}^2\d s+2\alpha \int_0^ts \|\A^{\frac{3}{4}}\u(s)\|_{\H}^2\d s\nonumber\\&\leq \int_0^t\|\u(s)\|_{\V}\|\A\u(s)\|_{\H}\d s+\frac{2}{\mu}\int_0^ts\|\A^{\frac{1}{4}}\f(s)\|_{\H}^2\d s+\frac{C}{\mu}\int_0^ts\|\u(s)\|_{\V}\|\u(s)\|_{\H^2}\|\A^{\frac{3}{4}}\u(s)\|_{\H}^2\d s\nonumber\\&\quad+\frac{C\beta^2}{\mu}\int_0^ts\|\u(s)\|_{\wi\L^{r+1}}^{2}\|\u(s)\|_{\wi\L^{3(r+1)}}^{2(r-2)}\|\A^{\frac{3}{4}}\u(s)\|_{\H}^2\d s,
\end{align}
for all $t\in[0,T]$. An application of Growall's inequality in \eqref{384} yields 
\begin{align}\label{385}
	&	t\|\A^{\frac{3}{4}}\u(t)\|_{\H}^2+\mu \int_0^ts\|\A^{\frac{5}{4}}\u(s)\|_{\H}^2\d s\nonumber\\&\leq \left\{T^{\frac{1}{2}}\sup\limits_{t\in[0,T]}\|\u(t)\|_{\V}\left(\int_0^T\|\A\u(t)\|_{\H}^2\d t\right)^{\frac{1}{2}}+\frac{2T}{\mu}\int_0^T\|\A^{\frac{1}{4}}\f(t)\|_{\H}^2\d t\right\}\nonumber\\&\qquad\times\exp\left\{\frac{CT^{\frac{1}{2}}}{\mu}\sup\limits_{t\in[0,T]}\|\u(t)\|_{\V}\left(\int_0^T\|\u(t)\|_{\H^2}^2\d t\right)^{\frac{1}{2}}\right\}\nonumber\\&\qquad\times\exp\left\{\frac{C\beta^2}{\mu}T^{\frac{5-r}{r+1}}\sup_{t\in[0,T]}\|\u(t)\|_{\V}^{2}\left(\int_0^T\|\u(t)\|_{\wi\L^{3(r+1)}}^{r+1}\d t\right)^{\frac{2(r-2)}{r+1}}\right\},
\end{align}
for all $t\in[0,T]$ and $r\in[3,5]$. From the estimate \eqref{3.78}, it is clear that $\sqrt{t}\u\in\mathrm{L}^2(0,T;\D(\A^{\frac{5}{4}}))$, as required. Standard energy estimate (see \eqref{02}) shows that $\u_n$ is uniformly bounded in $\mathrm{L}^2(0, T ; {\H}_p^1(\mathbb{T}^3))$, and $\frac{\d\u_n}{\d t}$ is uniformly bounded in $\mathrm{L}^{\frac{r+1}{r}}(0, T ;  {\H}_p^{-1}(\mathbb{T}^3)+\L^{\frac{r+1}{r}}(\T^d))$ (this is true even if $\x\in {\L}_p^2(\T^3)$). Since for $0\leq s<1$, $ {\H}_p^1(\T^3)\subset {\H}_p^s(\T^3)\subset {\H}_p^{-1}(\mathbb{T}^3)+\L^{\frac{r+1}{r}}(\T^d))$ and the first embedding is compact, it follows from the Aubin-Lions compactness theorem that $\u_n\to\u$ strongly in $\mathrm{L}^2(0, T ;  {\H}_p^s(\T^3))$ for any $s < 1$. In particular $\u_n \to \u$ strongly in $\mathrm{L}^2(0, T ;  {\H}_p^{\frac{1}{2}}(\T^3))$.  Thus, we have  the continuity with respect to the initial data by applying \cite[Theorem 3.2.1]{MDJC}.

For $d=3$, $r\in(5,\infty)$, 
taking the inner product with $\frac{\d\u}{\d t}$ to the first equation in \eqref{555} and calculations similar to \eqref{5p48} and \eqref{09} results to 
\begin{align}
&	\mu \|\nabla\u(t)\|_{\H}^2+\alpha\|\u(t)\|_{\H}^2+\frac{2\beta}{r+1}\|\u(t)\|_{\wi\L^{r+1}}^{r+1}+\int_0^t\left\|\frac{\d\u(s)}{\d t}\right\|_{\H}^2\d s\nonumber\\&\leq 2\mu\|\nabla\x\|_{\H}^2+\alpha\|\x\|_{\H}^2+\frac{2\beta}{r+1}\|\x\|_{\wi\L^{r+1}}^{r+1}+\frac{4\vartheta K}{\mu}+2\int_0^T\|\f(t)\|_{\H}^2\d t,
	\end{align}
for all $t\in[0,T]$. Taking the inner product with $\v(\cdot)$ to the first equation in \eqref{324} and calculations similar to \eqref{327} provide 
\begin{align}\label{380}
	\|\v(t)\|_{\H}^2&\leq \left\{\|\v(0)\|_{\H}^2+\frac{2}{\mu}\int_0^T\|\f_t(t)\|_{\V'}^2\d t\right\}e^{\frac{2\vartheta KT}{\mu}}\nonumber\\&\leq \bigg\{\left(\mu\|\A\x\|_{\H}+\|\x\|_{\V}^{\frac{3}{2}}\|\A\x\|_{\H}^{\frac{1}{2}}+\alpha\|\x\|_{\H}+\beta\|\x\|_{\wi\L^{2r}}^{r}+\|\f(0)\|_{\H}\right)^2\nonumber\\&\qquad+\frac{2}{\mu\lambda_1}\int_0^T\|\f_t(t)\|_{\H}^2\d t\bigg\}e^{\frac{2\vartheta KT}{\mu}}=K_1,
\end{align}
for all $t\in[0,T]$, where $\v=\frac{\d\v}{\d t}$. Note that $\f\in\W^{1,2}(0,T;\H)$ implies $\f\in\C([0,T];\H)$ also.  It follows from \eqref{555} that 
\begin{align}\label{3p78}
	&\mu \|\A\u\|_{\H}^2+\alpha \|\nabla\u\|_{\H}^2+\beta \||\u|^{\frac{r-1}{2}}|\nabla\u|\|_{\H}^2+4\beta \left[\frac{(r-1)}{(r+1)^2}\right]\|\nabla|\u|^{\frac{r+1}{2}}\|_{\H}^2\nonumber\\&=-\left(\frac{\d\u}{\d t},\A\u\right)-(\B(\u),\A\u)\leq\left\|\frac{\d\u}{\d t}\right\|_{\H}\|\A\u\|_{\H}+\|\u\|_{\wi\L^{\infty}}\|\u\|_{\V}\|\A\u\|_{\H}
	\nonumber\\ & \leq\left\|\frac{\d\u}{\d t}\right\|_{\H}\|\A\u\|_{\H}+  \|\u\|^{\frac12}_{\V}\|(\I+\A)\u\|^{\frac12}_{\H}\|\u\|_{\V}\|\A\u\|_{\H}
	\nonumber\\&\leq\frac{\mu }{2}\|\A\u\|_{\H}^2+\frac{C }{\mu}\|\u\|_{\V}^6+\frac{1}{\mu}\left\|\frac{\d\u}{\d t}\right\|_{\H}^2.
\end{align}
Therefore, we deduce from \eqref{3.78} and \eqref{380} that 
\begin{align}\label{382}
	&\mu\|\A\u(t)\|_{\H}^2+\alpha \|\nabla\u\|_{\H}^2+\beta \|\u(t)\|_{\wi\L^{3(r+1)}}^{r+1}\leq \frac{C}{\mu}\left\{\|\x\|_{\V}^2+\frac{2\vartheta K}{\mu^2}+\frac{2}{\mu}\int_0^T\|\f(t)\|_{\H}^2\d t\right\}^3+\frac{K_1}{\mu},
\end{align}
for all $t\in[0,T]$. Thus from \eqref{384} and  \eqref{382}, one can conclude the proof in this case also. 
\end{proof}

		\section{The Stochastic Case}\label{Sec5}\setcounter{equation}{0}
	Let $(\Omega,\mathscr{F},\mathbb{P})$ be a complete probability space equipped with an increasing family of sub-sigma fields $\{\mathscr{F}_t\}_{0\leq t\leq T}$ of $\mathscr{F}$ satisfying:
	\begin{enumerate}
		\item [(i)] $\mathscr{F}_0$ contains all elements $F\in\mathscr{F}$ with $\mathbb{P}(F)=0$,
		\item [(ii)] $\mathscr{F}_t=\mathscr{F}_{t+}=\bigcap\limits_{s>t}\mathscr{F}_s,$ for $0\leq t\leq T$.
	\end{enumerate} 
Let $\{\W(t)\}_{t\in[0,T]}$ be a one-dimensional real-valued Brownian motion on $(\Omega,\mathscr{F},\{\mathscr{F}_t\}_{0\leq t\leq T},\mathbb{P}).$ 
	We consider the following stochastic CBF equations with a linear multiplicative noise: 
		\begin{equation}\label{36}
		\left\{
		\begin{aligned}
			\d\u(t)+[\mu \A\u(t)+\B(\u(t))+\alpha\u(t)+\beta\mathcal{C}(\u(t))]\d t&=\f(t)
			\d t+\sigma\u(t)\d\W(t), \\
			\u(0)&=\boldsymbol{x},
		\end{aligned}\right. 
	\end{equation} 
where  $\sigma\in\mathbb{R}\backslash\{0\}$ and $\f\in\mathrm{L}^2(0,T;\H)$ is a deterministic external forcing. Note that $z(t)=e^{-\sigma\W(t)} \in\C([0,T];\mathbb{R}),$ $\mathbb{P}$-a.s.  satisfies 
\begin{equation*}
	\left\{
\begin{aligned}
	\d z(t)&=-\sigma z(t)\d\W(t)+\frac{\sigma^2}{2}z(t)\d t, \\ z(0)&=1.
\end{aligned}
\right.
\end{equation*}
Using the transformation $\v(t)=\u(t)z(t)$, we obtain the following random dynamical system: 
	\begin{equation}\label{37}
	\left\{
	\begin{aligned}
		\frac{\d\v(t)}{\d t}+\mu \A\v(t)+\frac{1}{z(t)}\B(\v(t))+\left(\alpha+\frac{\sigma^2}{2}\right)\v(t)+\frac{\beta}{[z(t)]^{r-1}}\mathcal{C}(\v(t))&=z(t)\boldsymbol{f}(t), \\
		\v(0)&=\boldsymbol{x}.
	\end{aligned}\right. 
\end{equation}
As $z(t)\in\C([0,T];\mathbb{R}),$ $\mathbb{P}$-a.s.,  for $\x\in\H$ and $\f\in\mathrm{L}^2(0,T;\H)$, the existence and uniqueness of weak solution to the system \eqref{37} can be proved in a similar way as in \cite{SNA,KWH,MT1}. Our aim is to prove the pathwise backward uniqueness of the system \eqref{36}. 

\begin{theorem}[Backward uniqueness]\label{thm4.1}
		Let $\x\in\H$, $\f\in\mathrm{L}^{2}(0,T;\H)$ and  $\u_1,\u_2$ satisfy the first equation in the system \eqref{36}. For $d=2,r\in[1,\infty)$ and $d=3,r\in[3,5]$ ($2\beta\mu\geq 1$ for $d=r=3$), if $\u_1(T)=\u_2(T)$ in $\H$, then $\u_1(t)=\u_2(t)$ in $\H,$ $\mathbb{P}$-a.s., for all $t\in[0,T].$ 
\end{theorem}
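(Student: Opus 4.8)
The plan is to transfer the problem to the random system \eqref{37} and run a logarithmic-convexity argument built around the \emph{self-adjoint} operator $\widehat{\mathcal{A}}\v=\mu\A\v+\left(\alpha+\frac{\sigma^2}{2}\right)\v$, treating both the convective term and the Forchheimer term as a perturbation. Since $z(t)=e^{-\sigma\W(t)}\in\C([0,T];\mathbb{R})$ is pathwise continuous and strictly positive on $[0,T]$, $\mathbb{P}$-a.s., the relation $\u_i(t)=\v_i(t)/z(t)$ shows that $\u_1(t)=\u_2(t)$ in $\H$ if and only if $\v_1(t)=\v_2(t)$ in $\H$; in particular $\u_1(T)=\u_2(T)$ is equivalent to $\v_1(T)=\v_2(T)$. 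Thus it suffices to fix a realisation $\omega$ and prove that $\v_1(T)=\v_2(T)$ forces $\v_1\equiv\v_2$ on $[0,T]$.

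First I would record, pathwise, the smoothing estimates for $\v_i$. Because $z(\cdot,\omega)$ is bounded above and bounded away from $0$ on $[0,T]$, the random coefficients $1/z$ and $\beta/z^{r-1}$ are bounded, so the deterministic parabolic estimates of Step~2 of Theorem \ref{thm3.1} (the counterparts of \eqref{16}, \eqref{05} and \eqref{18}) carry over verbatim and give $\v_i\in\C((0,T];\V)\cap\mathrm{L}^2(\epsilon,T;\D(\A))\cap\mathrm{L}^{r+1}(\epsilon,T;\wi\L^{3(r+1)})$ for every $\epsilon>0$. Deliberately, I would \emph{not} attempt to reproduce the time-derivative estimates \eqref{327}, \eqref{011} and \eqref{334}: differentiating \eqref{36} in $t$ reintroduces the noise, so these are unavailable, which is exactly why the nonlinear ratio \eqref{13} cannot be used here. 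Setting $\v=\v_1-\v_2$ and subtracting the two copies of \eqref{37}, one gets, for a.e.\ $t$, $\partial_t\v+\widehat{\mathcal{A}}\v=\widehat{h}(\v)$ with $\v(T)=\boldsymbol{0}$, where $\widehat{h}(\v)=-\frac{1}{z}[\B(\v_1,\v)+\B(\v,\v_2)]-\frac{\beta}{z^{r-1}}\int_0^1\mathcal{C}'(\theta\v_1+(1-\theta)\v_2)\d\theta\,\v$.

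Next I would introduce the Dirichlet quotient $\widehat{\Lambda}(t)=\langle\widehat{\mathcal{A}}\v,\v\rangle/\|\v\|_{\H}^2\geq\mu\|\v\|_{\V}^2/\|\v\|_{\H}^2$ and compute its derivative. Because $\widehat{\mathcal{A}}$ is self-adjoint, one has $\langle\widehat{\mathcal{A}}\v-\widehat{\Lambda}\v,\v\rangle=0$ and the computation is free of the $\mathcal{C}''$-contribution that burdens \eqref{016}; using $\partial_t\v=-\widehat{\mathcal{A}}\v+\widehat{h}(\v)$ it reduces, exactly as in \eqref{46}, to
\[
\frac{1}{2}\frac{\d\widehat{\Lambda}}{\d t}+\frac{\|\widehat{\mathcal{A}}\v-\widehat{\Lambda}\v\|_{\H}^2}{\|\v\|_{\H}^2}=\frac{\langle\widehat{\mathcal{A}}\v-\widehat{\Lambda}\v,\widehat{h}(\v)\rangle}{\|\v\|_{\H}^2}\leq\frac{1}{2}\frac{\|\widehat{\mathcal{A}}\v-\widehat{\Lambda}\v\|_{\H}^2}{\|\v\|_{\H}^2}+\frac{1}{2}\frac{\|\widehat{h}(\v)\|_{\H}^2}{\|\v\|_{\H}^2}.
\]
The whole matter then comes down to bounding $\|\widehat{h}(\v)\|_{\H}^2$ by an integrable multiple of $\|\v\|_{\V}^2$, so that the right-hand side is $\leq k(t)\widehat{\Lambda}$ with $k\in\mathrm{L}^1(\epsilon_1,T)$.

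The crux is the estimate of $\|\widehat{h}(\v)\|_{\H}$. The convective part is controlled as in \eqref{018}--\eqref{019} by $C(\|\v_1\|_{\V}\|\A\v_1\|_{\H}+\|\v_2\|_{\V}\|\A\v_2\|_{\H})\|\v\|_{\V}^2$, which is integrable by the $\mathrm{L}^2(\epsilon_1,T;\D(\A))$ bound (for $d=r=3$ this requires $2\beta\mu\geq1$, inherited from the regularity step \eqref{08}), while the Forchheimer part is bounded via H\"older's, Gagliardo--Nirenberg's and Sobolev's inequalities as in \eqref{47} by $C(\|\v_1\|_{\V}^{2(r-1)}+\|\v_2\|_{\V}^{2(r-1)})\|\v\|_{\V}^2$. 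I expect \textbf{this last estimate to be the main obstacle}: closing it with only $\V\cap\D(\A)$ spatial regularity and no time-derivative control is possible only up to the Sobolev threshold that yields $r\le5$ in three dimensions, which is precisely why the result is restricted to $d=3$, $r\in[3,5]$ (for $r>5$ one would need the $\wi\L^{3(r+1)}$-type pairing of \eqref{017a} together with time-derivative estimates, which we lack). Once $\frac{\d\widehat{\Lambda}}{\d t}\le k(t)\widehat{\Lambda}$ is established, the variation-of-constants formula keeps $\widehat{\Lambda}$ finite on $[\epsilon_1,T]$. Finally, replicating \eqref{023}--\eqref{025} gives $-\frac{\d}{\d t}\log\|\v(t)\|_{\H}\le 2\widehat{\Lambda}(t)+(\text{integrable})$, whose right-hand side is integrable on any subinterval of $(\epsilon_1,T)$ where $\v$ does not vanish; the alternative argument of Step~3 of Theorem \ref{thm3.1}---a first zero $t_1\in(t_0,T]$ after a point $t_0$ with $\v(t_0)\neq\boldsymbol{0}$ would keep $\log\|\v(t_0)\|_{\H}$ bounded yet force $\log\|\v(t)\|_{\H}\to-\infty$ as $t\to t_1^-$---then rules out any nonvanishing point, forcing $\v\equiv\boldsymbol{0}$ on $[\epsilon_1,T]$. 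By continuity of $t\mapsto\|\v(t)\|_{\H}$ and the arbitrariness of $\epsilon_1>0$, $\v\equiv\boldsymbol{0}$ on $[0,T]$, and dividing by $z(t)>0$ returns $\u_1\equiv\u_2$ on $[0,T]$, $\mathbb{P}$-a.s.
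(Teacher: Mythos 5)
Your proposal follows essentially the same route as the paper's own proof: pathwise transfer to the random system \eqref{37}, regularity estimates \eqref{55}--\eqref{56} without any time-derivative control, the self-adjoint quotient $\widehat{\Lambda}$ with both the convective and Forchheimer terms treated as the perturbation $\widehat{h}(\v)$, the differential inequality \eqref{59}, variation of constants, and the same alternative/contradiction argument via $-\frac{\d}{\d t}\log\|\v(t)\|_{\H}$. One small imprecision, which you yourself flagged as the main obstacle: the Forchheimer bound $C\left(\|\v_1\|_{\V}^{2(r-1)}+\|\v_2\|_{\V}^{2(r-1)}\right)\|\v\|_{\V}^2$ quoted from \eqref{47} is only valid for $d=2$ (it needs $\V\subset\wi\L^{2(r+1)}$); for $d=3$, $r\in[3,5]$ the paper's \eqref{49} replaces it by $C\left(\|\v_1\|_{\wi\L^{r+1}}^{\frac{r-1}{2}}\|\v_1\|_{\wi\L^{3(r+1)}}^{\frac{3(r-1)}{2}}+\|\v_2\|_{\wi\L^{r+1}}^{\frac{r-1}{2}}\|\v_2\|_{\wi\L^{3(r+1)}}^{\frac{3(r-1)}{2}}\right)\|\v\|_{\V}^2$, whose time-integrability (from $\v_i\in\mathrm{L}^{r+1}(\epsilon,T;\wi\L^{3(r+1)})$) is exactly what enforces $r\leq 5$, the threshold you correctly anticipated.
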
 
\begin{proof}
Since $\v(t)=\u(t)z(t)$, it is enough to show that if $\v_1(T)=\v_2(T)$ in $\H$, then $\v_1(t)=\v_2(t)$ in $\H$  for all $t\in[0,T].$ Taking the inner product with $\v(\cdot)$ to the first equation in \eqref{37} and then integrating from $0$ to $T$, we find 
\begin{align*}
	&\|\v(t)\|_{\H}^2+2\mu\int_0^t\|\nabla\v(s)\|_{\H}^2\d s+2\left(\alpha+\frac{\sigma^2}{2}\right)\int_0^t\|\v(s)\|_{\H}^2\d s+2\beta\int_0^te^{\sigma(r-1)\W(s)}\|\v(s)\|_{\wi\L^{r+1}}^{r+1}\d s\nonumber\\&=\|\x\|_{\H}^2+2\int_0^t\langle z(s)\f(s),\v(s)\rangle\d s
	\nonumber\\ & \leq\|\x\|_{\H}^2+\min\{\mu,\alpha\}\int_0^t\|\v(s)\|_{\V}^2\d s+\frac{1}{\min\{\mu,\alpha\}}\sup_{s\in[0,t]}[z(s)]^2\int_0^t\|\f(s)\|_{\V'}^2\d s,
\end{align*}
for all $t\in[0,T]$. Therefore, we get 
\begin{align}\label{55}
	&\|\v(t)\|_{\H}^2+\mu\int_0^t\|\nabla\v(s)\|_{\H}^2\d s+\left(\alpha+\sigma^2\right)\int_0^t\|\v(s)\|_{\H}^2\d s+2\beta\int_0^te^{\sigma(r-1)\W(s)}\|\v(s)\|_{\wi\L^{r+1}}^{r+1}\d s\nonumber\\&\leq \|\x\|_{\H}^2+\frac{1}{\mu}\sup_{t\in[0,T]}e^{-2\sigma\W(t)}\int_0^T\|\f(t)\|_{\V'}^2\d t=\widetilde{K}.
\end{align}
Also note that 
\begin{align*}
	\int_0^T\|\v(t)\|_{\wi\L^{r+1}}^{r+1}\d t\leq\sup_{t\in[0,T]}e^{-\sigma(r-1)\W(t)}\int_0^Te^{\sigma(r-1)\W(t)}\|\v(t)\|_{\wi\L^{r+1}}^{r+1}\d t\leq\frac{C\widetilde{K}}{2\beta}.
\end{align*}
Moreover, one can show that $\v\in\mathrm{C}([0,T];\H)\cap\mathrm{L}^2(0,T;\V)\cap\mathrm{L}^{r+1}(0,T;\wi\L^{r+1})$.  	For $d=2,3$ and $r\in(3,\infty)$, calculations similar to \eqref{16}, \eqref{05} and \eqref{18} yield 
	\begin{align}\label{56}
&	\|\nabla\v(t)\|_{\H}^2+	\mu\int_{\epsilon}^t\|\A\v(s)\|_{\H}^2\d s+\beta	\int_{\epsilon}^te^{\sigma(r-1)\W(s)}\|\v(s)\|_{\wi\L^{3(r+1)}}^{r+1}\d s\nonumber\\&\leq \left\{\begin{array}{lc}C\left(\frac{\widetilde{K}}{2\mu  \epsilon}+\frac{2}{\mu}\sup\limits_{t\in[0,T]}e^{-2\sigma\W(t)}\int_0^T\|\f(t)\|_{\H}^2\d t\right),&\text{ for }d=2,\\
	C\left(	\frac{\widetilde{K}}{\mu \epsilon}+\frac{4\widetilde{K}\vartheta}{\mu^2}+\frac{4}{\mu}\sup\limits_{t\in[0,T]}e^{-2\sigma\W(t)}\int_0^T\|\f(t)\|_{\H}^2\d t\right),&\text{ for }d=3,\end{array}\right.
	\end{align}
for all $t\in[\epsilon,T]$, for any $\epsilon>0$. Therefore, we have  $\v\in\C((0,T];\V)\cap\mathrm{L}^2(\epsilon,T;\D(\A))\cap\mathrm{L}^{r+1}(\epsilon,T;\wi\L^{3(r+1)})$ for any $\epsilon>0$ by showing an estimate similar to \eqref{010}. 

Let us now prove the backward uniqueness result for the system \eqref{36}. Let $\v_1(\cdot)$ and $\v_2(\cdot)$ be two solutions of the system \eqref{37} with the same final data, say $\boldsymbol{\xi}$ and external forcing $\f\in\mathrm{L}^2(0,T;\H)$.  Then $\v=\v_1-\v_2$ satisfies the following system in $\V'+\wi\L^{\frac{r+1}{r}}$ for a.e. $t\in[0,T]$ and in $\H$ for a.e. $t\in[\epsilon,T]$, for any $0<\epsilon<T$:
\begin{align}
	\left\{
	\begin{aligned}
		\frac{\d  \v(t)}{\d  t}+\widehat{\mathcal{A}}\v(t)&=-\frac{1}{z(t)}[\B(\v_1(t),\v(t))+\B(\v(t),\v_2(t))]\\&\quad+\frac{\beta}{[z(t)]^{r-1}}\int_0^1\mathcal{C}'(\theta\v_1(t)+(1-\theta)\v_2(t))\d\theta\v(t)\bigg]=\widehat{h}(\v(t)),\\
		\v(T)&=\boldsymbol{0},
	\end{aligned}\right. 
\end{align}
where 
\begin{align}
	\widehat{\mathcal{A}}\v:=\mu\A\v+\left(\alpha+\frac{\sigma^2}{2}\right)\v.
\end{align}
Our aim is to show that if $\v(T)=\boldsymbol{0}$, then $\v(t)=\boldsymbol{0}$ for all $t\in[0,T]$. We prove this result by a contradiction first in the interval $[\epsilon,T],$ for any $\epsilon>0$ and then by using  the continuity of $\|\u(t)\|_{\H}$ in $[0,T]$ and  the arbitrariness of $\epsilon$, one can obtain the required result in $[0,T]$. Assume that there exists some $t_0\in[\epsilon,T)$ such that $\v(t_0)\neq \boldsymbol{0}$. Since the mapping $t\mapsto\|\v(t)\|_{\H}$ is continuous, the following alternative holds: 
\begin{enumerate}
	\item [(i)] for all $t\in[t_0,T]$, $\|\v(t)\|_{\H}>0$ or
	\item [(ii)] there exists a $t_1\in(t_0,T)$ such that for all $t\in(t_0,t_1)$, $\|\v(t)\|_{\H}>0$ and $\v(t_1)=\boldsymbol{0}$. 
\end{enumerate}
In the second case, denoting by $\widehat{\Lambda}(t)$, the ratio 
\begin{align}\label{58}
	\widehat{\Lambda}(t)=\frac{\langle\widehat{\mathcal{A}}\v(t),\v(t)\rangle}{\|\v(t)\|_{\H}^2}=\frac{\mu\|\v(t)\|_{\V}^2+\left(\alpha+\frac{\sigma^2}{2}\right)\|\v(t)\|_{\H}^2}{\|\v(t)\|_{\H}^2}\geq \min\{\mu,\alpha\}\frac{\|\v(t)\|_{\V}^2}{\|\v(t)\|_{\H}^2}.
\end{align}
Calculations similar to \eqref{33} and \eqref{016} provide
\begin{align}\label{59}
	\frac{1}{2}\frac{\d\widehat{\Lambda}}{\d t}+\frac{\|\widehat{\mathcal{A}}\v-\widehat{\Lambda}\v\|_{\H}^2}{\|\v\|_{\H}^2}&=\frac{\langle\widehat{\mathcal{A}}\v-\widehat{\Lambda}\v,\widehat{h}(\v)\rangle}{\|\v\|_{\H}^2}\leq\frac{1}{2}\frac{\|\widehat{\mathcal{A}}\v-\widehat{\Lambda}\v\|_{\H}^2}{\|\v\|_{\H}^2}+\frac{1}{2}\frac{\|\widehat{h}(\v)\|_{\H}^2}{\|\v\|_{\H}^2}. 
\end{align} 
For $d=2$ and $r\in[1,\infty)$, one can use the estimates given in \eqref{018} and \eqref{47} to obtain an estimate for $\|\widehat{h}(\v)\|_{\H}^2$ . Thus, we consider the case $d=3$ and $r\in[3,5]$ ($2\beta\mu\geq 1$ for $d=3$). We estimate $\|\widehat{h}(\v)\|_{\H}^2$ using \eqref{019} and \eqref{47}  as 
\begin{align}\label{49}
	\|\widehat{h}(\v)\|_{\H}^2&\leq\frac{C}{[z(t)]^2}\left(\|\v_1\|_{\V}\|\v_1\|_{\H^2}+C\|\v_2\|_{\V}\|\v_2\|_{\H^2}\right)\|\v\|_{\V}^2\nonumber\\&\quad+\frac{C\beta}{[z(t)]^{2(r-1)}}\left(\|\v_1\|_{\wi\L^{r+1}}^{\frac{r-1}{2}}\|\v_1\|_{\wi\L^{3(r+1)}}^{\frac{3(r-1)}{2}}+\|\v_2\|_{\wi\L^{r+1}}^{\frac{r-1}{2}}\|\v_2\|_{\wi\L^{3(r+1)}}^{\frac{3(r-1)}{2}}\right)\|\v\|_{\V}^2.
\end{align}
Therefore, from \eqref{58} and \eqref{59}, we have 
\begin{align}
	\frac{\d\widehat{\Lambda}}{\d t}&\leq\frac{C}{\mu[z(t)]^2}\left(\|\v_1\|_{\V}\|\v_1\|_{\H^2}+C\|\v_2\|_{\V}\|\v_2\|_{\H^2}\right)\widehat{\Lambda}\nonumber\\&\quad+\frac{C\beta}{\mu[z(t)]^{2(r-1)}}\left(\|\v_1\|_{\wi\L^{r+1}}^{\frac{r-1}{2}}\|\v_1\|_{\wi\L^{3(r+1)}}^{\frac{3(r-1)}{2}}+\|\v_2\|_{\wi\L^{r+1}}^{\frac{r-1}{2}}\|\v_2\|_{\wi\L^{3(r+1)}}^{\frac{3(r-1)}{2}}\right)\widehat{\Lambda}. 
\end{align}
An application of the variation of constants formula yields 
\begin{align}\label{63}
	\widehat{\Lambda}(t)&\leq\widehat{\Lambda}(t_0)\exp\left[\frac{CT^{\frac{1}{2}}}{\mu}\sup\limits_{t\in[0,T]}e^{2\sigma\W(t)}\sum\limits_{i=1}^2\sup\limits_{t\in[t_0,T]}\|\v_i(t)\|_{\V}\left(\int_{t_0}^T\|\v_i(t)\|_{\H^2}^2\d t\right)^{\frac{1}{2}}\right]\nonumber\\&\quad\times\exp\left[\frac{C\beta T^{\frac{5-r}{2(r+1)}}}{\mu}\sup\limits_{t\in[0,T]}e^{2(r-1)\sigma\W(t)}\sum_{i=1}^2\sup\limits_{t\in[t_0,T]}\|\v_i(t)\|_{\V}^{\frac{r-1}{2}}\left(\int_{t_0}^T\|\v_i(t)\|_{\wi\L^{3(r+1)}}^{r+1}\d t\right)^{\frac{3(r-1)}{2(r+1)}}\right].
\end{align}
Using the estimate \eqref{56},  one can easily see that the right hand side of \eqref{63} is finite. 

On the other hand, we infer
\begin{align}\label{64}
	-\frac{1}{2\|\v(t)\|_{\H}^2}\frac{\d}{\d t}\|\v(t)\|_{\H}^2=-\frac{\langle\v(t),\partial_t\v(t)\rangle}{\|\v(t)\|_{\H}^2}=\widehat{\Lambda}(t)-\frac{\langle \widehat{h}(\v(t)),\v(t)\rangle}{\|\v(t)\|_{\H}^2}. 
\end{align}
For $d=2$, $r\in[1,\infty)$, we can use calculations similar to \eqref{43} and \eqref{48} to estimate $\frac{|\langle \widehat{h}(\v),\v\rangle|}{\|\v\|_{\H}^2}$. Thus, we consider the case $d=3$ and $r\in[3,5]$ ($2\beta\mu\geq 1$ for $d=3$).  Calculations similar to \eqref{43} and \eqref{48} yield 
\begin{align}\label{65}
	\frac{|\langle \widehat{h}(\v),\v\rangle|}{\|\v\|_{\H}^2}&\leq\frac{2^{1/2}\left(\|\v_1\|_{\wi\L^4}+\|\v_2\|_{\wi\L^4}\right)\|\v\|_{\V}^{7/4}}{\|\v\|_{\H}^{7/4}}+\frac{C}{\|\u\|_{\H}^{\frac{2r}{r+1}}}\left(\|\u_1\|_{\wi\L^{r+1}}^{r-1}+\|\u_2\|_{\wi\L^{r+1}}^{r-1}\right)\|\u\|_{\V}^{\frac{2r}{r+1}}\nonumber\\&\leq\widehat{\Lambda}+\frac{C}{\mu^7}\left(\|\v_1\|_{\wi\L^4}^8+\|\v_2\|_{\wi\L^4}^8\right)+\frac{C}{\mu^{r+1}}\left(\|\v_1\|_{\wi\L^{r+1}}^{(r+1)(r-1)}+\|\v_2\|_{\wi\L^{r+1}}^{(r+1)(r-1)}\right).
\end{align}
Therefore, using \eqref{65} in  \eqref{64}, we easily have 
\begin{align}\label{66}
	-&\frac{\d}{\d t}\log\|\v(t)\|_{\H}\nonumber\\&\leq 2\widehat{\Lambda}(t)+\frac{C}{\mu^7}\left(\|\v_1(t)\|_{\V}^8+\|\v_2(t)\|_{\V}^8\right)+\frac{C}{\mu^{r+1}}\left(\|\v_1(t)\|_{\V}^{(r+1)(r-1)}+\|\v_2(t)\|_{\V}^{(r+1)(r-1)}\right).
\end{align}
According to \eqref{63}, \eqref{55} and \eqref{56}, the right hand side of \eqref{66} is integrable on $(t_0,t_1)$ and this contradicts the fact that $\v(t_1)=\boldsymbol{0}$. Thus we are in the case (i) of the alternative and backward uniqueness result follows. 
\end{proof}

\begin{corollary}
	Under the assumption of Theorem \ref{thm4.1},  either  $\u$ vanishes identically or $\u$ 	never vanishes. 
\end{corollary}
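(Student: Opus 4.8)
The plan is to derive the stated dichotomy by combining the pathwise backward uniqueness of Theorem \ref{thm4.1} with a forward uniqueness statement, exactly as in the deterministic Corollary \ref{cor3.2}. Throughout, $\u$ denotes the difference $\u_1-\u_2$ of two solutions of \eqref{36} sharing the same forcing, and I work pathwise. Since the factor $z(t)=e^{-\sigma\W(t)}$ is strictly positive and continuous on $[0,T]$, $\P$-a.s., the transformed difference $\v(t)=z(t)\u(t)=\v_1(t)-\v_2(t)$ vanishes at a time $t$ if and only if $\u(t)$ does. Hence it suffices to prove the dichotomy for $\v$, which solves the random system appearing in the proof of Theorem \ref{thm4.1}.

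First I would record the forward uniqueness for $\v$. Taking the inner product of the equation for $\v=\v_1-\v_2$ with $\v$ and integrating from $t_0$ to $t\geq t_0$, the monotonicity \eqref{2.23} of $\mathcal{C}(\cdot)$ (applied to $\v_1,\v_2$ together with the positive weight $[z]^{-(r-1)}$) disposes of the damping term, while the convective contribution $\langle\B(\v,\v_2),\v\rangle/z$ is controlled by the very estimates \eqref{347}, \eqref{08} and \eqref{11} used in Step~1 of the proof of Theorem \ref{thm3.1}, now carrying the bounded pathwise coefficient $\sup_{t\in[0,T]}[z(t)]^{-1}$. An application of Gronwall's inequality then gives, for all $t\in[t_0,T]$,
\begin{align*}
	\|\v(t)\|_{\H}^2\leq\|\v(t_0)\|_{\H}^2\exp\left(C\int_{t_0}^T g(s)\d s\right),
\end{align*}
where $g$ is a pathwise-integrable function built from $\|\v_2\|_{\V}$ and $z$; in particular, $\v(t_0)=\boldsymbol{0}$ forces $\v(t)=\boldsymbol{0}$ for all $t\geq t_0$.

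Next I would invoke the backward direction. The argument in the proof of Theorem \ref{thm4.1} shows that whenever $\v(t_1)=\boldsymbol{0}$ with $\|\v(t)\|_{\H}>0$ on $(t_0,t_1)$, the Dirichlet-quotient bound \eqref{63} makes the right-hand side of \eqref{66} integrable up to $t_1$, contradicting $\v(t_1)=\boldsymbol{0}$; consequently $\v(t_0)=\boldsymbol{0}$ propagates backward to $\v(t)=\boldsymbol{0}$ for all $t\leq t_0$. Combining the two directions, if $\v$ vanishes at any single time $t^\ast\in[0,T]$ then it vanishes on all of $[0,T]$; the contrapositive is precisely that a not-identically-zero $\v$ never vanishes. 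Translating back through $z>0$ yields the claim for $\u$.

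The only genuine point to verify is the forward uniqueness in the stochastic setting, and I do not expect this to be a real obstacle: because $z$ is $\P$-a.s. continuous and strictly positive on the compact interval $[0,T]$, its reciprocal is pathwise bounded, so the deterministic energy/Gronwall scheme of Step~1 applies verbatim with the self-adjoint part replaced by $\widehat{\mathcal{A}}$. The backward direction is already supplied by Theorem \ref{thm4.1}, so the corollary follows by merely assembling the two monotone propagation properties of the zero set of $\|\v(\cdot)\|_{\H}$.
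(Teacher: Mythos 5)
Your proposal is correct and follows essentially the same route as the paper: the paper leaves this stochastic corollary without an explicit proof, but (as in the deterministic Corollary \ref{cor3.2}) the intended argument is precisely the combination of pathwise forward uniqueness of the transformed system \eqref{37} (which the paper asserts holds as in the deterministic case, since $z(t)=e^{-\sigma\W(t)}$ is a.s.\ continuous and strictly positive) with the backward uniqueness of Theorem \ref{thm4.1} applied on the subinterval ending at the vanishing time. Your additional care in checking that the weights $[z(t)]^{-1}$ and $[z(t)]^{-(r-1)}$ are pathwise bounded, so that the Step~1 energy/Gronwall scheme transfers verbatim, is exactly the detail the paper glosses over.
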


For $d=2,r\in[1,\infty)$ and $d=3,r\in[3,5]$ ($2\beta\mu\geq 1$ for $d=r=3$),  the following result is a direct consequence of the backward uniqueness result which can be proved in a similar way as in Theorem \ref{thm3.3}.

\begin{theorem}[Approximate controllability]\label{thm4.2}
	The space $\{\u^{\boldsymbol{x}}(T):\boldsymbol{x}\in\H\}$ is dense in $\H$, $\mathbb{P}$-a.s., where $\u^{\boldsymbol{x}}(\cdot)$ is the unique solution to the  system \eqref{36}. 
\end{theorem}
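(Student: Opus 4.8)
The plan is to mirror the proof of Theorem \ref{thm3.3} pathwise, transferring the deterministic controllability argument to the random dynamical system \eqref{37}. First I would observe that $\u^{\x}(T)=z(T)^{-1}\v^{\x}(T)$ with $z(T)^{-1}=e^{\sigma\W(T)}>0$ being, for $\P$-a.e.\ fixed $\omega$, a strictly positive scalar; multiplication by this scalar is a homeomorphism of $\H$, so $\{\u^{\x}(T):\x\in\H\}$ is dense in $\H$ if and only if $\{\v^{\x}(T):\x\in\H\}$ is. Thus it suffices to prove the density statement for the solution map of \eqref{37}, working for each fixed $\omega$ in a set of full probability on which $z(\cdot)\in\C([0,T];\R)$ and the pathwise regularity estimates \eqref{55}--\eqref{56} hold; since the exceptional set is $\P$-null, the $\P$-a.s.\ conclusion will follow automatically.

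Fixing such an $\omega$, I would define the pathwise semiflow $\S_{\omega}(T):\H\to\H$ by $\S_{\omega}(T)(\x)=\v^{\x}(T)$, where $\v^{\x}$ is the unique solution of \eqref{37}. As in Theorem \ref{thm3.3}, $\S_{\omega}(T)$ is Fr\'echet differentiable on $\H$, with derivative $\Gamma_{\omega}\y=D\S_{\omega}(T)(\x)\y=\w(T)$ given by the pathwise analogue of \eqref{556},
\begin{equation*}
	\frac{\d\w(t)}{\d t}+\mu\A\w(t)+\frac{1}{z(t)}\B'(\v(t))\w(t)+\left(\alpha+\frac{\sigma^2}{2}\right)\w(t)+\frac{\beta}{[z(t)]^{r-1}}\mathcal{C}'(\v(t))\w(t)=\boldsymbol{0},\quad\w(0)=\y,
\end{equation*}
whose well-posedness follows from a Faedo--Galerkin argument using the regularity of the base state $\v$ furnished by \eqref{55}--\eqref{56}. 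The adjoint $\Gamma_{\omega}^{*}\p=\z(0)$ is then described by the pathwise analogue of the backward dual equation \eqref{557},
\begin{equation*}
	-\frac{\d\z(t)}{\d t}+\mu\A\z(t)+\frac{1}{z(t)}(\B'(\v(t)))^{*}\z(t)+\left(\alpha+\frac{\sigma^2}{2}\right)\z(t)+\frac{\beta}{[z(t)]^{r-1}}\mathcal{C}'(\v(t))\z(t)=\boldsymbol{0},\quad\z(T)=\p,
\end{equation*}
which is again pathwise well-posed for every $\p\in\H$.

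The decisive step is the injectivity of $\Gamma_{\omega}^{*}$. If $\Gamma_{\omega}^{*}\p=\z(0)=\boldsymbol{0}$, then the pathwise backward uniqueness of Theorem \ref{thm4.1}, applied to the linear dual system above, forces $\z\equiv\boldsymbol{0}$ on $[0,T]$, whence $\p=\z(T)=\boldsymbol{0}$. With $\Gamma_{\omega}^{*}$ injective, Lemma \ref{lem3.1} yields that $\{\S_{\omega}(T)(\x):\x\in\H\}$ is dense in $\H$; equivalently, any $\u_1$ orthogonal to $\mathcal{R}(\Gamma_{\omega})$ is seen to vanish by solving the dual equation with terminal datum $\z(T)=\u_1$, deducing $\z(0)=\boldsymbol{0}$ exactly as in Theorem \ref{thm3.3}, and invoking backward uniqueness. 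Undoing the scaling by $z(T)$ gives the density of $\{\u^{\x}(T):\x\in\H\}$.

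The main obstacle I anticipate is not the abstract controllability machinery but verifying that the log-convexity backward uniqueness of Theorem \ref{thm4.1}, stated for the \emph{difference} of two nonlinear solutions, genuinely applies to the \emph{linear} dual system satisfied by $\z$. One must check that the ratio $\widehat{\Lambda}(t)=\langle\widehat{\mathcal{A}}\z,\z\rangle/\|\z\|_{\H}^2$ obeys the same differential inequality, which requires controlling the random prefactors $z(t)^{-1}$ and $z(t)^{-(r-1)}$ by $\sup_{t\in[0,T]}e^{c\sigma\W(t)}$ as in \eqref{63}, and bounding the analogues of \eqref{49} and \eqref{65} using the pathwise regularity \eqref{56} of the base state. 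This is precisely where the restriction $d=3$, $r\in[3,5]$ (with $2\beta\mu\ge1$ for $d=r=3$) enters, mirroring its role in Theorem \ref{thm4.1}, since the self-adjoint operator $\widehat{\mathcal{A}}$ sidesteps the higher time-derivative estimates that are unavailable in the stochastic setting.
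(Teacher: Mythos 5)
Your proposal is correct and follows essentially the same route the paper intends: the paper gives no separate proof of Theorem \ref{thm4.2}, stating only that it is a direct consequence of the pathwise backward uniqueness (Theorem \ref{thm4.1}) and is proved ``in a similar way as in Theorem \ref{thm3.3}'', which is exactly the pathwise transfer (via the homeomorphism $\u^{\x}(T)=e^{\sigma\W(T)}\v^{\x}(T)$, the linearized/dual systems, injectivity of $\Gamma_{\omega}^{*}$, and Lemma \ref{lem3.1}) that you carry out. Your closing remark---that the log-convexity backward uniqueness must actually be re-run for the \emph{linear} dual system with the random coefficients $z(t)^{-1}$, $z(t)^{-(r-1)}$, rather than quoted verbatim from Theorem \ref{thm4.1}---correctly identifies the one step the paper glosses over (in the deterministic case as well), and your outline of how to handle it is the right one.
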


 \medskip\noindent
{\bf Acknowledgments:} K. Kinra is funded by national funds through the FCT - Funda\c c\~ao para a Ci\^encia e a Tecnologia, I.P., under the scope of the project  UIDB/00297/2020 and UIDP/00297/ 2020 (Center for Mathematics and Applications). M. T. Mohan would  like to thank the Department of Science and Technology -Science and Engineering Research Board (DST-SERB), India for a Mathematical Research Impact Centric Support (MATRICS), File No.: MTR/2021/000066, grant. The author would also like to thank Prof. E. Zuazua, Friedrich-Alexander Universit\"at Erlangen–N\"urnberg for useful discussions.

\medskip\noindent	\textbf{Declarations:} 

\noindent 	\textbf{Ethical Approval:}   Not applicable 

%\noindent  \textbf{Competing interests: } The authors declare no competing interests. 

\noindent  \textbf{Conflict of interest: }On behalf of all authors, the corresponding author states that there is no conflict of interest.

\noindent 	\textbf{Authors' contributions: } All authors have contributed equally. 

\noindent 	\textbf{Funding: } FCT - Portugal, UIDB/00297/2020 and UIDP/00297/ 2020 (K. Kinra), DST-SERB, India, MTR/2021/000066 (M. T. Mohan).

\noindent 	\textbf{Availability of data and materials: } Not applicable.

\end{document}